\newcommand{\bauth}[1]{#1} 
\newcommand{\btitle}[1]{\emph{#1}} 
\newcommand{\bjou}[1]{#1}
\newcommand{\bvol}[1]{\textbf{#1}}
\newcommand{\bpage}[1]{#1} \newcommand{\byear}[1]{(#1)}
\newcommand{\bentry}[6]{\bauth{#1}, \btitle{#2}, \bjou{#3}, \bvol{#4} \byear{#6}, \bpage{#5}}
\newcommand{\bbook}[3]{\bauth{#1}, \btitle{#2} \byear{#3}}
\newcommand{\defeq}{\doteqdot} \newcommand{\naturals}{\mathbb{N}}
\newcommand{\integers}{\mathbb{Z}}
\newcommand{\reals}{\mathbb{R}} 
 \newcommand{\sone}{\mathbb{S}^1}
\newcommand{\len}{\vartheta} 
\newcommand{\um}{\underline{m}} \newcommand{\om}{\overline{m}}
\newcommand{\uM}{\underline{M}} \newcommand{\oM}{\overline{M}}
\newcommand{\uk}{\underline{k}} \newcommand{\ok}{\overline{k}}
\newcommand{\uK}{\underline{K}} \newcommand{\oK}{\overline{K}}
\newcommand{\A}{\mathscr{A}}
\newcommand{\An}[1]{\A^{#1}}
\newcommand{\Ank}[2]{\A^{#1}_{#2}}
\newcommand{\hot}{\textrm{h.o.t.}}
\newcommand{\leb}{\mathscr{L}}
\newcommand{\pr}{\mathbb{P}}
\newcommand{\de}[2]{\frac{\textup{d} #1}{\textup{d} #2}}
\newcommand{\deh}{\textup{d}}
\renewcommand{\dim}{\textrm{dim}}
\newcommand{\st}{\textrm{ s.t. }}
\newcommand{\bigo}[1]{\mathcal{O}\left(#1\right)}
\newcommand{\id}{\textrm{Id}}
\newcommand{\gs}{\tilde{\mathscr{G}}}
\newcommand{\escaping}{\mathscr{E}}
\newcommand{\fa}{\forall\,} \newcommand{\ex}{\exists\,}
\newcommand{\card}[1]{|#1|} \newcommand{\eps}{\varepsilon}
\newcommand{\ddph}{\ddot\phi}
\newcommand{\dddph}{\dddot\phi}
\newtheorem{theorem}{Theorem}[section]
\newtheorem{corollary}[theorem]{Corollary}
\newtheorem{lemma}[theorem]{Lemma}
\newtheorem{proposition}[theorem]{Proposition}
\theoremstyle{definition}
\newtheorem{definition}[theorem]{Definition}
\newtheorem{remark}{Remark}
\newtheorem*{theorema}{Theorem A}
\newtheorem*{theoremb}{Theorem B} 
\begin{document}
\title[Results in a model of Fermi acceleration]
{Stability and instability results in a model of Fermi acceleration} 
\author{Jacopo \surname{De Simoi}}
\email[]{jacopods@math.umd.edu}
\affiliation{University of Maryland, College Park, MD 20740, USA}
\keywords{Fermi Acceleration, Dimension Theory, Escaping orbits, Elliptic islands}
\begin{abstract}
We consider the static wall approximation to the dynamics of a particle bouncing on a periodically oscillating infinitely heavy plate while subject to a potential force. We assume the case of a potential given by a power of the particle's height and sinusoidal motions of the plate. We find that for powers smaller than 1 the set of escaping orbits has full Hausdorff dimension for all motions and obtain existence of elliptic island of period 2 for arbitrarily high energies for a full-measure set of motions. Moreover we obtain conditions on the potential to ensure that the total (Lebesgue) measure of elliptic islands of period 2 is either finite or infinite. 
\end{abstract}

\maketitle
\section{Introduction}
This work provides some results about the description of trajectories of a particle in a model that presents Fermi acceleration. This acceleration mechanism was originally proposed in 1949 by Enrico Fermi \cite{Enrico} to provide an explanation for presence in the universe of high energy particles known as cosmic rays. Such particles are believed to gain energy by consecutive interactions with irregularities in an otherwise stationary magnetic field. Na\"ively one would expect that a process of thermalization would occur, leading to a stationary motion of the particle itself. However, this reasoning turns out to be too simplistic and a more refined analysis shows that there is a definite probability of an average gain in energy.\\ 
In 1961, Ulam \cite{Ulam} suggested a simple Hamiltonian system to model such statistical acceleration behaviour. The model has been thereafter known as the \emph{Fermi-Ulam ping-pong model} and consists of a particle moving between two infinite walls that are performing an oscillatory motion; the particle changes its velocity only by elastic collisions with the moving walls and it is not subject to any other force. The main questions about this problem regarded the existence of trajectories with unbounded energy: such orbits can either be \emph{escaping}, i.e. such that the energy of the particle goes to infinity with time, or \emph{oscillating}, meaning that the $\limsup$ of energy is infinite while the $\liminf$ remains bounded. In 1977, KAM theory has been used to provide (\cite{Pust1}, \cite{Pust2}) the answers to such questions: for sufficiently smooth motions of the wall, all orbits are bounded, as there are invariant tori for high energies that prevent diffusion. It is interesting to note that the smoothness condition is not a mere technical issue as for less regular motions one can find (many) unbounded orbits; see for instance \cite{Zharnitsky}. \\
A variation on the same theme involves a single oscillating wall and a potential $U(x)=x^\alpha$ to bring the particle back to the wall. This model allows for more flexibility as we can vary the potential that acts on the particle.
The case of gravity potential ($\alpha=1$) has been investigated \cite{Pust1} and the study yielded the following result:
\begin{theorem}[Pustylnikov]
There is an open set of wall motions $\phi(t)$ (in the space of periodic analytic functions admitting an analytic continuation to a given strip $| \Im z |<\eps$) such that the measure of the \emph{escaping} orbits is infinite.
\end{theorem}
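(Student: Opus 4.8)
The plan is to exhibit the escaping set as an infinite union of KAM islands placed at arbitrarily high energies around a uniformly \emph{accelerating} periodic orbit. First I would write the Poincar\'e (collision) map in the static wall approximation. If the particle leaves the effectively fixed wall at phase $\tau$ with outgoing velocity $v>0$, then in the potential $U(x)=x$ it performs a ballistic flight of duration $2v$ and returns with velocity $-v$; an elastic bounce off the infinitely heavy wall (velocity $\dot\phi$) then gives the new outgoing velocity $v+2\dot\phi$. After normalising the period of $\phi$ and the gravitational constant to $1$ this is the map on $\sone\times\reals_{+}$
\[
F(\tau,v)=\bigl(\tau+2v,\;v+2\dot\phi(\tau+2v)\bigr),
\]
the first coordinate read modulo $1$. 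A one-line computation gives $\det DF\equiv 1$, so $F$ preserves Lebesgue measure, and an orbit is escaping iff $v_n\to\infty$. The structural fact I would exploit is that $F$ commutes with the vertical half-shift $S(\tau,v)=(\tau,v+\tfrac12)$ (because $2\cdot\tfrac12\in\integers$); writing $G:=S^{-1}\circ F$ we get $F=S\circ G$ with $G$ again area preserving and $S,G$ commuting, hence $F^{n}=S^{n}G^{n}$. A fixed point of $G$ is precisely an $F$-orbit with $v_n=v_0+\tfrac n2$, i.e.\ a uniformly accelerating orbit.

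Second, I would produce an \emph{elliptic} accelerating fixed point for a concrete motion. Since $G(\tau,v)=(\tau+2v,\,v+2\dot\phi(\tau+2v)-\tfrac12)$, its fixed point equations reduce to $2v\in\integers$ and $\dot\phi(\tau)=\tfrac14$, so I pick an analytic $1$-periodic $\phi_0$ and a phase $\bar\tau$ with $\dot\phi_0(\bar\tau)=\tfrac14$ and, crucially, $\ddot\phi_0(\bar\tau)\in(-1,0)$; then $p_0=(\bar\tau,\tfrac12)$ is a fixed point of $G_{\phi_0}$ and
\[
DG_{\phi_0}(p_0)=\begin{pmatrix}1&2\\ 2\ddot\phi_0(\bar\tau)&1+4\ddot\phi_0(\bar\tau)\end{pmatrix},\qquad \operatorname{tr}=2+4\ddot\phi_0(\bar\tau)\in(-2,2),\quad \det=1,
\]
so $p_0$ is elliptic with rotation number $\theta/2\pi$, $\cos\theta=1+2\ddot\phi_0(\bar\tau)$. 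Such $\phi_0$ manifestly exist (e.g.\ $A\sin 2\pi t$ with $A$ slightly larger than $\tfrac{1}{8\pi}$, evaluated near a small phase, with a higher harmonic added if needed).

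Third comes the KAM step: put $G_{\phi_0}$ near $p_0$ into Birkhoff normal form and invoke Moser's invariant curve theorem. This needs $\theta$ to avoid the resonances of order $\le 4$ and the first Birkhoff twist coefficient of $G_{\phi_0}$ at $p_0$ to be nonzero; both are open conditions, so after perturbing $\phi_0$ within the analytic class if necessary I may assume they hold. Ellipticity, non-resonance, non-vanishing twist, and---via the implicit function theorem applied to $\dot\phi(\tau)=\tfrac14$ near $\bar\tau$ (legitimate since $\ddot\phi_0(\bar\tau)\neq0$)---the very existence of the accelerating fixed point $p(\phi)$ of $G_\phi$ all persist, so they continue to hold for every $\phi$ in an open neighbourhood $\mathcal U$ of $\phi_0$ in the topology of uniform convergence on the strip $|\Im z|<\eps$; this $\mathcal U$ will be the open set of motions in the statement. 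For each $\phi\in\mathcal U$, Moser's theorem yields a $G_\phi$-invariant topological disk $\mathcal D=\mathcal D(\phi)$ around $p(\phi)$ bounded by an invariant curve, which may be taken of $v$-diameter $<\tfrac12$ and has positive Lebesgue measure $a>0$.

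Finally I would transfer the island back to $F$ and sum. For $x\in\mathcal D$ we have $G_\phi^{n}x\in\mathcal D$ for all $n$, hence $F_\phi^{n}x=S^{n}G_\phi^{n}x$ has velocity component $v_n=\tfrac n2+O(1)\to\infty$, so $\mathcal D$ consists of escaping orbits; moreover $\bigcup_{n\ge0}F_\phi^{n}(\mathcal D)=\bigcup_{n\ge0}\bigl(\mathcal D+(0,\tfrac n2)\bigr)\subseteq\escaping$, an $F_\phi$-invariant set whose pieces are pairwise disjoint (by the diameter bound), so $\escaping$ has Lebesgue measure $\ge\sum_{n\ge0}a=\infty$, which is the claim. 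The hard part is the quantitative third step: checking that the Birkhoff twist coefficient of $G_{\phi_0}$ at the accelerating fixed point is genuinely nonzero while simultaneously keeping $\theta$ off the low-order resonances, which is exactly what licenses Moser's theorem; once that is secured, the remainder is persistence of a non-degenerate elliptic fixed point plus bookkeeping with the half-shift symmetry.
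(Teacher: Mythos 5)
The paper does not actually prove this theorem: it is stated in the introduction as a citation to Pustylnikov \cite{Pust1}, so there is no in-paper proof to compare yours against. What you have written is, in outline, Pustylnikov's argument — accelerating periodic orbit via the vertical half-shift symmetry, ellipticity of the linearization, Moser's invariant-curve theorem around it, and then an infinite pile of disjoint translates of the resulting island — and it is correct \emph{for the static wall approximation}. The computations check out: $T(v)=2v$ for $\alpha=1$ makes $F\circ S=S\circ F$ exact, $\det DF\equiv 1$, $DG(p_0)$ has trace $2+4\ddot\phi_0(\bar\tau)$, and $F^n\mathcal D=\mathcal D+(0,n/2)$ gives disjoint pieces of equal positive measure whenever $\mathcal D$ has $v$-diameter $<1/2$.

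The place where your argument falls short of the statement as intended is precisely the step you take for free: you work with the approximated map \eqref{mainapprox}, in which the identity $F=S\circ G$ and $[F,S]=0$ are \emph{exact}. In the genuine bouncing-ball model, the flight time solves $\phi(\tau)+vT-\tfrac12 gT^2=\phi(\tau+T)$ rather than $T=2v$, and the half-shift symmetry is only asymptotic; the accelerating orbit is not an exact fixed point of any $S^{-1}F$, and the clean decomposition $F^n=S^nG^n$ does not hold. Pustylnikov's original proof must therefore build the accelerating orbit and the surrounding normal form by hand, treating the wall motion as a perturbation and controlling the error terms; that analytic work is where the theorem's difficulty lives and your proof does not touch it. You also leave the non-vanishing of the Birkhoff twist coefficient as an open condition to be arranged by an unspecified perturbation of $\phi_0$, which you honestly flag as ``the hard part''; openness alone does not give existence, so to be complete you would need either a direct computation for a concrete $\phi_0$ (as the paper does for its own Theorem B in section \ref{nondegeneracy}) or a genericity/density argument. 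In short: right mechanism, correctly executed for the approximated map, but not yet a proof of the quoted statement for the actual system, and with the twist verification outstanding.
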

The case of elastic potential ($\alpha=2$) has been studied as well in \cite{Ortega1}, \cite{Ortega2}; abundance of unbounded escaping orbits has been proved under some resonance condition between the motion of the wall and the potential.\\
In a more general setting we can again use KAM theory \cite{Dima} to prove the following result:
\begin{theorem}[Dolgopyat]
If $\alpha>1$ but $\alpha\not =2$ and the motion of the wall is smooth enough, then the set of escaping and oscillatory motions is empty as there is persistence of invariant tori for high energies.
\end{theorem}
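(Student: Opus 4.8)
The plan is to present the dynamics, in the static wall approximation, as an exact symplectic twist map of the cylinder and to apply Moser's invariant curve theorem on a sequence of energy shells escaping to infinity. First I would write the Poincar\'e (``bounce'') map $F\colon(t_n,v_n)\mapsto(t_{n+1},v_{n+1})$, where $t_n$ is the collision time modulo the period of $\phi$ and $v_n$ the outgoing speed: during the free flight the energy $E=\tfrac12 v^2+U(x)$ is conserved and the particle always returns to the wall (it sits in a potential well), so $t_{n+1}=t_n+T(v_n)$ where $T$ is the round-trip flight time in the frozen well, and the elastic collision contributes a kick $v_{n+1}=v_n+2\dot\phi(t_{n+1})$ (signs depending on orientation conventions). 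In suitable action--angle-type variables $F$ is exact symplectic, hence has the intersection property.

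Next I would record the flight time for $U(x)=x^\alpha$. A particle of speed $v$ rises to height $h$ with $h^\alpha=\tfrac12 v^2$, and $T(v)=2\int_0^h(v^2-2x^\alpha)^{-1/2}\,\deh x=c_\alpha\, v^{(2-\alpha)/\alpha}$ with $c_\alpha=2^{1-1/\alpha}\int_0^1(1-u^\alpha)^{-1/2}\,\deh u$. The decisive point is the twist: $T'(v)\neq0$ for large $v$ exactly when $(2-\alpha)/\alpha\neq0$, i.e. when $\alpha\neq2$ --- the excluded value being precisely the one for which the unperturbed return map has no twist at leading order and KAM degenerates, consistent with the unbounded orbits found in the resonant elastic case. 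Note, however, that $T'(v)\to0$ as $v\to\infty$ for every $\alpha>1$ (and, when $1<\alpha<2$, $T(v)\to\infty$ as well), so the twist is not uniform and the argument must be run shell by shell.

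I would then fix $E_k\to\infty$ (say $E_{k+1}=2E_k$) and on the annulus $\A_k=\{E_k\le E\le E_{k+1}\}$ rescale time and energy by $k$-dependent factors chosen so that $\A_k$ becomes a cylinder of unit width and the rescaled twist is bounded above and below uniformly in $k$. In these coordinates the integrable part is the rotation $(\theta,I)\mapsto(\theta+\omega(I),I)$ coming from $T$, and the genuine effect of the wall's motion --- a perturbation of relative size $\sim\dot\phi/v$ --- is $C^r$-small, of size $\delta_k\to0$; this is where the hypothesis that $\phi$ be smooth enough is used, since $r$ must lie above Moser's threshold while one controls how the (many, and for $\alpha<2$ rapidly growing) derivatives of $T$ and the large winding $T(v)\bmod1$ behave under the rescaling. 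Granting this, Moser's theorem yields, for all large $k$, a rotational invariant curve inside $\A_k$ that is a graph over the angle. Such curves accumulate at infinity, so every orbit lies below one of them from the start and is trapped in a bounded energy region; in particular no orbit is escaping or oscillatory, and the high-energy invariant tori give the asserted stability.

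The step I expect to be the main obstacle is the uniform closeness-to-integrable estimate after rescaling. Because the twist degenerates and (for $1<\alpha<2$) the flight time itself diverges, one must balance the widths of the shells $\A_k$ against the decay of $T'$ and against the growth of the high derivatives of $T$, so that the rescaled perturbation still has small $C^r$ norm with $r$ large; pinning down the shell sizes and rescaling exponents and then bookkeeping the derivatives of $T\bmod1$ and of $\dot\phi\circ(t+T(\cdot))$ is the technical core. Once that is in place, the invariant curve theorem and the trapping argument are routine.
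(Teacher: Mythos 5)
This theorem is imported from Dolgopyat's paper \cite{Dima} with a citation; the present paper gives no proof of it, so there is nothing in the source to compare your argument against line by line. What can be said is that your outline is the standard KAM route to boundedness and is almost certainly the mechanism behind Dolgopyat's result: exhibit the bounce map as an exact symplectic twist map, rescale on dyadic energy shells so the twist is uniformly non-degenerate, invoke Moser's invariant curve theorem to get rotational circles in each shell, and trap every orbit beneath one of them. Your flight-time computation $T(v)=c_\alpha v^{(2-\alpha)/\alpha}$ agrees with the paper's $T\sim v^\gamma$, $\gamma=2/\alpha-1$, and you correctly locate $\alpha=2$ as the degenerate twistless case and correctly identify that the rescaled perturbation parameter $\sim T^{1-1/\gamma}$ tends to zero at high energy on both sides of $\alpha=2$.

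Two things need attention. First, the theorem as stated in the paper (and as the surrounding discussion makes clear) concerns the \emph{exact} bouncing-ball system, whereas you set the argument up for the static wall approximation from the outset. The approximation error is itself of relative size $O(1/v)$, so in principle it rides along with the kick and should not change the conclusion, but your proposal never says so; as written, you have argued boundedness only for the model map \eqref{mainapprox}, not for the system the theorem is about. You would need to show that the true collision condition $x(t_{n+1})=\phi(t_{n+1})$ yields a map that differs from the frozen-wall map by a $C^r$-small term after the same shell rescaling, and that this combined perturbation still meets Moser's smallness threshold. Second, and more seriously for a complete proof, the step you yourself flag as the main obstacle --- the uniform $C^r$ estimate for the rescaled map, balancing the degeneration of $T'$, the growth of higher derivatives of $T$, and the large winding $T\bmod 1$ --- is precisely where the hypotheses ``$\alpha>1$, $\alpha\neq2$, $\phi$ smooth enough'' do their work, and you only describe the bookkeeping rather than carry it out. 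Until that estimate is pinned down, the argument remains a (plausible and well-oriented) sketch rather than a proof.
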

On the other hand, orbits with unbounded energy are allowed for weaker potentials. It is conjectured that for potentials weaker than gravity (i.e. for $\alpha<1$) the measure of escaping motions is zero. A first step towards this result is the following
\begin{theorem}[Dolgopyat]
If $\alpha<1/3$ and the motion of the wall is a sinusoid, then the set of escaping orbits has zero Lebesgue measure.
\end{theorem}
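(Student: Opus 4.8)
\emph{Sketch of proof.}
I would work with the collision map in coordinates $(t,v)$, where $t\in\sone=\reals/\integers$ is the phase of the wall at a collision and $v>0$ is the outgoing speed; in the static wall approximation collisions occur at the fixed wall position, where $U=0$, so the energy there is exactly $E=v^{2}/2$. The map is
\[
\mathscr{F}\colon (t,v)\longmapsto\bigl(\,t+\tau(v)\ \bmod 1\,,\ \ v+2\dot\phi\bigl(t+\tau(v)\bigr)\,\bigr),
\]
$\tau(v)$ being the time of flight in $U(x)=x^{\alpha}$ launched with speed $v$; a scaling computation gives $\tau(v)=c_{\alpha}v^{\gamma}$ with $\gamma=\frac{2-\alpha}{\alpha}$, so $\gamma>1\iff\alpha<1$ and $\gamma>5\iff\alpha<1/3$. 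One checks $\det D\mathscr{F}\equiv 1$, so $\mathscr{F}$ preserves $dt\,dv$. Three structural facts drive the argument: (i) the speed increment $v_{n+1}-v_{n}=2\dot\phi(t_{n+1})$ is \emph{bounded}, whereas the energy increment $E_{n+1}-E_{n}=2v_{n}\dot\phi(t_{n+1})+O(1)$ is only $O(v_{n})$; (ii) $\int_{\sone}\dot\phi\,dt=0$ since $\phi$ is periodic; (iii) the twist $\partial_{v}\bigl(t+\tau(v)\bigr)=c_{\alpha}\gamma\,v^{\gamma-1}$ is large at high energy when $\gamma>1$.

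\emph{Step 1: reduction to non-escape of $(v_{n})$.}
Let $\escaping=\{x:v_{n}(x)\to\infty\}$. Since $\mathscr{F}$ is an area-preserving bijection, a last-exit-time argument reduces $\leb(\escaping)=0$ to: for every large $R$ and a.e. $x$ with $v_{0}(x)\in(R,2R)$, the forward orbit returns to $\{v\le R\}$. Indeed, for an escaping orbit the last visit to each dyadic level $\{v\le 2^{j}R\}$ occurs at a finite time, so — increments being bounded — a forward shift of the orbit lands in a set of the form $\{v_{0}\in(R',2R'),\ v_{n}>R'\ \forall n\}$, which must then be $\leb$-null; and $\escaping$ is covered by countably many $\mathscr{F}^{-k}$-preimages of such sets. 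Thus it suffices that $(v_{n})$ not converge to $+\infty$ for a.e. start. The plan is to show that, conditioned on the past, $(v_{n})$ has bounded increments and asymptotically vanishing drift — i.e. it is a bounded-increment martingale up to a perturbation that is, in aggregate, of lower order — and to invoke the elementary fact that a bounded-increment martingale a.s. cannot converge to $+\infty$: it either converges to a finite limit, forcing $2\dot\phi(t_{n})\to 0$ (a null event by equidistribution), or it oscillates with $\limsup v_{n}=+\infty$. This leaves open the existence of oscillatory orbits, on which nothing is claimed.

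\emph{Step 2: the engine — equidistribution of phases.}
By (ii) the conditional drift of $(v_{n})$ equals $2\int\dot\phi\,d\mu_{n}$, with $\mu_{n}$ the conditional law of $t_{n}$, so it is small exactly to the extent that $\mu_{n}$ is close to Lebesgue measure; this is where (iii) enters. Writing $v_{k}=R+w_{k}$ along a block of length $M$ with $|w_{k}|\lesssim\sqrt{M}$, the phase $t_{n}=t_{0}+c_{\alpha}R^{\gamma}n+c_{\alpha}\gamma R^{\gamma-1}\sum_{k<n}w_{k}+\hot$ carries a large, fluctuating second difference of order $R^{\gamma-1}$, which produces van der Corput type cancellation in the exponential sums $\sum_{n}e^{2\pi i t_{n}}$; for a sinusoidal wall only the single Fourier mode $e^{\pm 2\pi i t}$ occurs, which keeps these sums tractable. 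The exceptional set where $\tau(v)\bmod 1$ is anomalously small (no cancellation) is handled by a non-stationary-phase argument: since $\tau'(v)=c_{\alpha}\gamma v^{\gamma-1}$ is large, the orbit sweeps through any resonant $v$-window in a negligible fraction of the time, and area preservation together with the small size of those windows bounds the measure of initial conditions with atypically long sojourns near resonances.

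\emph{Step 3: the main obstacle, and the role of $1/3$.}
The decisive difficulty is making Step 2 quantitative enough. A mean-zero, $O(1)$-step walk needs $\asymp R^{2}$ iterations to drift across $\{R<v<2R\}$, and to escape it must cross all dyadic annuli $\{2^{j}R<v<2^{j+1}R\}$, so the accumulated conditional drift over those iterations must be $o$ of the diffusive displacement $\asymp R$ on each annulus — and summably so over $j$ — for the martingale comparison of Step 1 to close. This forces the equidistribution error for $(t_{n})$ to be $o(R^{-1})$ once the resonant set is excised, uniformly across scales; tracking the gain furnished by the fluctuating second difference through the van der Corput estimates shows that this rate is achievable precisely when $\gamma>5$, i.e. $\alpha<1/3$ — hence the hypothesis, and closing the gap to the conjectural $\alpha<1$ would require equidistribution estimates sharper than the second difference alone supplies. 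What remains is bookkeeping: the last-exit decomposition, the scale-by-scale comparison of $(v_{n})$ with a martingale, and checking that the higher-order corrections in $\tau(v_{k})=c_{\alpha}(R+w_{k})^{\gamma}$ do not swamp the estimates.
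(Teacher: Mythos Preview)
This theorem is cited from \cite{Dima} and is not proved in the present paper; the paper supplies only a one-sentence outline (in the paragraph following the statements of the three quoted theorems). That outline is structurally unlike yours: one defines a \emph{critical set} $C$ on which uniform hyperbolicity of $\deh F$ fails; outside $C$ the dynamics is strongly hyperbolic, and one shows that a.e.\ orbit enters $C$; Poincar\'e recurrence for the first-return map to $C$ then gives that a.e.\ point of $C$ returns infinitely often, hence cannot escape. The hypothesis $\alpha<1/3$ enters solely as the condition under which $C$ has \emph{finite} invariant mass, so that Poincar\'e recurrence applies; the paper explicitly notes that $C$ has infinite measure for $\alpha>1/3$, which is why the argument breaks down there. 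No equidistribution estimate and no martingale comparison are involved.

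Your route --- approximate $(v_n)$ by a bounded-increment martingale via equidistribution of the phases --- is a genuinely different strategy, and as written it has two real gaps. First, the probabilistic language is not grounded: for the deterministic map there is no filtration under which ``conditional drift'' or ``the conditional law of $t_n$'' makes sense, since conditioning on the past determines the future exactly. What you presumably intend is an average over a small cell of initial conditions, but then the soft fact ``a bounded-increment martingale a.s.\ cannot tend to $+\infty$'' is no longer available; you would instead need a quantitative bound on the deterministic sums $\sum_k 2\dot\phi(t_k)$ uniformly over such cells, together with a Borel--Cantelli argument across dyadic scales, and none of this is supplied. Second, and more seriously, the threshold $\gamma>5$ in Step~3 is asserted rather than derived: you never state what the van~der~Corput inequality actually gives for $\sum_n e^{2\pi i t_n}$ in terms of $R$ and the block length, nor perform the balance against the $\asymp R^2$ iterations and the $\asymp R$ target displacement that would single out $\gamma>5$. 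Since this balance is precisely where the hypothesis $\alpha<1/3$ must enter, the sketch does not yet explain \emph{why} the result holds in that range and not beyond. In the recurrence approach that the paper describes, by contrast, the threshold drops out of a single measure computation, which is why that argument is cleaner here.
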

The above results leave several open questions regarding the largeness of the following sets:
\begin{itemize}
\item The escaping set $\escaping$ i.e. the set of orbits such that energy tends to infinity as time grows;
\item The set of orbits with bounded energy;
\item The oscillatory set i.e. the set of orbits such that $\limsup E(t)=+\infty$ and $\liminf E(t)<\infty$. 
\end{itemize}
In this paper we consider a static wall approximation to the bouncing ball system. This approximation, described in more detail in section \ref{approx}, is widely used in physics literature. It has the advantage of being given by more simple and explicit formulae whereas keeping all essential geometrical features of the complete model. 
The first result of this paper is the following 
\begin{theorema}
If $\alpha<1$ and the motion of the plate is a sinusoid, then the escaping set $\escaping$ has full Hausdorff dimension.
\end{theorema}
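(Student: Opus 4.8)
The plan is to reduce the static-wall dynamics to a monotone twist map of the cylinder whose twist is \emph{unbounded} when $\alpha<1$, and then to build, for every $\eta>0$, a Cantor subset of $\escaping$ of Hausdorff dimension at least $2-\eta$; since $\escaping$ sits in a two-dimensional phase space this forces $\dim_H\escaping=2$.

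\emph{Reduction.} Record a collision by $(t,v)$: the phase of the (statically placed) wall and the outgoing speed. Energy conservation in the potential $U(x)=x^\alpha$ between consecutive collisions, together with the elastic reflection off a wall of velocity $\dot\phi(t)$, gives the return map
\[
F(t,v)=\bigl(t+T(v),\,v+\Psi\bigl(t+T(v)\bigr)\bigr),\qquad \Psi:=2\dot\phi ,
\]
where $T(v)=2\int_0^{x_*(v)}\bigl(v^2-2x^\alpha\bigr)^{-1/2}\,dx$ with $x_*(v)^\alpha=v^2/2$ is the time of flight. The substitution $x=x_*(v)s$ yields $T(v)=c_\alpha\,v^{(2-\alpha)/\alpha}\bigl(1+\hot\bigr)$, and the exponent $(2-\alpha)/\alpha$ exceeds $1$ exactly because $\alpha<1$, so $T'(v)\to\infty$: $F$ is a twist map whose twist diverges at high energy. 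Because $\phi$ is a sinusoid, $\Psi$ is bounded, real-analytic and of zero mean, the arc $A_c:=\{\Psi\ge c\}$ is nonempty for small $c>0$, and on the sub-arc $A_c':=\{\Psi\ge c,\ \Psi'\ge c\}$ one also controls $|\Psi'|$ from below. I shall use the elementary consequences of the power law $T''/(T')^{2}\to0$ and $T'(v+\bigo{1})/T'(v)\to1$.

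\emph{A branching construction.} Fix $c$ small and a rectangle $R_0$ based at a high energy $V_0$ with $t$-projection in $A_c$. The inductive invariant will be: the survivors after $n$ steps form a disjoint family of thin curvilinear strips, each carried by $F^{n}$ — with distortion bounded uniformly in $n$ — onto a $\tau_n$-neighbourhood of a graph $v=g(t)$ with $g'$ bounded above and below, $t$ ranging over an arc of length of order $1$ inside $A_c$, and total variation of $v$ on the strip of order $1$ while $\tau_n\to0$. Applying $F$ once more, the divergence of the twist together with the order-$1$ vertical extent forces $F(\text{strip})$ to wrap around the cylinder $\asymp T'(V_n)$ times; intersecting with $A_c\times\reals$ keeps a definite fraction and cuts it into $m_n\asymp T'(V_n)$ new strips of the same type, along each of which $v$ has grown by at least $c$. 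A branch that obeys this forever carries $v_k\ge V_0+ck\to\infty$, hence lies in $\escaping$ (escaping, not merely oscillating). The bookkeeping to be done: check that at each stage the new strips are mutually separated, of comparable size, and of controlled eccentricity, that $m_n\to\infty$, and that the Lebesgue-measure loss per stage is bounded below by a constant $\kappa\in(0,1)$ depending only on $c$ and the distortion bounds.

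\emph{Dimension and main obstacle.} The resulting set is a Moran-type Cantor set with $N_n=\prod_{k<n}m_k$ components at stage $n$. The crucial feature is $\tfrac1n\log N_n=\tfrac1n\sum_{k<n}\log m_k\to\infty$, forced by $V_k\to\infty\Rightarrow T'(V_k)\to\infty$; this term therefore swamps every error of type $(\mathrm{const})^{n}$ arising from distortion and from the $\kappa$-loss. Pushing this through the mass-distribution principle — mass spread uniformly over branches, diameters of stage-$n$ components in $R_0$ estimated from the symplectic area form and from the expansion rates $\asymp\prod_{k<n}T'(V_k)$ — gives $\dim_H\ge2-\eta$; letting $\eta\to0$, and using $\dim_H\le2$ trivially, completes the proof. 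The hard part is precisely the geometric control in the inductive step: one must verify that the strongly twisting area-preserving map generates, stage after stage, strips whose shapes are regular enough that the Frostman estimate returns the \emph{full} exponent $2$ rather than something smaller — this is where $T''/(T')^{2}\to0$ and the lower bound on $\Psi'$ along $A_c'$ enter, and where one must choose carefully how to partition each wrapped image.
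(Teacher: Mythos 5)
Your route is genuinely different in its surface form from the paper's, but on closer inspection the gap you flag yourself is where the two arguments diverge, and it is the bulk of the work. The paper fixes a curve $\Gamma=\{v=C+2\dot\phi(t)\}$ lying in the unstable cone field on the hyperbolic region $B_a=\{|\ddot\phi|>a\}$, proves $\dim_H(\escaping\cap\Gamma)=1$ by reducing to a sequence of expanding circle maps $f_n$ with diverging rates $\um_n,\om_n\nearrow\infty$ and bounded distortion $\om_n<C\um_n$ (the running Hausdorff dimension lemma), and then lifts to $\dim_H\escaping=2$ by transversality to the stable direction (stable curves of escaping points stay in $\escaping$, and a Lipschitz family of curves over a $1$-dimensional base is $2$-dimensional). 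Your leading mechanism is the same one — $T'(v)\to\infty$ along escaping orbits makes the branching $m_n$ diverge, so the $n$-dependent corrections $\eps_n\sim n/\log\prod_k m_k\to 0$ — and your exponent $T\sim v^{(2-\alpha)/\alpha}$ matches the paper's $\gamma=2/\alpha-1$. You have the correct set $A_c'=\{\Psi\ge c,\ \Psi'\ge c\}$, which in the paper's notation is exactly $A_\eps\cap B_a$.

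The gap is in the Frostman step for your two-dimensional strips. Track the areas: a survivor $P\subset R_0$ is carried by $F^n$ to a strip of length $\sim1$ and thickness $\tau_n$, and since $F$ preserves area and the cumulative expansion along the unstable direction is $\Lambda_n\asymp\prod_{k\le n}T'(V_k)$, the preimage $P$ has size $\sim\Lambda_n^{-1}$ along the unstable direction but $\sim\tau_n\Lambda_n\sim\tau_0$ — not shrinking — along the stable direction. So your "Moran family" in $R_0$ is really a $1$-dimensional Cantor set in the unstable direction crossed with a transverse family of essentially full-length stable curves; it is not a collection of shrinking boxes, and the naive estimate $\mu(B(x,r))\lesssim r^{2-\eta}$ from spreading mass evenly over branches does not follow from strip counting alone. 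To make it work you must either (a) show the strips organize as a Lipschitz product of the $1$D unstable Cantor set with a $1$D stable lamination and apply the dimension-of-products argument, which is precisely the paper's transversality step, or (b) also subdivide transversally and run a second coupled Cantor induction in the stable direction with its own distortion control. Either way the inductive geometric estimates — invariance of the unstable cone field for $v$ large (the paper's hyperbolicity lemma on $B_a$), separation and comparability of sibling strips, and the two-directional mass bound — are exactly what is missing, and they constitute most of the proof rather than "bookkeeping." The mechanism you identify is right; the proposal is an outline of the correct strategy rather than a proof.
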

As a matter of fact we still ignore what should be the measure of the escaping set $\escaping$ in the case $1/3<\alpha<1$. The proof for the $\alpha<1/3$ case proceeds by defining a so-called \emph{critical set} $C$ of finite mass for some invariant measure, and such that almost every orbit passes infinitely many times through $C$; applying Poincar\'e recurrence theorem to the first return map to $C$ gives the desired result. The key ingredient of the proof is that orbits outside $C$ are strongly hyperbolic. The set $C$ has infinite measure for $\alpha>1/3$, therefore we aim to better understand dynamics on $C$ in order to obtain a smaller set $\tilde{C}$ of finite measure with the same recurrence property. As a first step we study elliptic islands inside $C$; more exactly we establish some results about the abundance of elliptic islands for high energies:
\begin{theoremb}
Let $\alpha<1$. Then in the static wall approximation, for \emph{almost all} sinusoidal motions there are elliptic islands of period 2 for arbitrarily high energies. Moreover if $2/3<\alpha<1$ we can prove the same result for \emph{all} sinusoidal motions.
\end{theoremb}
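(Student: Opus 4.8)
The plan is to pass to the static--wall return map, reduce the period-2 condition to a single scalar equation, read off ellipticity from the linearization, and convert the outcome into a Diophantine condition on the amplitude of the sinusoid whose solvability --- for \emph{all}, respectively \emph{almost every}, amplitude --- is governed precisely by the threshold $\alpha=2/3$. In coordinates $(t,v)$, with $t\in\mathbb{R}/\mathbb{Z}$ the collision time and $v>0$ the outgoing speed, the substitution $x=v^{2/\alpha}u$ in the half-period integral $\int(E-x^{\alpha})^{-1/2}\,dx$ gives the flight time $\tau(v)=Cv^{2\beta}$ with $\beta=\tfrac1\alpha-\tfrac12>\tfrac12$, and the collision rule gives $\mathcal{F}(t,v)=\bigl(t+\tau(v)\bmod 1,\ v+\psi(t+\tau(v))\bigr)$ with $\psi(t)=A\cos 2\pi t$, $A$ proportional to the amplitude $\eps$; this is an area-preserving twist map whose twist $\tau'(v)=2\beta Cv^{2\beta-1}$ is unbounded exactly because $\alpha<1$. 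Writing $\mathcal{F}^2(t_0,v_0)=(t_0,v_0)$, with $t_1=t_0+\tau(v_0)$ and $v_1=v_0+\psi(t_1)$, the fixed-point conditions split into $\tau(v_0)+\tau(v_1)=N\in\mathbb{N}$ and $\psi(t_0)+\psi(t_1)=0$; the non-degenerate branch of the second forces $t_1\equiv\tfrac12-t_0$, hence $t_0\equiv\tfrac14-\tfrac12\tau(v_0)\pmod{\tfrac12}$ and $v_1=v_0\mp A\sin\pi\tau(v_0)$, so the whole system reduces to the single equation
\[
g_\eps(v_0)\ :=\ \tau(v_0)+\tau\bigl(v_0\mp A\sin\pi\tau(v_0)\bigr)\ =\ N.
\]
Since $g_\eps(v)=2\tau(v)\mp A\tau'(v)\sin\pi\tau(v)+O(A^2\tau'')$ tends to $+\infty$ with oscillations of diverging amplitude $A\tau'(v)$, it attains every large value $N$ (in fact many times), so period-2 orbits exist at arbitrarily high energy $v_0^2/2$; the content of the theorem is to make one of them elliptic.

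A direct computation gives $D\mathcal{F}=\bigl(\begin{smallmatrix}1 & \tau'(v)\\ \psi'(t') & 1+\psi'(t')\tau'(v)\end{smallmatrix}\bigr)$, so along a period-2 orbit
\[
\mathrm{tr}\,D\mathcal{F}^2\ =\ 2+(a_0+a_1)(b_0+b_1)+a_0a_1b_0b_1,\qquad a_i=\tau'(v_i),\ b_i=\psi'(t_i).
\]
On the above branch one checks $b_0=b_1=b=\psi'(t_0)=\mp 2\pi A\cos\pi\tau(v_0)$, while $a_0\approx a_1=a=\tau'(v_0)$, whence $\mathrm{tr}\,D\mathcal{F}^2=(ab+2)^2-2+o(1)$: the orbit is elliptic iff $ab\in(-4,0)$, i.e.\ iff $\lvert\cos\pi\tau(v_0)\rvert\lesssim (Aa)^{-1}\sim\eps^{-1}v_0^{1-2\beta}$ (the correct sign of $b$ being secured by the choice of $t_0$ among its two preimages). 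Hence an elliptic period-2 orbit exists near the $k$-th resonance $v_k:=\bigl((k+\tfrac12)/C\bigr)^{1/2\beta}$, where $\tau(v_k)=k+\tfrac12$, provided $g_\eps$ takes an integer value on the short interval $I_k$ about $v_k$ on which $ab\in(-4,0)$. Since $I_k$ has length $\asymp v_k^{2-4\beta}$ and $g_\eps$ varies by only $\asymp\eps^{-1}k^{-\gamma}<1$ across it, linearizing $g_\eps$ on $I_k$ shows this amounts to the Diophantine condition
\[
\bigl\|K\eps(k+\tfrac12)^{\gamma}\bigr\|\ \lesssim\ \eps^{-1}k^{-\gamma},\qquad \gamma:=1-\tfrac1{2\beta}\in(0,1),
\]
with $\|\cdot\|$ the distance to $\mathbb{Z}$ and $K>0$ explicit; we need it for infinitely many $k$.

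Now the dichotomy. The sequence $u_k:=K\eps(k+\tfrac12)^{\gamma}$ increases to $+\infty$ with gaps $\asymp\eps k^{\gamma-1}$, while the admissible window has width $\asymp\eps^{-1}k^{-\gamma}$. If $2/3<\alpha<1$ then $\beta<1$, hence $\gamma<\tfrac12$, so the window eventually exceeds the gap; since $u_k$ then steps across every integer in steps smaller than the window, $\|u_k\|$ lies below the window for infinitely many $k$ --- and this holds for \emph{every} $\eps>0$, proving the second assertion. In general one has only $\gamma<1$, so $\sum_k\eps^{-1}k^{-\gamma}=\infty$; an elementary sweep estimate gives that the resonant set $\{\eps\in[\eps_1,\eps_2]:\|K\eps(k+\tfrac12)^{\gamma}\|\le c\eps^{-1}k^{-\gamma}\}$ has measure $\asymp k^{-\gamma}$, these sets are quasi-independent (their pairwise intersections obey the expected product bound, via a Fourier expansion or by bounding $|u_j\pm u_k|$ from below), and the divergence Borel--Cantelli lemma yields: for almost every amplitude, infinitely many $k$ are admissible --- hence for a.e.\ $\eps>0$ by exhausting $(0,\infty)$. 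This proves the first assertion.

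Finally, each admissible $k$ gives an elliptic period-2 point with $\mathrm{tr}\,D\mathcal{F}^2=2\cos(2\pi\rho_k)$, and to obtain a genuine \emph{island} one applies Moser's invariant-curve theorem to $\mathcal{F}^2$: one needs $\rho_k\notin\{0,\tfrac14,\tfrac13,\tfrac12\}$, that is, $ab$ avoiding the five exceptional values in $(-4,0)$ --- which can be arranged by restricting the admissible $k$ (or by a further full-measure condition on $\eps$) --- together with nonvanishing of the first Birkhoff normal-form coefficient of $\mathcal{F}^2$, a routine but lengthy computation that for large $k$ should be dominated by a fixed-sign term coming from $\tau''$ and the large twist $a$, hence nonzero. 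Granting this, each admissible $k$ produces a period-2 island at energy $\asymp v_k^2\to\infty$, which is Theorem~B. The conceptual crux --- and the reason for the split at $\alpha=2/3$ --- is that the integrality condition $g_\eps(v_0)=N$ and the ellipticity window $ab\in(-4,0)$ must hold simultaneously, and the window shrinks like $\eps^{-1}v_0^{1-2\beta}$, beating the spacing of resonances only when $\gamma<\tfrac12$; the technically heaviest points are the quasi-independence needed for the Borel--Cantelli step in the general case and the nonvanishing of the Birkhoff twist coefficient.
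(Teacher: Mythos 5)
Your proposal follows essentially the same route as the paper: find period-2 orbits via the symmetry of the sinusoid (the paper systematizes this through the reversor $R(t,v)=(2t_0-t-T(v),v)$ and its fixed locus $\ell$, whereas you solve the fixed-point equations directly, but both yield the same two families, the paper's $(+)$- and $(-)$-orbits); read ellipticity off $\mathrm{tr}\,DF^2$ as an interval condition on $\nu=\ddot\phi T'$ (your $ab\in(-4,0)$ is the symmetric-limit form of the paper's Proposition \ref{therecanbeonlytwo}, though you should also exclude $ab=-2$); convert this plus the integer-period constraint into the Diophantine condition $(n^{1/\gamma}+a)^\gamma\in(m+C_1a^{-1}m^{-\xi},m+C_2a^{-1}m^{-\xi})$ with $\xi=1-1/\gamma$; then use a Borel--Cantelli argument for a.e.\ amplitude and a window-vs.-gap (``overlapping'') argument when $\xi<1/\gamma$, i.e.\ $\gamma<2$, i.e.\ $\alpha>2/3$; and finish with a Birkhoff non-degeneracy check. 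You also correctly name the two technically heavy steps. This is the right skeleton, and the threshold $\alpha=2/3$ comes out for the right reason.

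Two caveats. First, asserting quasi-independence ``via a Fourier expansion or by bounding $|u_j\pm u_k|$'' does not discharge the divergent Borel--Cantelli hypothesis; this is where most of the actual work in the paper's arithmetic section lies, and it is done there by the Erd\H{o}s--R\'enyi criterion together with a careful grouping of the resonance intervals $\Ank{n}{k}$ into ``waves'' $B^{nk}_p$ and a summation over $n,k,p$ that produces the exact $\frac{1}{(1-\xi)^2}N^{2-2\xi}$ growth matching $\bigl(\sum\pr(\An{n})\bigr)^2$. (Also note the linearization $u_k=K\eps(k+\frac12)^{\xi}$ discards a correction $\sim a^2n^{2\xi-1}$ that is not negligible once $\xi>1/2$, so for the a.e.\ part one must, as the paper does, carry the exact expression $(n^{1/\gamma}+a)^\gamma$.) Second, your heuristic for the Birkhoff coefficient is in fact pointing the wrong way: the paper expands $\omega$ in powers of $T'$ and finds that $T''$, higher derivatives of $T$, and $\phi^{(4)}$ enter only at lower order, while the dominant coefficient $\omega_3$ of order $T'^3$ is proportional to $\dddot\phi^2$ times an explicit polynomial in $\nu=\ddot\phi T'$ (with the two cotangent terms from the resonance-avoiding multiplier) whose sign is controlled on $\nu\in(-1,0)$. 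So the fixed-sign contribution comes from $\dddot\phi$ and the large twist, not from $\tau''$; a computation organized around $\tau''$ would miss the leading order.
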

As a byproduct of the construction involved in the proof of Theorem B we obtain conditions on $\alpha$ for which the total measure of elliptic islands of fixed period 2 is either infinite or finite.
We remark that Theorem A extends without difficulties to the non-approximated bouncing balls system as we will briefly explain at the end of section \ref{hausdorffDimension}. We also expect Theorem B to remain valid for the exact bouncing ball system, but the computations needed to verify this statement are quite cumbersome. Questions regarding existence and abundance of oscillatory orbits are still completely open.
\section{The model and main approximation}
\label{approx}
We consider the problem of a point mass bouncing vertically on an infinitely heavy horizontal plate which oscillates with period 1 in the vertical direction and interacts with the particle by the law of elastic reflection. The particle is moving in a potential $U(x)=x^\alpha$, where $x$ is the vertical position and $\alpha$ is some positive real number.\\
Let $\phi(t)$ be the vertical position of the plate at time $t$, periodic of period 1; for simplicity we will consider the case:
\begin{equation}
\phi(t)=B+\frac{A}{2\pi}\sin(2\pi t).\qquad B>\frac{A}{2\pi}
\label{defphi} 
\end{equation}
It is natural to associate to the system a discrete time map defined as follows. Let $t_n$ be the time of the $n+1$st collision between the plate and the particle and $v_n$ its velocity (pointing upwards) immediately after the collision. 
As the position of the plate is a 1-periodic function of time $t$ we can consider $t_n$ on $\sone=\reals/\integers$. In this way the phase space is a half cylinder, as the velocity immediately after a collision  at time $t$ has to satisfy the inequality $v\geq\dot\phi(t)$.\\ 
An approximation common to this kind of problems is the so-called \emph{static wall approximation}, in which one considers the plate fixed at position $x_0$ but exchanging momentum with the particle as if it were moving. Notice that since the displacement of the plate is a bounded quantity, it affects the map by terms of order at most $1/v$, i.e. the accuracy of the approximation increases for high energies.\\
Define $T(v):\reals^+\rightarrow\reals^+$ as the time taken by a ball leaving $x=x_0$ with upward velocity $v$ to return on $x=x_0$ subject to the potential $U(x)$. In our case $T(v)\sim v^\gamma$, where it is easy to check that $\gamma=2/\alpha-1$. In fact, let $E_0$ be the energy of the particle after the collision:
\[
T=2\int_{x_0}^{x_{\textrm{max}}}\frac{1}{v(x)}\deh x=\int_{x_0}^{x_{\textrm{max}}}\frac{1}{\sqrt{E_0-x^\alpha}}\deh x\quad x_{\textrm{max}}=E_0^{1/\alpha}.
\]
Performing the change of variable $x=x_0^{1/\alpha}y$ we obtain:
\[
T=\int_{x_0 E_0^{-1/\alpha}}^1\frac{1}{E_0^{1/2}\sqrt{1-y^\alpha}}E_0^{1/\alpha}\deh y= E_0^{\frac{1}{\alpha}-\frac{1}{2}} \textrm{Const}\cdot\left(1+\mathcal{O}\left(\frac{x_0^\alpha}{E_0}\right)\right)\sim v^{\frac{2}{\alpha}-1}.
\]
Notice that the asymptotic expression is exact for $x_0=0$ or $\alpha=1$.\\
We will study the system in the static wall approximation and considering $T=C\cdot v^\gamma$. In this setting the map $F:(t_n,v_n)\mapsto(t_{n+1},v_{n+1})$ can be written as:
\begin{equation}
F:\left\{\begin{array}{l}t\\v\end{array}\right.\mapsto\begin{array}{l}t+T(v)\\v+2\dot\phi\left(t+T(v)\right)\end{array}\quad T(v)=C\cdot v^\gamma
\label{mainapprox}
\end{equation}
Note that if we consider $T(v)$ rather than $v$ as the fundamental variable we obtain:
\[T\mapsto C\left(\left(\frac{T}{C}\right)^{1/\gamma}+2\dot\phi\left(t+T\right)\right)^\gamma\sim T+T^{1-1/\gamma}\,C'\,\dot\phi\left(t+T\right)+\hot
\]
This map resembles the standard map, with the perturbation parameter $k=T^{1-1/\gamma}C'$ depending on $v$. We can distinguish between the following cases:
\begin{itemize}
\item \emph{weak potentials}  $0<\alpha<1 \leftrightarrow \gamma>1$: the perturbation parameter grows as velocity grows; we expect diffusion to high energies;
\item \emph{gravity} $\alpha=1 \leftrightarrow \gamma=1$: the system is equivalent to the standard map (unfolded on a semi-cylinder along $v$);
\item \emph{strong potentials} $1<\alpha<2 \leftrightarrow 0<\gamma<1$: the perturbation parameter decreases as we increase energy; there is persistence of invariant tori for high energies and therefore we can not have diffusion to arbitrarily high energies;
\item \emph{elastic potential} $\alpha=2 \leftrightarrow \gamma=0$: the period $T$ is independent of $v$, this is an exceptional case;
\item \emph{strong potentials (II)} $\alpha>2 \leftrightarrow -1<\gamma<0$: high energies correspond to small $T$, therefore we recover once more invariant tori bounding energies from above;
\item \emph{Fermi-Ulam ping pong} $\alpha\to\infty \leftrightarrow \gamma=-1$: limit case of strong potentials; once more high energies are bounded by invariant tori. Notice that in this case the static wall approximation is not anymore a good approximation as we would neglect term of the same order as $T$.
\end{itemize}
We will prove our results for the map (\ref{mainapprox}) in the weak potentials regime.
%\begin{remark}
%\end{remark} 
\section{Hausdorff Dimension of the escaping set} 
\label{hausdorffDimension}
\subsection{Main definitions}
We recall the definition of Hausdorff dimension of a metric space. First we need to define the \emph{Hausdorff $s$-measure} of a metric space $E$ as:
\[
H^s(E)\defeq\limsup_{\delta\to 0}\inf_{\substack{\mathcal{A}=\{A_i\}\\\delta-\textrm{covering of }E}}\left\{\sum_i \textrm{diam}\left(A_i\right)^s\right\}.
\]
Then we define the \emph{Hausdorff dimension} of $E$ as that critical $s$ such that:
\[
\dim_H E\defeq\inf\{s \st H^s(E)=0\}=\sup\{s \st H^s(E)>0\}.
\] 
It can be actually proved that if $s<\dim_H(E)$ then $H^s(E)=\infty$; moreover, Hausdorff dimension is a biLipschitz invariant of metric spaces.\\ 
Now let $\escaping$ be the set of escaping points, i.e. 
\[
\escaping\defeq\{(v_0,t_0)\st v_n\to\infty\ \textrm{as }n\to\infty\}.
\]
\begin{theorema}
If $\alpha<1$ and $\phi(t)$ is given by equation \eqref{defphi}, then $\dim_H \escaping =2$.
\end{theorema}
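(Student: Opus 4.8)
The inequality $\dim_{H}\escaping\le 2$ is immediate, $\escaping$ being a subset of the two--dimensional half--cylinder. For the opposite inequality the plan is to produce, for every $s<1$, a compact set $\Lambda=\Lambda(s)\subset\escaping$ with $\dim_{H}\Lambda\ge 1+s$; letting $s\uparrow 1$ then yields $\dim_{H}\escaping\ge 2$. The set $\Lambda$ will be fibred over $\sone$ (over the \emph{first} collision time $t_{0}$), each fibre a Cantor subset of a fixed velocity window. The one place where the hypothesis $\alpha<1$, i.e.\ $\gamma>1$, enters is that along such fibres the iterates of $F$ expand in the $v_{0}$ direction at a rate that can be made as large as we please by working at high energy.

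Fix a finite union of closed arcs $G\subset\sone$ of positive total length on which $\dot\phi\ge c_{0}$ and $|\ddph|\ge c_{0}$, for some $c_{0}>0$; such a $G$ exists because for a sinusoid $\dot\phi$ and $\ddph$ have no common zero (e.g.\ $G=\{t:\ A\cos2\pi t\ge A/2,\ |\sin2\pi t|\ge 1/4\}$). If an orbit satisfies $t_{n}\in G$ for all $n\ge 1$ then $v_{n}\ge v_{1}+2c_{0}(n-1)\to\infty$, so it escapes. Fix a large $V$ (to be chosen in terms of $s$) and set, for $t_{0}\in\sone$,
\[
\Lambda_{t_{0}}\defeq\{v_{0}\in[V,2V]:\ t_{n}\in G\ \text{ for all }n\ge 1\},\qquad
\Lambda\defeq\{(t_{0},v_{0}):\ t_{0}\in\sone,\ v_{0}\in\Lambda_{t_{0}}\}\subset\escaping .
\]
I would describe $\Lambda_{t_{0}}$ by its itinerary of winding numbers: for a word $m_{1}\cdots m_{n}$, the set of $v_{0}$ with $t_{k}+Cv_{k}^{\gamma}\in m_{k}+G$ for all $k\le n$ is an interval $J^{(n)}_{m_{1}\cdots m_{n}}$, and these intervals form a nested Moran scheme. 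Differentiating the orbit equations in $v_{0}$ gives the coupled linear recursion $\dpar{t_{n+1}}{v_{0}}=\dpar{t_{n}}{v_{0}}+C\gamma v_{n}^{\gamma-1}\dpar{v_{n}}{v_{0}}$ and $\dpar{v_{n}}{v_{0}}=\dpar{v_{n-1}}{v_{0}}+2\ddph(t_{n})\dpar{t_{n}}{v_{0}}$; here $C\gamma v_{n}^{\gamma-1}\asymp V^{\gamma-1}$ and $|2\ddph(t_{n})|\ge 2c_{0}$ along the relevant orbits, and an invariant--cone estimate then forces geometric growth $|\dpar{t_{n}}{v_{0}}|\asymp\prod_{k<n}\kappa_{k}$ with every $\kappa_{k}\gtrsim V^{\gamma-1}$ --- a rate that tends to $\infty$ with $V$ precisely because $\gamma>1$. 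Consequently each $J^{(n)}_{\omega}$ splits into $N_{n}\gtrsim V^{\gamma-1}\to\infty$ children, occupying together a definite fraction of $J^{(n)}_{\omega}$ and separated by comparable gaps.

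Strong hyperbolicity also gives bounded distortion, so the children of a given $J^{(n)}_{\omega}$ have comparable lengths; the natural Cantor measure $\mu_{t_{0}}$ on $\Lambda_{t_{0}}$ (equidistributing among children) then satisfies $\mu_{t_{0}}(I)\le C(s)|I|^{s}$ for every interval $I$, with $C(s)$ independent of $t_{0}$ as soon as $V\ge V_{0}(s)$ --- in fact $\dim_{H}\Lambda_{t_{0}}=1$. Since $t_{0}\mapsto\mu_{t_{0}}$ is Borel, $\mu\defeq\int_{\sone}(\delta_{t_{0}}\otimes\mu_{t_{0}})\,\deh t_{0}$ is a finite Borel measure carried by $\Lambda\subset\escaping$, and for every ball
\[
\mu\big(B((t_{0},v_{0}),r)\big)\le\int_{|t_{0}'-t_{0}|<r}\mu_{t_{0}'}\big((v_{0}-r,v_{0}+r)\big)\,\deh t_{0}'\le 2r\cdot C(s)(2r)^{s}\lesssim r^{1+s},
\]
so the mass distribution principle gives $\dim_{H}\Lambda\ge 1+s$, hence $\dim_{H}\escaping=2$.

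The substantive step is the second paragraph: producing \emph{uniform} geometric expansion for the cocycle $\big(\dpar{t_{n}}{v_{0}},\dpar{v_{n}}{v_{0}}\big)$ --- in essence a discrete Schr\"odinger--type cocycle with a large, slowly varying potential --- with a rate diverging as $V\to\infty$, together with the accompanying bounded--distortion bounds that keep the Moran ratios under control for all $n$ and all $t_{0}$ simultaneously. Once the fibres are understood, the two--dimensional conclusion follows softly from the slicing/product construction above; and since the corrections in the exact bouncing--ball map are $O(1/v)$, the same scheme gives Theorem~A for the non-approximated system.
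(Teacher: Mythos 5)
Your overall strategy is the same as the paper's: restrict to a ``good'' region where $\dot\phi$ and $\ddot\phi$ are both bounded away from zero (so that orbits staying there both escape and are hyperbolic), build a Cantor set whose generation-$n$ expansion rate diverges because $T'(v)\sim v^{\gamma-1}\to\infty$ with $\gamma>1$, conclude the one-dimensional slice has Hausdorff dimension $1$, and then upgrade to dimension $2$. Where you genuinely diverge from the paper is in the geometry of the slicing and in the passage to two dimensions. The paper works along a single curve $\Gamma_C=\{(t,C+2\dot\phi(t))\}$ chosen to lie inside the unstable cone, proves $\dim_H(\escaping\cap\Gamma_C)=1$, and then appeals briefly to ``transversality to the stable directions'' for $\dim_H\escaping=2$. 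You instead fibre over the first collision time $t_0$, build a Cantor set $\Lambda_{t_0}$ in the vertical ($v_0$) variable in a fixed energy window, and then integrate the fibre measures, $\mu=\int\delta_{t_0}\otimes\mu_{t_0}\,\deh t_0$, to run the mass distribution principle in the plane. Your two-dimensional step is arguably cleaner than the paper's: it is explicit, it avoids any construction of stable manifolds or any Hölder/Lipschitz regularity of a stable foliation, and it delivers a bona fide Frostman measure on a compact subset of $\escaping$. The vertical fibre is of course not in the unstable cone at time zero, but since $\partial t_1/\partial v_0 = T'(v_0)\gg 1$ its image after one step is, so this costs nothing.

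The place where your sketch leans on something the paper deliberately does not assume is the sentence ``strong hyperbolicity also gives bounded distortion.'' With expansion rates $\um_n,\om_n$ that grow without bound along the orbit, bounded distortion for the composition $F_n=f_n\circ\cdots\circ f_1$ is not a black box; it would require uniform control of $|f_k''|/|f_k'|^2$ on the relevant cylinders, which in turn depends on how $T''$ compares to $(T')^2$ along escaping orbits. The paper sidesteps this entirely: Lemma~\ref{reduction} only gives $\um_n\le|F_n'|\le\om_n$ with the \emph{ratio} $\om_n/\um_n$ uniformly bounded (not $\oM_n/\uM_n$!), and Proposition~\ref{hausdim} is precisely the delicate piece of work showing that this much weaker comparability, together with $\um_n\to\infty$, already forces the limit Cantor set to have dimension $1$ --- via the ``running Hausdorff dimension'' bookkeeping and the pruned sub-Cantor set $J'$ of non-truncated cylinders. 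So the gap is concrete: either establish bounded distortion directly (possible but not automatic), or replace your ``comparable lengths, equidistribute among children'' step with the paper's comparable-ratio argument. You correctly flag this as ``the substantive step,'' and everything else in your proposal --- choice of $G$, the cocycle growth $\gtrsim V^{\gamma-1}$, the Fubini estimate $\mu(B(\cdot,r))\lesssim r^{1+s}$, and the remark that the exact map differs only by $O(1/v)$ --- is sound and consistent with what the paper does.
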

\begin{proof}
The actual proof will be provided in the two following subsections; hereby we outline all necessary steps. As in \cite{Dima} we define a set $C$ such that orbits avoiding this set are hyperbolic; we will then consider a curve $\Gamma$ obtained as a graph of a function of $t$, transverse to the stable direction on hyperbolic points. The core of the proof lies in proposition \ref{hausdim} and lemma \ref{reduction} which prove that $\dim_H\left(\escaping\cap\Gamma\right)=1$; using transversality to stable directions we obtain the desired result $\dim_H \escaping=2$.  
As trivially $\dim_H(\escaping\cap\Gamma)\leq\dim_H\Gamma=1$, it suffices to obtain the reverse inequality; such lower bound will be found by bounding the dimension of a subset containing all trajectories such that the particle gain at least energy $\epsilon$ on all (but a finite number) of collisions. The latter subset can in turn be expressed as the limit inferior of all preimages of the set of trajectories for which we gain energy at least $\epsilon$ on the first collision.\\
Let $\pi$ be the projection from the phase space onto the time coordinate $t$ and let $\gamma$ be the parametrization of $\Gamma$ that is a section for $\pi$, i.e.:
\[\pi:(t,v)\mapsto t \quad \pi\circ\gamma(t)=t.\]
We are going to consider the sequence of maps of $\sone$ to itself defined by the following composition:
\[ F_n\defeq \pi\circ F^n\circ\gamma.\]
The proof of the result for this sequence of maps relies on the fact that on the hyperbolic set the map $F$ is expanding along $\Gamma$ (and its images) with an expanding rate that increases with energy; since we are looking for points belonging to the escaping set, velocity will grow along with $n$.
Therefore we need to bound the dimension of the intersection of all preimages of a fixed set by a sequence of expanding map which expansion rates are diverging. In such a setting we mimic the calculation of the Hausdorff dimension of the usual middle third Cantor set. Roughly speaking one can define a $(n,k)$-Cantor set by dividing the unit interval in a number of equal pieces $n$, removing $n-k$ of them and then iterating the procedure on the $k$ leftover intervals. The dimension of the set obtained in this way is well-known to be:
\[
\delta_{nk}=\frac{\log{k}}{\log{n}}.
\]   
Notice that if $n$ and $k$ grow while their ratio remains bounded, the above formula for $\delta_{nk}$ approaches 1; the same conclusion holds true if we let $n$ and $k$ grow to infinity as we iterate the building procedure.
Another way to define such Cantor sets is to obtain them as intersection of preimages of a given set under an expanding map of the interval; for example the classic middle third Cantor set might be obtained by taking the intersection of all preimages of the set $[0,1/3]\cup[2/3,1]$ under the map $x\mapsto 3x\ (\textrm{mod }1)$.
If we let the expansion rate grow along with $n$ we obtain once more a set of Hausdorff dimension 1. To conclude the proof we bound in a \emph{biLipshitz} fashion the sequence of maps $\{F_n\}$ between two sequences of linear expanding maps whose expansion rates are increasing, and then obtain the corresponding estimate for the Hausdorff dimension of the set of escaping orbits.\\
In the next subsection we will prove our result for a model system; in the subsequent subsection we will show that the original system can be reduced to such model system.
\subsection{Model system}
Our model system will be a sequence of continuous functions $f_n:\sone\circlearrowleft$, such that any lift $\hat{f}_n$ satisfies the following inequalities for all real $x,y$:
\[\um_n |x-y| \leq |\hat f_n(x)-\hat f_n(y)| \leq \om_n |x-y|,\quad
1<\um_n<\om_n,\quad \um_n,\om_n \nearrow \infty.\] 
Now define
$F_n\defeq f_n\circ\cdots\circ f_1$. Then clearly we have:
\[\uM_n |x-y| \leq |\hat F_n(x)-\hat F_n(y)| \leq \oM_n |x-y|,\quad
\uM_n\defeq\prod_{k=1}^n\um_k,\quad \oM_n\defeq \prod_{k=1}^n\om_k. \]
For each $n$ we can decompose $\sone$ as a disjoint union of intervals of the form $[a,b)$ such that $F_n$ is 1-1 and onto $\sone$ on each of such intervals. Each interval corresponds to a different choice of an inverse function for $F_n$. In order to choose an inverse function for $F_n$ we have to choose an inverse branch for each one of the $f_k$ of which $F_n$ is the composition. In such a way we obtain a natural labeling for the inverse branches of $F$ where we write:  
\[
\fa n\quad \sone=\bigsqcup_{j_1\cdots j_n}I_{j_1\cdots j_n},
\]
and each $j_k$ runs on all different inverse branches of $f_k$.
\begin{figure}[!ht]
\includegraphics[height=4cm]{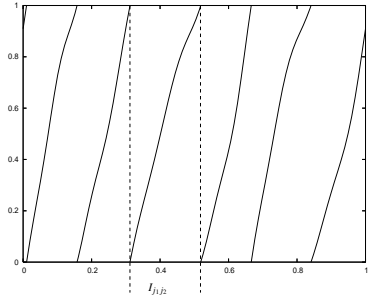}
\caption{Choosing branches of $F_k$ for $k=2$.}
\label{branches}
\end{figure}
Fix $\len\in(0,1]$ and an interval $J_0\subset\sone$ of
length $\len$, then define $J_k\defeq\left(F_k\right)^{-1}J_0=\bigsqcup J_{j_1\cdots j_k}$ where $J_{j_1\cdots j_k}= J_k\cap I_{j_1\cdots j_k}$; finally we can define:
\[J\defeq\liminf_{k\to\infty} J_k.\] 
\begin{proposition}
If $\fa n\in\naturals$ $\om_n<C\um_n$, then the Hausdorff dimension of $J$ is 1.
\label{hausdim}
\end{proposition}
\begin{proof}
 In order to compute $\dim_H J$ we will consider a smaller subset $J'$.
We will define $J'$ inductively as the limit of a decreasing sequence of sets $J'_n$. First, as $\om_n$ and $\um_n$ are increasing and diverging, from some $k$ on we have $\om_k\len>\um_k\len>2$. Without loss of generality we may assume $k=1$. Define $J'_0=J_0$; then, assuming we already defined $J'_{n-1}$ and recalling that $J_{n}=\bigsqcup J_{j_1\cdots j_{n}}$we define the decreasing sequence: 
\[
J'_n={\bigsqcup} ' J_{j_1\cdots j_n},
\]
where the union is taken \emph{only} over intervals which \emph{fully intersect} $J'_{n-1}$, i.e. all intervals that are not truncated by the intersection. In other words: each set $J_n$ is the union of many intervals; some of these intervals are completely contained in $J'_{n-1}$, and such intervals will be called \emph{good} intervals and belong to what we will call \emph{good} branches. All other intervals, i.e. such that they are not fully contained in $J'_{n-1}$ belong to \emph{bad} branches.
Now we define: 
\[
J'\defeq\liminf_{k\to\infty}J'_k=\bigcap_{k\in\naturals}J'_k.
\] 
Clearly $\fa k\ J'_k\subset J_k$. We only need to show that these sets are non-empty. Again we proceed by induction; $J'_0$ is trivially non-empty. Notice that since $F_n^{-1}(J_0)=F_{n-1}^{-1}(f_n^{-1}(J_0))$, all good intervals of $J_n$ are mapped by $F_{n-1}$ to intervals of $f_n^{-1}$ that are completely contained in $J_0$. The set $f_{n}^{-1} J_0$ is the union of several disjoint intervals and since $\um_{n}\len>2$, there exists at least one $I_{l_n}\subset J_0$, therefore at least one of the $J_l$ intervals will fully intersect $J_0$; moreover, if we let $k_n$ be the number of such intervals; we have the following estimate for $k_n$: define $\uk_n,\ok_n\in\naturals$ according to this relation:
\begin{equation}
\uk_n+1\leq\um_n\len<\uk_n+2\qquad \ok_n+1\leq\om_n\len<\ok_n+2
\label{eq:defkn}
\end{equation}
then we get $\uk_n\leq k_n\leq\ok_n+2$. In particular $k_n$ is always positive.\\
Now to obtain $J'_n$ it suffices to take, for any such interval, the preimage relative to each good branch of $F_{n-1}^{-1}$; as $J'_{n-1}$ is non-empty by inductive hypothesis we have that $J'_n$ is non-empty as well. Moreover the number $K_n$ of intervals present in $J'_n$ is easily obtained by the previous inductive procedure:
\[K_n\defeq\prod_{l=1}^nk_l.\]
\[
\prod_{l=1}^n\uk_l\defeq\uK_n\leq K_n\leq\oK_n\defeq\prod_{l=1}^n\ok_l.
\]
With a slight abuse of notation we will write from now on $J'_n=\bigsqcup_{j_1\cdots j_n}J'_{j_1\cdots j_n}$ where now each $j_l$ is running over just the good branches. The sets $J'_n$ form a decreasing sequence of nonempty
compact sets, therefore their intersection $J'$ is nonempty.\\
 We will now show that $J'\subset J$ has Hausdorff dimension $s=1$. First of all it is obvious that $s\leq 1$ being $J\subset \sone$, therefore it suffices to show that for all $s<1$ we have $H^s(J)>0$.
\begin{definition}
  We define the \emph{running Hausdorff dimension} of $\{j_1\cdots j_n\}$ the real number $s_{j_1\cdots j_n}$ such that
  \[|J'_{j_1\cdots j_n}|^{s_{j_1\cdots j_n}}=K_n^{-1}.\]
\end{definition}
The running Hausdorff dimension depends on
$\{\um_n,\om_n\}$ and $\len$ according to this estimate:
\begin{lemma}
Suppose that the following holds for all $n\in\naturals$:
\[\om_n<C\um_n.\]
Then we obtain the following lower bound for the running Hausdorff dimension:
  \begin{equation}
s_{j_1\cdots j_n}>1+\frac{\log\len+n\left(\log\frac{\len}{3}-\log C\right)}{\log\oM_n-\log\len}\defeq 1-\eps_n\nearrow 1 \textrm{ as } n\rightarrow\infty
    \label{eq:s_estimate}
  \end{equation}
\end{lemma}
\begin{proof}
  Using (\ref{eq:defkn}) we get
  \[
  \uk_n\leq\len \um_n\leq\uk_n+2\leq 3\uk_n\quad \ok_n\leq \len\om_n\leq \ok_n+2 \leq 3\ok_n.
  \]
  \[
  \uK_n\leq\len^n \uM_n\leq 3^n\uK_n\quad \oK_n\leq\len^n\oM_n\leq 3^n \oK_n.
  \]
Now since we know that $\len/\oM_n<|J'_{j_1\cdots j_n}|$, we obtain:
\[
\uK_n\left(\len/\oM_n\right)^{s_{j_1\cdots j_n}}<K_n \left(\len/\oM_n\right)^{s_{j_1\cdots j_n}}<K_n |J'_{j_1\cdots j_n}|^{s_{j_1\cdots j_n}}=1,
\]
and taking logarithms we establish the following inequality:
\[\log \uK_n-s_{j_1\cdots j_n}\left(\log\oM_n-\log\len\right)<0.\]
Therefore we obtain the bound:
\[
s_{j_1\cdots j_n}>\frac{\log\uM_n+n\log\frac{\len}{3}}{\log\oM_n-\log\len}.
\]
Now by hypothesis we know that $\oM_n<C^n\uM_n$, therefore $\log\oM_n<n\log C+\log\uM_n$ that in turn implies:
\[
s_{j_1\cdots j_n}>\frac{\log\oM_n+n\left(\log\frac{\len}{3}-\log C\right)}{\log\oM_n-\log\len},
\]
that implies estimate (\ref{eq:s_estimate}) provided that we show that $\eps_n$ is going to 0; but
\[
\eps_n\sim\frac{n}{\log \oM_n}\rightarrow 0,
\]
as $\log \oM_n/n$ is the average of the diverging sequence $\log \om_n$.
\end{proof}
To obtain a lower bound on the Hausdorff dimension we are going to use the following proposition (see e.g. \cite{Falconer}):
\begin{proposition}
\label{prop_meas}
Suppose there exists a probability measure $\mu$ on a metric space $X$ on the $\sigma$-algebra of Borel sets such that for all sufficiently small balls $B$ we have:
\begin{equation}
\mu(B)<C \textrm{diam}(B)^s,
\label{measure_estimate}
\end{equation}
then $\dim_H X>s$
\end{proposition}
We will now prove the following 
\begin{proposition}
There exists a probability measure $\mu$ on $J'$ satisfying \eqref{measure_estimate} for all $s<1$.
\end{proposition}
\begin{proof}
First we build a probability measure $\mu$ on $J'$, and then check that $\mu$ satisfies \eqref{measure_estimate} for every $s<1$. For each $n$ and choice of $j_1\cdots j_n$, fix a point $x_{j_1\cdots j_n}\in J'_{j_1\cdots j_n}$. Then define the following sequence of positive functionals acting on $\mathcal{C}(\sone,\reals)$:
\[
\fa\varphi\in\mathcal{C}(\sone,\reals)\quad \Phi_n(\varphi)\defeq\sum_{j_1,\cdots,j_n}\frac{1}{K_n}\varphi\left(x_{j_1\cdots j_n}\right).\]
Now we argue that this sequence of functionals has a weak limit for $n\to\infty$. In fact any continuous function $\varphi$ on $\sone$ is also uniformly continuous; therefore $\fa\epsilon\ \ex\delta$ such that $|x-y|<\delta$ implies $|\varphi(x)-\varphi(y)|<\epsilon$. Now take $n$ such that $\max_{j_1\cdots j_n}|J'_{j_1\cdots j_n}|<\delta$. Then for each $m>n$:
\begin{eqnarray*}
\left|\Phi_n(\varphi)-\Phi_m(\varphi)\right|&=&\left|\sum_{j_1,\cdots,j_n}\frac{1}{K_n}\varphi\left(x_{j_1\cdots j_n}\right)-\sum_{j_1,\cdots,j_m}\frac{1}{K_m}\varphi\left(x_{j_1\cdots j_m}\right)\right|=\\
&=&\left|\sum_{j_1,\cdots,j_n}\left(\frac{1}{K_n}\varphi\left(x_{j_1\cdots j_n}\right)-\sum_{j_{n+1},\cdots,j_m}\frac{1}{K_m}\varphi\left(x_{j_1\cdots j_m}\right)\right)\right|\leq\\
&\leq&\sum_{j_1,\cdots,j_m}\frac{1}{K_m}\left|\varphi\left(x_{j_1\cdots j_n}\right)-\varphi\left(x_{j_1\cdots j_m}\right)\right|\leq\\
&\leq&\sum_{j_1,\cdots,j_m}\frac{1}{K_m}\epsilon=\epsilon.
\end{eqnarray*}
In the above inequalities we used the fact that by definition \smash{$\sum_{j_{n+1}\cdots j_m}K_n/K_m=1$} and that $J'_{j_1\cdots j_m}\subset J'_{j_1\cdots j_n}$ which implies $x_{j_1\cdots j_n},x_{j_1\cdots j_m}\in J'_{j_1\cdots j_n}$ and as $|J'_{j_1\cdots j_n}|<\delta$, the inequality follows. The sequence $\Phi_n$ weakly converges to a positive functional $\Phi$ i.e. to a Borel measure $\mu$ via the Riesz representation theorem. Moreover $\mu$ is a probability measure (it suffices to compute the limit against the function $\varphi\equiv 1$).\\
At this point for any Borel set $E$ and $n\in\naturals$ we can write:
\begin{eqnarray*}
  \mu(E)&=&\lim_{m\rightarrow\infty} \sum_{j_1,\cdots, j_m}\frac{1}{K_m}\chi_E\left(x_{j_1\cdots j_m}\right)\\
&=&\sum_{j_1,\cdots, j_n}\lim_{m\rightarrow\infty}\sum_{j_{n+1},\cdots, j_m}\frac{1}{K_m}\chi_E\left(x_{j_1\cdots j_nj_{n+1}\cdots j_m}\right)\\
&\leq&\sum_{\substack{j_1,\cdots, j_n\\J'_{j_1\cdots j_n}\cap E\not =\emptyset}}\frac{1}{K_n}\lim_{m\rightarrow\infty}\sum_{j_{n+1},\cdots, j_m}\frac{K_n}{K_m}=\sum_{\substack{j_1,\cdots, j_n\\J'_{j_1\cdots j_n}\cap E\not =\emptyset}}\frac{1}{K_n},
\end{eqnarray*}
By  definition of running Hausdorff dimension this implies the following estimate for $\mu(E)$: 
\begin{equation}
\mu(E)\leq\sum_{\substack{j_1,\cdots, j_n\\J'_{j_1\cdots j_n}\cap E\not =\emptyset}}\left|J'_{j_1\cdots j_n}\right|^{s_{j_1\cdots j_n}}
\label{covariance}
\end{equation}
In order to obtain estimate (\ref{measure_estimate}), let $\rho$ be the radius of the ball $B$. Then
$2\rho\in[\uM_n^{-1},\uM_{n-1}^{-1})$ for some $n$. Now subdivide
$[\uM_n^{-1},\uM_{n-1}^{-1})$ in $\um_n$ intervals each of length
$\uM_n^{-1}$. Let $l>0$ be such that $2\rho\in[l
\uM_n^{-1},(l+1)\uM_n^{-1})$. This means that
\[
\frac{1}{l+1}2\rho\leq \uM_n^{-1} < \frac{1}{l}2\rho.
\]
Using this estimate we know that each interval of $J'_n$ contains
a ball of radius $\rho/(l+1)$, therefore a ball of radius $\rho$ can
intersect at most $l+1+2$ such intervals. Using (\ref{covariance}) we
get:
\begin{eqnarray*}
\mu(B)&\leq&\sum_{\substack{j_1,\cdots, j_n\\J'_{j_1\cdots j_n}\cap B\not =\emptyset}}\left|J'_{j_1\cdots j_n}\right|^{s_{j_1\cdots j_n}}<\sum_{\substack{j_1,\cdots, j_n\\J'_{j_1\cdots j_n}\cap B\not =\emptyset}}\left|J'_{j_1\cdots j_n}\right|^{1-\eps_n}\\
&\leq&\sum_{\substack{j_1,\cdots, j_n\\J'_{j_1\cdots j_n}\cap B\not =\emptyset}}\uM_n^{-(1-\eps_n)}\leq (l+3)\uM_n^{-(1-\eps_n)}\\
&\leq& (l+3)
\left(\frac{2\rho}{l}\right)^{1-\eps_n}=\frac{l+3}{l^{1-\eps_n}}\card{B}^{1-\eps_n}
\end{eqnarray*}
If $n$ is big enough, the fraction equals $l^{\eps_n}+\bigo{1}$, which a priori may be unbounded; but we have:
\[
l^{\eps_n}<\um_n^{\epsilon_n} < \uM_n^{\eps_n},
\]
and:
\begin{eqnarray*}
\uM_n^{\eps_n}&=&\exp\left(\log\uM_n\frac{\log\len+n\left(\log\frac{\len}{3}-\log C\right)}{\log\oM_n-\log\len}\right)\\
&=&\exp\left(\frac{\log\uM_n-\log\len+\log\len}{\log\oM_n-\log\len}\left(\log\len+n\left(\log\frac{\len}{3}-\log C\right)\right)\right)\\
&\leq&\exp\left(\left(1+\frac{\log\len}{\log\oM_n-\log\len}\right)\left(\log\len+n\left(\log\frac{\len}{3}-\log C\right)\right)\right)\\
&=&\exp\left(\log\uM_{n-1}\left(1+\frac{\log\len}{\log\oM_n-\log\len}\right)\frac{\log\len+n\left(\log\frac{\len}{3}-\log C\right)}{\log\uM_{n-1}}\right)\\
&\defeq&\uM_{n-1}^{\eps'_n},
\end{eqnarray*}
and $\eps'_n\rightarrow 0$ as $n\rightarrow \infty$. Then 
\[
l^{\eps_n}<\uM_{n-1}^{\eps'_n}<(2\rho)^{-\eps'_n}.
\]
So we finally obtain $\mu(B)<C\card{B}^{1-\eps_n-\eps'_n}$. This estimate
still depends on $n$ therefore on $\card{B}$, but notice that
$\eps_n+\eps'_n$ is monotone decreasing to $0$, therefore if we fix $n$ the
inequality will hold for all $B$ such that $\card{B}<\uM_{n-1}^{-1}$.  At
this point it is easy to see that $\fa \epsilon>0\ \ex \bar{\delta}$ such
that any $\delta$-ball $B$ with $\delta<\bar{\delta}$ will satisfy inequality \eqref{measure_estimate}:
\[
\mu(B)<C\textrm{diam}(B)^{1-\epsilon}.
\]
\end{proof}
For all $\epsilon>0$ proposition \ref{prop_meas} gives $\dim_HJ'>1-\epsilon$, therefore \[\dim_H J\geq\dim_HJ'=1,\] that proves proposition \ref{hausdim}.
\end{proof}
Now notice the very important fact that all estimates required in the proof are evaluated at points of the set $J'$; the assumptions of the statement can therefore be relaxed and we can require them to be true just on points belonging to such set. Moreover a corollary to the proof of proposition \ref{hausdim} is the somewhat weaker statement:
\begin{corollary}
Let $J_n$ be as before. If $\om_1\len>\um_1\len>2$ and $\om_n<C\um_n$, for all $n$ then: 
\[
\tilde{J}\defeq\bigcap_{k\in\naturals}J_k\quad\dim_H \tilde{J}=1.
\]\label{weaker}
\end{corollary}  
In the following section we shall use only the weaker version we just stated. 
\subsection{Reduction to the model system}
In this subsection we show that the results found in the previous subsection can be applied to the approximated bouncing ball system. We first build a set such that the dynamics of orbits that never leave this set is hyperbolic; in doing so we closely follow \cite{Dima}.
 For small $a$ we define the set 
\[
B_a\defeq\left\{(t,v) \st |\ddot\phi(t)|>a\right\},
\]
and compute the differential $\deh F$:
\[  
\deh F=\left(
\begin{array}{cc}
1&T'(v)\\
2\ddot\phi(t+T(v)) &1+2 T'(v) \ddot\phi(t+T(v))
\end{array}
\right).
\]
\begin{lemma}
If $v$ is big enough and $F(t,v)\in B_a$ then $\deh F$ is hyperbolic
\label{hyp}
\end{lemma}
\begin{proof}
It suffices to check that $\textrm{Tr}(\deh F)=2(1+T'(v)\ddot\phi(t+T(v)))>2$, but for big enough $v$, $T'(v)\gg 1$, therefore, since by hypothesis $|\ddot\phi(t+T(v))|>a$ we have hyperbolicity.
\end{proof}
Now we want to find an invariant cone field. In order to do so we find the direction corresponding to the expanding eigenvector in the limit $T'\to\infty$, corresponding to high energies. We claim that a small cone around this direction is invariant for $v$ large enough. In fact the eigenvectors of $\deh F$ in the above limit are 
\begin{eqnarray*}
V_+=(\delta t,\delta v)&=&(1,2 \ddot\phi(t+T(v)))\\
V_-=(\delta t,\delta v)&=&(1,0)
\end{eqnarray*}
Therefore, having fixed a small $0<c<a$, the cone field defined on $B_a$ by the following expression:
\begin{equation}
\mathcal{C}_{t,v}\defeq\{(\delta t,\delta v)\st \left| \frac{\delta v}{\delta t}-2 \ddot\phi(t+T(v))\right|<c\}
\label{conefield}
\end{equation}
is invariant for large enough $v$ as $V_+$ and $V_-$ are well separated on $B_a$. This means that if we take a curve whose tangent vectors lie in such cone field and we apply $F$ we are going to obtain (on the hyperbolic set) a curve whose tangent vectors again lie in the cone field; moreover since the tangents to the curve are close to the expanding direction of the map, the dynamics will be expanding along the curve. For such reasons we now define 
\[\Gamma_C=\{(t,v)=(t,C+2\dot\phi(t))\},\]
and curves from this family with big enough $C$ will be good candidates for our purposes.\\
The condition that an orbit never leaves the set $B_a$ controls the expansion rates of the maps $F_n$. Now we need to define a set to ensure the escaping condition. As we mentioned in the outline we now define the following set:
\[A_{\eps}\defeq\left\{\left(t_n,v_n\right)\st\ v_{n}-v_{n-1}>\eps\right\}.\]
As $v_{n}-v_{n-1}=2\dot\phi(t_{n})$, we have that $\left\{2\dot\phi(t)>\eps\right\}= A_\eps$. We can therefore select values of $a$, $\eps$ and $\bar v$ such that there exists an interval $J\subset \sone$ such that 
\[
J_0\times \left\{v\geq \bar{v}\right\} \subset A_\eps \cap B_a,
\]  
and $\bar v$ is big enough for lemma \ref{hyp} to hold true and for the cone field in (\ref{conefield}) to be invariant for any $v\geq \bar v$.
Recall now the definition of $F_n$:
\[\gamma(t):\sone\ni t\mapsto (t,v=C+2\dot\phi(t))\quad
\pi:(t,v)\mapsto t. \]
\[ F_n\defeq \pi\circ F^n\circ\gamma.\] 
\begin{lemma}
\label{reduction}
For large enough $C$ there exist positive constants $\tilde{C}_1, \tilde{C}_2,\underline{C},\overline{C}$ such that:
\[
\fa t\in \bigcap_{k=0}^n F_k^{-1}(J_0),\quad \tilde{C}_1\prod_{k=0}^n\left(\underline{C}+\eps k\right)^{\gamma-1}<\left|F'_n(t)\right|<\tilde{C}_2\prod_{k=0}^n\left(\overline{C}+3Ak\right)^{\gamma-1}.
\]
\begin{proof}
  Let $F^k(t,\gamma(t))\defeq (t_k,v_k)$. First we get an estimate for $\de{t_{k+1}}{t_k}$; we know that $t_{k+1}=t_{k}+T(v_k)$, then:
\[
\de{t_{k+1}}{t_k}=1+T'(v_k)\frac{\deh v_k}{\deh t_k}.
\]
If $t\in \bigcap F_k^{-1}(J)$ and $C$ is big enough, we know that the cone field $\mathcal{C}$ is invariant i.e. $(1,\de{v_k}{t_k})\in\mathcal{C}$, therefore 
\[
\left|\de{v_k}{t_k}-2\ddot\phi(t_{k+1})\right|<c,
\]
that implies
\[
\de{t_{k+1}}{t_k}=\bigo{1}+\textrm{Const }v_k^{\gamma-1} (\ddot\phi(t_{k+1})+\bigo{c}).
\]
and since $\left(t_k,v_k\right)\in B_a$ and $c<a$ we can find positive $\tilde{C_1}$ and $\tilde{C_2}$ such that
\[
\tilde{C_1}v_k^{\gamma-1}<\left|\de{t_{k+1}}{t_k}\right|<\tilde{C_2}v_k^{\gamma-1}.
\]
Now since each $\left(t_k,v_k\right)\in A_\eps$ we have bounds on $v_k$, i.e.
\[
\underline{C}+\eps k<v_k<\overline{C}+3A k,
\]
where $\underline{C}$ and $\overline{C}$ are respectively the minimum and the maximum $v$ of the curve $\Gamma$ and by \eqref{defphi}, $A$ is the maximum of $\dot\phi(t)$.
Therefore 
\[
\tilde{C_1}\left(\underline{C}+\eps k\right)^{\gamma-1}<\left|\de{t_{k+1}}{t_k}\right|<\tilde{C_2}\left(\overline{C}+3A k\right)^{\gamma-1}.
\]
Then by the chain rule we get the required inequality.
\end{proof}
\end{lemma}
In order to apply the results found in the previous section we have to ensure that $\tilde{C_1}\underline{C}^{\gamma-1}\cdot|J_0|>2$; this can always be obtained by choosing $C$ large enough. Now we can apply corollary \ref{weaker} and obtain $\dim_H(\escaping\cap J)=1$.
\end{proof}
As a final remark we would like to point out that the same idea works for the non-approximated map with some slight modifications; the proof has not been included in this paper for sake of simplicity.
\section{Existence of elliptic islands for arbitrarily high energy}
In this section we are going to prove Theorem B:
\begin{theoremb}
Let $\alpha<1$. Then in the static wall approximation, for almost all sinusoidal motions there are elliptic islands of fixed period 2 for arbitrarily high energies. Moreover if $2/3<\alpha<1$ we can prove the same result for \emph{all} sinusoidal motions.
\end{theoremb}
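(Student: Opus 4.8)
Here is the approach I would take.

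\medskip
\noindent\emph{Strategy.} The plan is to produce period-$2$ points of $F$ explicitly at arbitrarily high energies and to read off their type from the trace of the linearised second return map. A period-$2$ orbit is a pair of consecutive collisions $(t_0,v_0)$, $(t_1,v_1)=F(t_0,v_0)$ with $F(t_1,v_1)=(t_0,v_0)$ on the half-cylinder. Matching velocities forces $\dot\phi(t_0)+\dot\phi(t_1)=0$ and $v_1=v_0-2\dot\phi(t_0)$; matching times forces $t_1\equiv t_0+T(v_0)\pmod1$ and $T(v_0)+T(v_1)\in\integers$. For $\dot\phi(t)=A\cos2\pi t$ the identity $\cos2\pi t_0+\cos2\pi(t_0+T)=2\cos(2\pi t_0+\pi T)\cos\pi T$ splits the first equation into two branches, and I would work on the branch $\cos(\pi T(v_0))=0$, i.e.\ $T(v_0)=M+\tfrac12$. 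This pins $v_0=v_0(M):=\bigl((M+\tfrac12)/C\bigr)^{1/\gamma}$, forces $t_1\equiv t_0+\tfrac12$, and leaves $t_0$ free subject only to the single condition $T(v_1)\in\tfrac12+\integers$ with $v_1=v_0-2A\cos2\pi t_0$.

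\medskip
\noindent\emph{The stability formula.} From the matrix for $dF$ in Section~\ref{approx} one has $\det dF\equiv1$, so $F$ is area-preserving and a fixed point of $F^2$ is elliptic iff $|\textrm{Tr}\,dF^2|<2$. Computing $dF(t_1,v_1)\,dF(t_0,v_0)$ gives $\textrm{Tr}\,dF^2=2+(a_0+a_1)(b_0+b_1)+a_0a_1b_0b_1$ with $a_i:=2\ddot\phi(t_{i+1})$, $b_i:=T'(v_i)$, $t_2:=t_0$. On our branch $t_1\equiv t_0+\tfrac12$ makes $a_0=-a_1=4\pi A\sin2\pi t_0$, so $a_0+a_1=0$ and the formula collapses to
\[
\textrm{Tr}\,dF^2 \;=\; 2-16\pi^2A^2\sin^2(2\pi t_0)\,T'(v_0)\,T'(v_1)\ \le\ 2 .
\]
Hence the orbit is elliptic precisely when $0<16\pi^2A^2\sin^2(2\pi t_0)T'(v_0)T'(v_1)<4$; since $T'(v)=\gamma Cv^{\gamma-1}$ with $\gamma>1$ and $v_1=v_0+O(1)$, this squeezes $t_0$ into a window of width $\asymp v_0(M)^{1-\gamma}\asymp M^{-\lambda}$ about $0$ (or, symmetrically, about $\tfrac12$), where $\lambda:=1-1/\gamma\in(0,1)$ and $2/3<\alpha<1\iff\lambda<1/2$.

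\medskip
\noindent\emph{Reduction to a Diophantine condition, and the two regimes.} Near $t_0=0$ one has $v_1=v_0-2A+4\pi^2At_0^2+O(t_0^4)$, so $T(v_1)=T(v_0-2A)+4\pi^2A\,T'(v_0-2A)\,t_0^2+\cdots$ increases with $t_0^2$ and, as $t_0$ runs through the ellipticity window, sweeps an interval of length $\asymp M^{-\lambda}$ starting at $T(v_0-2A)$; thus an elliptic period-$2$ point exists at level $M$ as soon as $T(v_0(M)-2A)$ lies within $O(M^{-\lambda})$ below a half-integer (and symmetrically near $t_0=\tfrac12$ using $v_0+2A$). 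Expanding $T(v_0(M)-2A)=(M+\tfrac12)(1-2A/v_0)^\gamma$ in powers of $1/v_0$ gives $T(v_0(M)-2A)\equiv\tfrac12-aM^\lambda+(\text{terms of lower order in }M)\pmod1$ with $a:=2\gamma A\,C^{1/\gamma}$, so, up to the one-sided refinement and those lower-order corrections, everything reduces to $\textrm{dist}(aM^\lambda,\integers)=O(M^{-\lambda})$ for infinitely many $M$. If $\lambda<1/2$ (that is, $2/3<\alpha<1$) this holds for \emph{every} $a>0$, hence for every sinusoidal motion: the increasing sequence $aM^\lambda\to\infty$ has consecutive gaps $\asymp M^{-(1-\lambda)}=o(M^{-\lambda})$, so each time it overtakes an integer it lands within $o(M^{-\lambda})$ of it, and choosing the side of the crossing appropriately (with $\sin2\pi t_0\neq0$) yields a genuinely elliptic, non-parabolic point. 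For the full range $\alpha<1$ the gaps no longer outrun the target, so instead I would show that $\textrm{dist}(aM^\lambda,\integers)<M^{-\lambda}$ infinitely often for Lebesgue-a.e.\ $a$ — hence for a.e.\ amplitude $A$, hence for a.e.\ sinusoidal motion — by a Borel--Cantelli / metric Diophantine argument for the sequence $(M^\lambda)$: the $M$-th event has ``probability'' $\asymp M^{-\lambda}$, $\sum_MM^{-\lambda}=\infty$, and the correlations are controlled by van der Corput estimates for $M^\lambda$, $\lambda\in(0,1)$.

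\medskip
\noindent\emph{From elliptic points to islands, the byproduct, and the main obstacle.} To upgrade an elliptic period-$2$ point to an island I would use the abundance of admissible $M$ to keep the rotation number $\rho$ (with $2\cos2\pi\rho=\textrm{Tr}\,dF^2$) off the finitely many low-order resonances, check that the Birkhoff first twist coefficient is nonzero, and invoke Moser's invariant-curve theorem; estimating the island diameter from the normal form at level $M$ and summing over the $\asymp M^\lambda$ islands present there produces a series whose convergence or divergence, governed by an explicit exponent in $\alpha$, is the announced byproduct. The hard part will be the metric Diophantine step for $\alpha\le2/3$: establishing $\textrm{dist}(aM^\lambda,\integers)<M^{-\lambda}$ infinitely often for a.e.\ $a$ when $\lambda\ge1/2$, where the gaps of $(aM^\lambda)$ no longer beat the shrinking target, and moreover, once $\gamma>2$, the expansion of $T(v_0-2A)$ contains several terms growing in $M$ with coefficients all polynomial in the single parameter $A$, so the metric statement must be carried out along that one-parameter curve of coefficient vectors. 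Secondary technical points are the uniform control of the lower-order terms in the expansion of $T(v_0-2A)$ and the verification of the twist (non-degeneracy) condition required by Moser's theorem.
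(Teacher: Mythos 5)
Your route is genuinely different from the paper's, and the difference is not cosmetic. The paper constructs period-$2$ points on the fixed-point locus $\ell$ of the reversor $R:(t,v)\mapsto(2t_0-t-T(v),v)$, i.e.\ the $(+)$-orbits of Proposition~\ref{period2}, for which $\ddot\phi(t_1)=\ddot\phi(t_2)$. You instead pin $T(v_0)\in\tfrac12+\integers$, forcing $t_1\equiv t_0+\tfrac12$ and $\ddot\phi(t_1)=-\ddot\phi(t_2)$; that is the paper's $(-)$-orbit branch. Your trace identity $\textrm{Tr}\,dF^2=2-16\pi^2A^2\sin^2(2\pi t_0)T'(v_0)T'(v_1)$, the reduction to a one-sided Diophantine condition on $aM^{\lambda}$ with $\lambda=1-1/\gamma$, and the gap/pigeonhole argument for $\lambda<1/2$ (which is $\alpha>2/3$, the paper's overlapping regime $\xi<1/\gamma$) are all correct; structurally the two proofs run in parallel, with an ellipticity strip of width $\asymp M^{-\lambda}$ in the flight-time coordinate feeding a Khinchin-type recurrence.

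There is, however, a point you must resolve before the proposal is complete. Remark~\ref{fico} asserts that the $(-)$-branch yields the \emph{more restrictive} exponent $\xi=2-2/\gamma$, not the $(+)$-exponent $\xi=1-1/\gamma$, so that the divergence half of the Borel--Cantelli argument fails for $\gamma\ge 2$; taken at face value this says the $(-)$-branch produces high-energy elliptic islands for a.e.\ $A$ only when $\alpha>2/3$, and your route would then miss the range $\alpha\le2/3$ of Theorem~B entirely. Your own bookkeeping — the $A$-window of ellipticity for each fixed level pair $(M,N)$ has width $\asymp M^{-2\lambda}$, and there are $\asymp M^{\lambda}$ admissible $N$ per $M$ — gives total measure $\asymp M^{-\lambda}$, i.e.\ $\xi=1-1/\gamma$ for the $(-)$-branch too, which would make Remark~\ref{fico} an overcount and your approach complete. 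Since this contradicts an explicit statement in the paper, carry the count out in full; if the paper is right you will have to switch to the reversor $(+)$-branch to cover $\alpha\le2/3$.

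Two further gaps must be filled. First, you invoke van~der~Corput bounds for the weak-independence half of the quantitative Borel--Cantelli lemma without establishing them; the pair-correlation estimate is the hard core of the arithmetic result, and Section~\ref{arith} spends the bulk of its length decomposing $\pr(\An{n}\An{m})$ into \emph{waves} $B^{nk}_p$, estimating the wave count $P^n_k$ and the ratio $\lambda^n_k(p)$ of Lemma~\ref{ratio}, and resumming to match $\bigl(\sum\pr(\An{n})\bigr)^2$. Moreover you should heed your own caveat about $\gamma>2$: the paper never expands $(n^{1/\gamma}+a)^\gamma$ in powers of $1/n$, precisely because the expansion has several $n$-growing terms with $a$-dependent coefficients; it works with the intact expression throughout, and so should you rather than treating the single power sequence $aM^{\lambda}$. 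Second, the non-degeneracy of the Birkhoff normal form is not a routine check here: Section~\ref{nondegeneracy} devotes an explicit asymptotic computation to the twist coefficient in powers of $T'$, isolating the leading $T'^{3}$ term $\omega_3$ and verifying it is bounded away from zero for $\nu\in(-1,0)$ once the multiplier is kept off low-order resonances. Some argument of that kind is required before Moser's theorem can be invoked to turn your elliptic points into islands.
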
 
First we recall the definition of elliptic island: if an elliptic fixed point $p$ for a two-dimensional symplectic map $F$ is surrounded by a invariant set of closed curves on which the dynamics is conjugated to an irrational rotation on the circle we say that $p$ is surrounded by an elliptic island. Such islands are obviously Lyapunov stable.\\ 
The outline of the proof of Theorem B is as follows. In section \ref{construction} and \ref{period2orb} we build a reversor map by exploiting a symmetry of the system; we recall that a reversor is an idempotent map that conjugates the dynamics with its inverse. Following a standard technique in the theory of reversible maps (see e.g. \cite{LambRoberts}), we use the locus of fixed points of the reversor map to find a number of periodic orbits; most of them will be hyperbolic but by fine-tuning the  amplitude $A$ of the oscillation of the plate we can turn some of them into elliptic periodic points. In the main approximation (defined in section \ref{approx}) it is easy to state the ellipticity condition (section \ref{ellipticity}) in terms of $A$. We can actually find conditions to ensure that the multiplier of such periodic points belong to some given sub-interval of $\sone$. This fact will turn out to be useful when dealing with resonances. Such conditions, along with a non-degeneracy condition on the Birkhoff normal form that we prove in section \ref{nondegeneracy}, is sufficient to establish the presence of an elliptic island around the periodic point (see for instance \cite{laz} or \cite{Rafa}). The ellipticity condition (section \ref{ellcondition}) turns out to be an arithmetic condition on the parameter $A$ that is satisfied (section \ref{arith}) by infinitely many periodic points for a set of full measure of $A$ for all weak potentials. The same proof gives the stronger result that for $2/3<\alpha<1$ the statement is true for \emph{all} parameters $A$.\\ The techniques developed in the following sections allow us obtain some interesting quantitative results. As an example, in proposition \ref{islandmeasure}, we state conditions on $\gamma$ guaranteeing that the Lebesgue measure of the elliptic islands obtained with our construction is either infinite or finite.
\subsection{Construction of periodic orbits} 
\label{construction} 
Recall first the definition of $F$: 
\begin{equation*}
F:\left\{\begin{array}{l}t\\v\end{array}\right.\mapsto\begin{array}{l}t+T(v)\\v+2\dot\phi\left(t+T(v)\right)\end{array}\quad T(v)=C\cdot v^\gamma.
\end{equation*}
We are going to find periodic orbits by building a set with special dynamical properties and then considering intersections with its forward and backward images.\\
\begin{remark}
If $\dot\phi$ is odd with respect to some point $t_0$, then $F$ has a \emph{reversor} map $R$ such that 
\[
R^2=\id\qquad RFR=F^{-1}.
\]
We can explicitly write $R$ as follows:
\[
R:(t,v)\mapsto\left(2t_0-t-T(v),v\right).
\]
If we square the map we get the identity and it is an easy check that $R$ conjugates $F$ with its inverse.
Notice also that being defined on a cylinder, if $\dot\phi$ is odd with respect to $t_0$ it has to be odd with respect to $t_0+\frac{1}{2}$ as well. 
\end{remark}
We are going to define the set $\ell$ (for \emph{locus}) of fixed points of $R$:
\[ \ell\defeq\{(t,v) \st\ R(t,v)=(t,v)\}.\]
The set $\ell$ is the disjoint union of the following two curves:
\begin{equation}
\begin{array}{rcl}
\ell_+(v)&=&\left(t_0-\frac{1}{2}T(v),v\right)\\
\ell_-(v)&=&\left(\frac{1}{2}+t_0-\frac{1}{2}T(v),v\right).
\end{array}
\end{equation}
Such curves wind around the cylinder as $v$ increases. It is more convenient to partition such curves in pieces that wind just once around the cylinder in order to get graphs of (single valued) functions of $t$. This can be easily done by inverting the 1-1 map $v\mapsto T$; let this inverse be $v(T)$. Define now:
\[
\fa n\in\naturals\quad\ell_n(t)=\left(t,v\left(2\left(t_0-t\right)+n\right)\right).
\]
The curve $\ell_+$ corresponds to even values of $n$ while $\ell_-$ to odd values. Subscripts will always refer to branches and superscripts will always refer to iterates of the set, i.e. for $k\in\integers$, $\ell^k_n\defeq F^k\ell_n$.\\
The important dynamical property of $\ell$ is that 
\[
\fa x\in\ell\quad F^kx=RF^{-k}Rx=RF^{-k}x,
\]
therefore, if $F^kx$ belongs to $\ell$ as well, we get $F^kx=F^{-k}x$ that implies that the orbit of $x$ is periodic  of (possibly not least) period $2k$. Therefore points belonging to $\ell^k\cap\ell$ for $k\not=0$ are periodic points.   
The issue is now to figure out when the corresponding periodic orbits are elliptic or hyperbolic. Taking inspiration from \cite{gl} we work out from scratch the period 2 case.
\subsection{Period 2 orbits}
\label{period2orb}
First we classify period 2 orbits. This turns out to be quite simple, as the following proposition shows. To fix notations, let $\{p_1,p_2\}$ be a 2-periodic orbit, $p_1=(t_1,v_1)$ and $p_2=(t_2,v_2)$; since $v_0=v_2$, we have $\dot\phi(t_1)=-\dot\phi(t_2)$.  
\begin{definition} Being $\phi$ is a sinusoid, 2-periodic orbits can only be of one of the following types:
\begin{itemize}
\item $\ddot\phi(t_1)=\ddot\phi(t_2)$, such an orbit will be called a \emph{$(+)$-orbit};
\item $\ddot\phi(t_1)=-\ddot\phi(t_2)$, such an orbit will be called a \emph{$(-)$-orbit};
\end{itemize}
\end{definition}
\begin{proposition} 
Let $\{p_1,p_2\}$ be a 2-periodic orbit; there can be two cases:
\begin{itemize}
\item $p_1,p_2 \in\ell$, the orbit is a $(+)$-orbit;
\item $T(v_1)\equiv T(v_2)\equiv \frac{1}{2} \mod 1$; the orbit is a $(-)$-orbit.
\label{period2}
\end{itemize} 
\end{proposition}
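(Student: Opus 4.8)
The plan is to run the whole argument off the relation $\dot\phi(t_1)=-\dot\phi(t_2)$ already recorded above, feeding in the explicit form of the sinusoid. Since $\dot\phi(t)=A\cos(2\pi t)$, the equation $\cos(2\pi t_1)=-\cos(2\pi t_2)$ has, modulo $1$, exactly two families of solutions: either $t_1+t_2\equiv\tfrac12\pmod 1$, or $t_2-t_1\equiv\tfrac12\pmod 1$. There is no third possibility, so the dichotomy in the statement will come out automatically exhaustive; the two alternatives overlap only in the degenerate case $\ddot\phi(t_1)=\ddot\phi(t_2)=0$, which is harmless for everything below.

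Next I would differentiate $\phi$ once more, $\ddot\phi(t)=-2\pi A\sin(2\pi t)$, and read off which alternative is which: $t_1+t_2\equiv\tfrac12$ forces $\sin(2\pi t_2)=\sin(2\pi t_1)$, hence $\ddot\phi(t_1)=\ddot\phi(t_2)$, a $(+)$-orbit; while $t_2-t_1\equiv\tfrac12$ forces $\sin(2\pi t_2)=-\sin(2\pi t_1)$, hence $\ddot\phi(t_1)=-\ddot\phi(t_2)$, a $(-)$-orbit. This pairs the two alternatives with the $(\pm)$-classification, and it then remains only to translate each alternative back into the language of $\ell$ and of $T$.

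For the $(+)$ case I would use that $t_2=t_1+T(v_1)$ forces $T(v_1)\equiv t_2-t_1\pmod 1$, so that $t_1+t_2\equiv\tfrac12$ rewrites as $2t_1+T(v_1)\equiv\tfrac12\pmod 1$; since $\dot\phi$ in \eqref{defphi} is odd about $t_0=\tfrac14$, this is precisely the equation $R(t_1,v_1)=(t_1,v_1)$, i.e.\ $p_1\in\ell$. The reversor identity then supplies the rest: from $p_1\in\ell$ and $F^2p_1=p_1$ one gets $p_2=Fp_1=RF^{-1}p_1=RFp_1=Rp_2$, so $p_2\in\ell$ as well. For the $(-)$ case the bookkeeping is even shorter: $t_2-t_1\equiv\tfrac12$ together with $T(v_1)\equiv t_2-t_1$ gives $T(v_1)\equiv\tfrac12\pmod 1$, and applying $F$ to $p_2$ (so that $t_1\equiv t_2+T(v_2)\pmod 1$) gives $T(v_2)\equiv t_1-t_2\equiv\tfrac12\pmod 1$ in the same way.

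I do not expect a genuine obstacle here: the argument is essentially bookkeeping with the two symmetries of the sinusoid — oddness of $\dot\phi$ and evenness of $\ddot\phi$ about $t_0=\tfrac14$ — and the only points that need care are keeping the reductions modulo $1$ and modulo $\tfrac12$ straight when passing between the pair $(t_1,t_2)$ and the branches $\ell_\pm$, and pinning down the reference point $t_0=\tfrac14$ so that the $(+)$-condition becomes literally membership in $\ell$.
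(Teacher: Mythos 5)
Your proof is correct and follows essentially the same route as the paper's: reduce to $\dot\phi(t_1)=-\dot\phi(t_2)$, use the sinusoid to split into the two families $t_1+t_2\equiv\tfrac12$ and $t_2-t_1\equiv\tfrac12$ (the paper phrases these as $t_2=2t_0-t_1$ and $t_2=t_1+\tfrac12$), classify via the parity of $\ddot\phi$, and identify the first family with $\ell$ and the second with $T\equiv\tfrac12$. The only cosmetic difference is that you pin down $t_0=\tfrac14$ explicitly and spell out the reversor computation $Rp_2=RFRp_1=F^{-1}p_1=Fp_1=p_2$, where the paper just asserts ``the same is true for $p_2$.''
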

\begin{proof}
Let us write the condition for $p_1=(t_1,v_1)$ and $p_2=(t_2,v_2)$ to be periodic: 
\[
\begin{array}{rcl}
t_1&\mapsto&t_1+T(v_1)=t_2\\
t_2&\mapsto&t_1+T(v_1)+T(v_2)=t_1\\
v_1&\mapsto&v_1+2\dot\phi(t+T(v_1))=v_2\\
v_2&\mapsto&v_1+2\left(\dot\phi(t+T(v_1))+\dot\phi(t+T(v_1)+T(v_2))\right)=v_1.
\end{array}
\]
So that we have the two conditions:
\[
T(v_1)+T(v_2)\equiv 0 \mod 1\quad \dot\phi(t+T(v_1))+\dot\phi(t)=0.
\]
As $\dot\phi(t)$ is a co-sinusoid, the second condition can be true only under one of the two following assumptions:
\begin{itemize}
\item $t_1+T(v_1)=2t_0-t_1$ therefore $p_1\in\ell$; the same is true for $p_2$
\item $t_1+T(v_1)=t_1+\frac{1}{2}$ therefore $T(v_1)\equiv T(v_2) \equiv \frac{1}{2}$.
\end{itemize}
Notice moreover that as $\dot\phi$ is odd with respect to $t_0$, $\ddot\phi$ is even with respect to the same point $t_0$, therefore orbits of the first kind are $(+)$-orbits. On the other hand, orbits of the second type satisfy the opposite condition $\ddot\phi(t_1)=-\ddot\phi(t_1+1/2=t_2)$ and so they are $(-)$-orbits.
\end{proof}
\subsection{Elliptic locus for period 2 orbits}
\label{ellipticity}
In this section we find a subset of the phase space such that all 2-orbits that lie in the set are elliptic. 
\begin{proposition}
Let the two points of the orbit be $(t_1,v_1)$ and $(t_2,v_2)$; let $T'_i\defeq T'(v_i)$, but $\dot\phi_1\defeq \dot\phi(t_2)$, $\dot\phi_2\defeq \dot\phi(t_1)$ and similarly for $\ddot\phi$. Notice that by proposition \ref{period2} we have $\ddot\phi_1=\pm\ddot\phi_2$.
Let $\nu_i\defeq \ddot\phi_i T'_i$ and $-1\leq c_1<c_2\leq 0$; define the following sets:
\begin{eqnarray*}
&E^+_{c_1c_2}\defeq\{(\nu_1,\nu_2)\st \nu_1+\nu_2+\nu_1\nu_2\in(c_1,c_2)\}\\
&E^-_{c_1c_2}\defeq\{(\nu_1,\nu_2)\st \nu_1\nu_2\in(c_2,c_1)\}
\end{eqnarray*}
Then $(\pm)$-orbits belonging to $E^\pm_{c_1c_2}$ are elliptic with multiplier $\lambda$ such that \protect{$\Re\lambda\in(1+2c_1,1+2c_2)$}.
\label{therecanbeonlytwo}
\end{proposition}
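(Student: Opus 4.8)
The plan is to compute the derivative $\deh F^2$ at a period-2 point, express its trace in terms of the quantities $\nu_i = \ddot\phi_i T'_i$, and then read off the ellipticity condition directly. Recall from section \ref{hausdorffDimension} that
\[
\deh F=\begin{pmatrix} 1 & T'(v)\\ 2\ddot\phi(t+T(v)) & 1+2T'(v)\ddot\phi(t+T(v))\end{pmatrix},
\]
and since $F$ is area-preserving (or close enough after the coordinate change to action-angle-type variables; in any case $\det \deh F = 1$), the matrix $\deh F^2$ at a periodic point has determinant $1$, so its eigenvalues are determined by the trace alone: the orbit is elliptic precisely when $|\textrm{Tr}\,\deh F^2|<2$, with multiplier $\lambda$ satisfying $\Re\lambda = \frac{1}{2}\textrm{Tr}\,\deh F^2$. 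So the entire proposition reduces to computing this trace.

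First I would write $\deh F^2 = M_2 M_1$ where $M_i$ is the matrix $\deh F$ evaluated at $p_i$, being careful with the bookkeeping of which $\ddot\phi$ and which $T'$ appears where — this is exactly why the statement sets up the slightly unusual convention $\dot\phi_1 \defeq \dot\phi(t_2)$, $\ddot\phi_1 \defeq \ddot\phi(t_2)$ and $\nu_i \defeq \ddot\phi_i T'_i$: with this convention the entry in $M_1$ coming from the collision that takes $p_1$ to $p_2$ involves $\ddot\phi(t_2) = \ddot\phi_1$, and correspondingly for $M_2$. Then I would multiply the two $2\times 2$ matrices and collect terms. The trace will come out as a symmetric-looking polynomial in $\nu_1,\nu_2$; I expect something of the form $\textrm{Tr}\,\deh F^2 = 2 + 4(\nu_1 + \nu_2 + \nu_1\nu_2) + (\text{terms involving }\ddot\phi_1\ddot\phi_2 T'_1 T'_2\text{ of the "wrong" sign structure})$, or rather the cross terms will reorganize using $\ddot\phi_1 = \pm\ddot\phi_2$.

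The key point is the case split from Proposition \ref{period2}. For a $(+)$-orbit, $\ddot\phi_1 = \ddot\phi_2$, so the product $\ddot\phi_1\ddot\phi_2 T'_1 T'_2 = \nu_1\nu_2$ and every cross term assembles into the combination $\nu_1 + \nu_2 + \nu_1\nu_2$; one gets $\Re\lambda = 1 + 2(\nu_1+\nu_2+\nu_1\nu_2)$, so ellipticity with $\Re\lambda \in (1+2c_1, 1+2c_2)$ is exactly membership in $E^+_{c_1c_2}$. For a $(-)$-orbit, $\ddot\phi_1 = -\ddot\phi_2$, so $\ddot\phi_1\ddot\phi_2 T'_1 T'_2 = -\nu_1\nu_2$; the linear terms $\nu_1$ and $\nu_2$ should cancel against each other (this is where the $(-)$-orbit condition $T(v_1)\equiv T(v_2)\equiv\tfrac12$ from Proposition \ref{period2} must get used, to kill off leftover contributions), leaving $\Re\lambda = 1 + 2\nu_1\nu_2$ with the interval reversed because $\nu_1\nu_2$ enters with a sign flip relative to the $(+)$ case — hence membership in $E^-_{c_1c_2} = \{\nu_1\nu_2 \in (c_2,c_1)\}$. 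Throughout, I would note that since $-1\le c_1 < c_2 \le 0$ the target interval $(1+2c_1, 1+2c_2)$ sits inside $(-1,1)$, so $|\textrm{Tr}\,\deh F^2|<2$ genuinely holds and the eigenvalues are a complex conjugate pair on the unit circle, i.e. the point really is elliptic (not parabolic).

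The main obstacle I anticipate is purely the bookkeeping: getting the $2\times2$ matrix product right with the correct assignment of $T'_i$, $\ddot\phi_i$, $\nu_i$ to each factor, and then correctly using the two distinct characterizations of $(+)$- and $(-)$-orbits to collapse the trace into the stated clean expressions. There is no deep idea beyond "$\det = 1$ plus trace computation," but a sign error anywhere propagates into the wrong interval, so the delicate part is verifying that the $(-)$-orbit linear terms really do cancel and that the quadratic term flips sign exactly as claimed. A secondary point worth a remark is confirming area-preservation (or that the relevant $2\times2$ derivative has unit determinant) in the static-wall approximation, since the map \eqref{mainapprox} as written is a twist map whose Jacobian one should check equals $1$ — from the explicit $\deh F$ above, $\det \deh F = 1 + 2T'\ddot\phi - 2T'\ddot\phi = 1$, so this is immediate and can be dispatched in one line.
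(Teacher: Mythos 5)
Your approach is exactly the paper's: compute $\textrm{Tr}\,(\deh F_1 \deh F_2)$, use $\det \deh F = 1$ to reduce both ellipticity and the value of $\Re\lambda$ to the trace, and split according to $\ddot\phi_1 = \pm\ddot\phi_2$. However, two of the reorganizations you anticipate for the $(-)$-case are wrong and worth straightening out before you carry out the matrix multiplication. First, the quadratic term does \emph{not} flip sign: by definition $\nu_1\nu_2 = (\ddot\phi_1 T'_1)(\ddot\phi_2 T'_2) = \ddot\phi_1\ddot\phi_2 T'_1 T'_2$ identically, whichever sign relates $\ddot\phi_1$ to $\ddot\phi_2$; the $\pm$ is already absorbed into the product. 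Second, what cancels in the $(-)$-case is not ``$\nu_1$ against $\nu_2$.'' Multiplying out, the trace contains the four linear contributions
\[
2\ddot\phi_1 T'_1 + 2\ddot\phi_2 T'_2 + 2\ddot\phi_2 T'_1 + 2\ddot\phi_1 T'_2 = 2\nu_1 + 2\nu_2 + 2\ddot\phi_2 T'_1 + 2\ddot\phi_1 T'_2,
\]
and the last two cross terms become $+2\nu_1+2\nu_2$ in the $(+)$-case (linear part $4(\nu_1+\nu_2)$) and $-2\nu_1-2\nu_2$ in the $(-)$-case (linear part zero), giving $\Re\lambda = 1+2(\nu_1+\nu_2+\nu_1\nu_2)$ and $\Re\lambda = 1+2\nu_1\nu_2$ respectively. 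Also, the only ingredient that enters the trace computation is the sign relation $\ddot\phi_1 = \pm\ddot\phi_2$; the $(-)$-orbit condition $T(v_i)\equiv\frac12$ from Proposition \ref{period2} is only what \emph{establishes} that sign relation and plays no further role here. Finally, your explanation of the ``reversed'' interval in $E^-_{c_1c_2}$ by a sign flip in $\nu_1\nu_2$ does not hold up: with $\Re\lambda = 1+2\nu_1\nu_2$ and no sign flip, the condition should read $\nu_1\nu_2\in(c_1,c_2)$, which is in fact what the paper's own proof writes; the $(c_2,c_1)$ in the proposition's statement appears to be a typo.
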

\begin{proof}
We start by computing the differentials $\deh F$ on each point on the orbit:
\[
\deh F_1=\left(
\begin{array}{cc}
1&T'_1\\
2\ddot\phi_1 &1+2\ddot\phi_1 T'_1
\end{array}
\right)
\qquad
\deh F_2=\left(
\begin{array}{cc}
1&T'_2\\
2\ddot\phi_2 &1+2\ddot\phi_2 T'_2
\end{array}
\right).
\]
The condition for any matrix $M$ to be elliptic is that $|\textrm{Tr}M|<2$; moreover, for any elliptic matrix $M$, $\frac{1}{2}\textrm{Tr}M$ is the real part of its multiplier. By direct computation of the trace of the product of the two differentials and using the $(\pm)$ relations on $\ddot\phi$, we obtain:
\begin{equation}\tag{+}
\frac{1}{2}Tr\left(\deh F_1 \deh F_2\right)=1+2\left(\ddot\phi_1 T'_1+\ddot\phi_2 T'_2\right)+2T'_1T'_2\ddot\phi_1\ddot\phi_2.
\end{equation}
\begin{equation}\tag{-}
\frac{1}{2}Tr\left(\deh F_1 \deh F_2\right)=1+2T'_1T'_2\ddot\phi_1\ddot\phi_2.
\end{equation}
By direct computation we obtain the following conditions in $\nu_i$ that ensure ellipticity and the supplementary condition on the multiplier:
\begin{equation}\tag{+}
\nu_1+\nu_2+\nu_1\nu_2\in (c_1,c_2)
\end{equation}
\begin{equation}\tag{-}
\nu_1\nu_2\in (c_1,c_2),
\end{equation}
that are the defining conditions for the sets $E^\pm_{c_1c_2}$.
\end{proof}
Notice that since $\ddot\phi_1=\pm\ddot\phi_2$ we have that 
\[
\frac{\nu_1}{\nu_2}=\frac{\ddot\phi_1T'_1}{\ddot\phi_2T'_2}=\pm\frac{T'_1}{T'_2}.
\]
Since $T'=T^{1-1/\gamma}$ and $\left| T_2^{1/\gamma} - T_1^{1/\gamma}\right |\sim A=\max\left(\dot\phi\right)$ we get
\[
\frac{\nu_1}{\nu_2}\sim\pm\left(\frac{T_1}{T_2}\right)^{1-\frac{1}{\gamma}}\rightarrow\pm 1^- \textrm{ as } T_1\rightarrow\infty.
\]
Even if the shape of the sets $E^\pm_{c_1c_2}$ is not very complicated, it is convenient to state a sufficient condition in terms of just one parameter $\nu$. Fix $\eps$ small, then if we let $v_1<v_2$ big enough, we have  $(1-\eps)|\nu_2|<|\nu_1|<|\nu_2|$; a direct calculation yields the following sufficient conditions for 2-orbits to be elliptic satisfying the required condition on the multiplier:
\begin{equation}
 (+)\textrm{-orbits: }\nu_2\in (c_1',c_2') \qquad (-)\textrm{-orbits: }|\nu_2|\in(|c_1''|,|c_2''|)
\label{sufficient}
\end{equation}
where $c_1',c_2'$ and $c_1'',c_2''$ are $\eps$-close to $c_1$ and $c_2$. Note the we can get \emph{all} elliptic orbits just by taking $c_1=-1$ and $c_2=0$.
\subsection{Description of $\ell^1$ and ellipticity condition}
\label{ellcondition}
In this and the subsequent sections all pictures and geometric constructions are made keeping in mind the coordinates $(t,T(v))$. In such coordinates $\ell$ is represented by a straight line, and it is much easier to have geometric intuition about the dynamics. Recall that the simple choice for $\phi$ given by equation \eqref{defphi} implies the following expression for $\dot\phi$:
\[
\dot\phi(t)=A\cos(2\pi t).
\]
Now we claim the following
\begin{proposition}
Let $v(T)$ be the inverse function of $T(v)$. Then there exist real positive numbers $C_1<C_2$ such that, for any $n<m\in\naturals$ the following condition
\[
v\left(m-\frac{1}{2}-\frac{C_2}{A}\frac{1}{m^{1-1/\gamma}}\right)<v\left(n+\frac{1}{2}\right)+2A<v\left(m-\frac{1}{2}-\frac{C_1}{A}\frac{1}{m^{1-1/\gamma}}\right)
\]
implies the existence of a 2-periodic elliptic point close to $T=n+1/2$ and $T=m+1/2$ such that its multiplier satisfies \protect{$\Re\lambda\in(1+2c_1,1+2c_2)$}.
\label{condition}
\end{proposition}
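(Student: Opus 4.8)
## Proof Proposal

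The plan is to translate Proposition~\ref{therecanbeonlytwo} and the sufficient condition \eqref{sufficient} into an explicit arithmetic condition on the location of a 2-periodic point, by following how a point of $\ell$ at height $T(v_1)\approx n+\tfrac12$ is moved by one iterate of $F$. Recall from Proposition~\ref{period2} that a $(+)$-orbit lies entirely in $\ell$, so we look for intersections of $\ell$ with $\ell^1=F\ell$. Parametrizing $\ell$ by $T$ via the curves $\ell_n(t)=(t,v(2(t_0-t)+n))$, a point on $\ell_n$ near $T=n+\tfrac12$ is (essentially by construction) the point $t_1=t_0-\tfrac12 T(v_1)$ with $T(v_1)=n+\tfrac12$ up to small corrections; since $\dot\phi(t)=A\cos(2\pi t)$, at such a point $\dot\phi(t_1)$ is close to an extremum, so the velocity increment on the collision is close to $\pm 2A$. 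First I would make this precise: the image point $p_2=F(p_1)$ has velocity $v_2=v_1+2\dot\phi(t_0-\tfrac12 T(v_1)+T(v_1))=v_1+2A\cos(\pi(n+\tfrac12)+2\pi t_0)+(\text{corrections})$, and for $p_2$ to again lie on $\ell$ (specifically on $\ell_m$ with $m>n$ of the appropriate parity) we need $T(v_2)$ to be near $m+\tfrac12$. Writing $v_2\approx v(n+\tfrac12)+2A$ gives the middle term of the displayed inequality.

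Next I would insert the ellipticity window. By \eqref{sufficient}, a $(+)$-orbit is elliptic with $\Re\lambda\in(1+2c_1,1+2c_2)$ provided $\nu_2=\ddot\phi_2 T'_2$ lies in an interval $(c_1',c_2')$ that is $\eps$-close to $(c_1,c_2)$ — and since $\nu_1/\nu_2\to 1^-$ as computed right after Proposition~\ref{therecanbeonlytwo}, controlling $\nu_2$ suffices. Here $\ddot\phi_2=\ddot\phi(t_1)$ is, at a point with $T(v_1)$ near $n+\tfrac12$, close to its extremal value $\mp(2\pi)^2 A$ (the sign opposite to that of $\dot\phi$ there, as $\ddot\phi$ and $\dot\phi$ are a quarter period out of phase), while $T'_2=\gamma C v_2^{\gamma-1}\sim T_2^{1-1/\gamma}\sim m^{1-1/\gamma}$. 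Hence $\nu_2\sim -\,\text{Const}\cdot A\cdot m^{1-1/\gamma}$ — wait, that diverges for $\gamma>1$; the correct bookkeeping is that the deviation of $T(v_2)$ from $m+\tfrac12$, not $\ddot\phi_2$ itself, is the small quantity we tune, and $\nu_2$ being in a bounded interval $(c_1,c_2)\subset(-1,0)$ forces $\ddot\phi_2$ to be small, i.e. forces $t_1$ to be near a zero of $\ddot\phi$, equivalently $T(v_1)$ near an \emph{integer}. Reconciling this with the requirement $T(v_1)\approx n+\tfrac12$ coming from Proposition~\ref{period2} — actually the branch labels $\ell_\pm$ already encode which residue mod $1$ is relevant — I would instead keep $\ddot\phi_2\asymp A$ and read the condition $\nu_2\in(c_1,c_2)$ as a condition pinning $T(v_2)$ to within $O\!\big(\tfrac{1}{A\, m^{1-1/\gamma}}\big)$ of $m+\tfrac12$; solving the resulting two-sided inequality $c_1/\nu_2^{\text{scale}} < T(v_2)-(m+\tfrac12) < c_2/\nu_2^{\text{scale}}$ for $v(n+\tfrac12)+2A$ produces exactly the displayed chain with $C_1=\mathrm{const}\cdot|c_2|$ and $C_2=\mathrm{const}\cdot|c_1|$.

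The remaining steps are bookkeeping: (i) verify via the implicit function theorem / a continuity argument that the condition, being an \emph{open} interval constraint on a continuous function of the two "vertical" parameters $T(v_1),T(v_2)$ near $(n+\tfrac12,m+\tfrac12)$, genuinely produces a fixed point of $R\circ F$ rather than merely an approximate one — this is where the "close to $T=n+\tfrac12$ and $T=m+\tfrac12$" language in the statement is cashed out; (ii) confirm that the corrections absorbed into $O(\cdot)$ (the $x_0^\alpha/E_0$ error in $T(v)\sim v^\gamma$, the difference between $\ddot\phi$ at $t_1$ and its extremum, and the $\eps$-slack between $\nu_1$ and $\nu_2$) are all of lower order than the gap $C_2-C_1$ times $\tfrac{1}{A m^{1-1/\gamma}}$, so they can be swallowed by enlarging $C_2$ and shrinking $C_1$; and (iii) check that $\Re\lambda$ lands in $(1+2c_1,1+2c_2)$, which is immediate from Proposition~\ref{therecanbeonlytwo} once $\nu_2$ is in $(c_1,c_2)$. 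The main obstacle I anticipate is (ii): one must be careful that the half-integer offsets, the phase relationship between $\dot\phi$ and $\ddot\phi$, and the parity of $m-n$ are all tracked consistently, since a sign error or an off-by-$\tfrac12$ in the argument of $\cos$ changes whether $\dot\phi(t_1)$ is near $+A$ or $-A$ and hence flips the inequality; everything else is a routine asymptotic expansion in $1/v$.
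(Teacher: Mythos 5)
Your overall strategy coincides with the paper's: working in $(t,T)$ coordinates, you identify the $\ell^1_n$-intercept at the vertical $\tau=0$ (a zero of $\ddot\phi$) with $v(n+\tfrac12)+2A$, translate the ellipticity condition $\nu_2\in(c_1',c_2')$ into a window of width $O\bigl(1/(A\,m^{1-1/\gamma})\bigr)$ for the corresponding intercept of $\ell_m$, and solve the two-sided inequality for $C_1,C_2$. The paper runs the same computation but phrases the ellipticity window as a thin vertical strip $\tilde E^+_{c_1'c_2'}$ in the plane and reads off the inequality from the local geometry of $\ell$ (a line of slope $-2$) and $\ell^1$ (a parabola through $\tau=0$); the two routes are equivalent.

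However, the middle of your argument contains an error that is not merely bookkeeping to be cleaned up later. You first claim that $\ddot\phi_2=\ddot\phi(t_1)$ is near its \emph{extremal} value when $T(v_1)\approx n+\tfrac12$, invoking the quarter-period phase shift between $\dot\phi$ and $\ddot\phi$. That phase relation gives the opposite conclusion: where $\dot\phi$ is extremal, $\ddot\phi$ \emph{vanishes}. On $\ell_n$ one has $t_1=t_0-\tfrac12\bigl(T(v_1)-n\bigr)$, and $T(v_1)=n+\tfrac12$ places $t_1$ precisely at a zero of $\ddot\phi$; equivalently, smallness of $\ddot\phi(t_1)$ forces $T(v_1)$ near a \emph{half-integer}, not an integer as you write. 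There is therefore no conflict to "reconcile" with Proposition~\ref{period2} --- the two conditions are the same one, and the sentence in which you decide to "instead keep $\ddot\phi_2\asymp A$" contradicts the $\nu_2\in(-1,0)$ constraint you just imposed. With the phase corrected, the scaling falls out cleanly: $\ddot\phi(t_1)\sim A\cdot\delta$ where $\delta$ is the deviation of $T(v_1)$ from $n+\tfrac12$, and $\nu_2\in(c_1',c_2')$ together with $T_2'\sim m^{1-1/\gamma}$ pins $\delta$ to a window of size $O\bigl(1/(A m^{1-1/\gamma})\bigr)$, which is exactly the displayed inequality. The rest of your outline --- the openness / implicit-function step guaranteeing an actual fixed point of $R\circ F$, and absorbing lower-order corrections by enlarging $C_2$ and shrinking $C_1$ --- is sound and matches the paper.
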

\begin{proof}
Consider $\ell^1_n$: let $v_k(\tau)\defeq v\left(2\left(\tau-t_0\right)+k\right)$ so that:
\[ \ell^1_n(\tau)=\left(\tau,v_n(\tau)+2\dot\phi(\tau)\right).\] 
So for $A=0$ this is just a line in the $(t,T(v))$-plane.
\begin{figure}[!h]
  \begin{center}  
    \includegraphics[width=4cm]{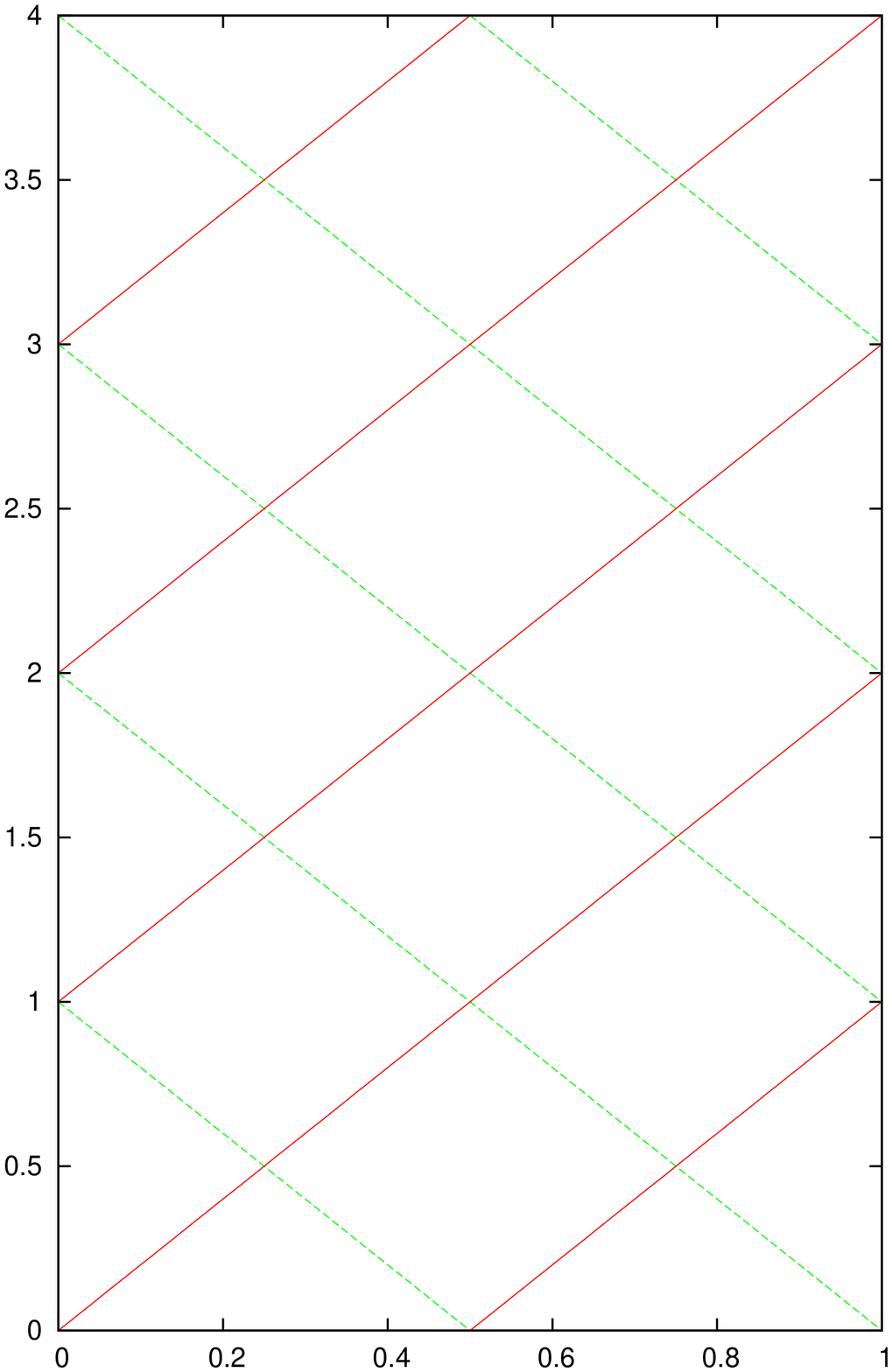} 
    \includegraphics[width=4cm]{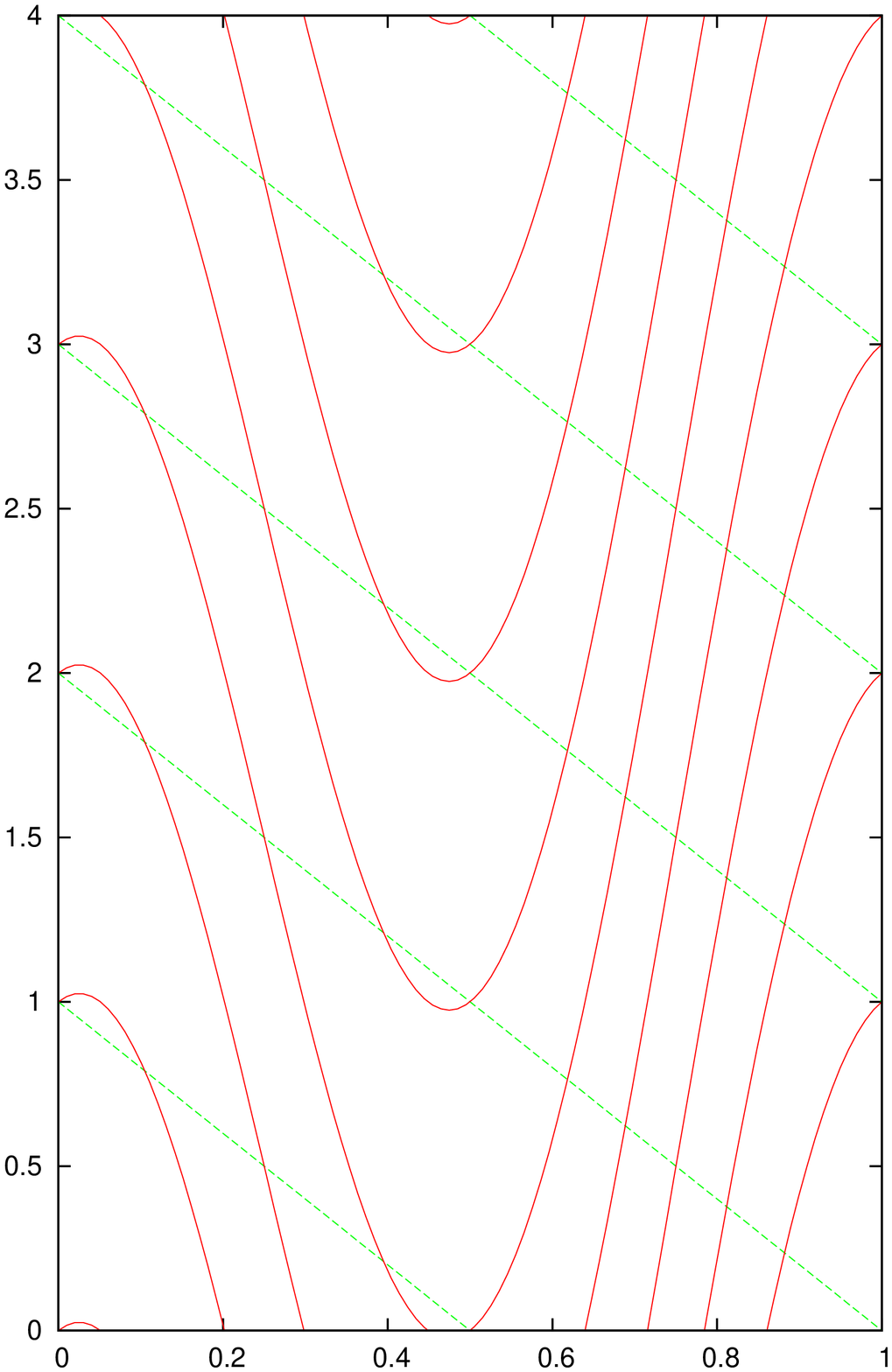} 
    \caption{On the left the reference picture for $A=0$, on the right the situation for $A>0$}
    \label{shape}
    \end{center}

\end{figure}
As $A$ increases, such line deforms and resembles the shape of $\dot\phi$, as we can observe in figure \ref{shape}. As we noticed in proposition \ref{period2}, 2-orbits obtained by intersecting $\ell$ and $\ell^1$ are $(+)$-orbits; we now claim that the highest energy point $(t_2,v_2)$ of the orbit lies where $\dot\phi>0$. In fact we know that $\dot\phi(t_2)$ is the velocity gained on the collision at time $t_1$; since we want that $v_2>v_1$ we need such quantity to be positive. Having that fixed, $\nu_2=\ddot\phi(t_1)T'(v_2)=\ddot\phi(t_2)T'(v_2)$ and $v_1<v_2$, therefore we have that condition \eqref{sufficient} is satisfied if $(t_2,v_2)\in\ell\cap\ell^1$ belongs to this set:
\[
\tilde{E}^+_{c_1'c_2'}\defeq\left\{(t,v)\st \frac{c_1'}{T'(v)}<\ddot\phi(t)<\frac{c_2'}{T'(v)}, \dot\phi(t)>0 \right\}.
\]
First notice that this set is an $\bigo{1/(A\cdot T'(v))}$-thin strip that lies $\bigo{1/(A\cdot T'(v))}$ on the right of the vertical line $\tau=0$ (that corresponds to $c_2'=-1$). By direct inspection we obtain that in $(t,T)$ coordinates, each branch of $\ell^0$ is a straight line with fixed angular coefficient -2 and each branch of $\ell^1$ near $\tau=0$ is approximated by a parabola that intersects $\tau=0$ with positive derivative (close to 2); the maximum of such parabola is given by the equation:
\[
\ddot\phi(\tau)=-\frac{1}{T'(v_n(\tau))}<-\frac{1}{T'(v)}<\frac{c_1}{T'(v)}.
\]
Figure \ref{ellipticwindow} illustrates the properties we just described.
\begin{figure}[!h] 
\begin{center} 
\includegraphics[width=6cm]{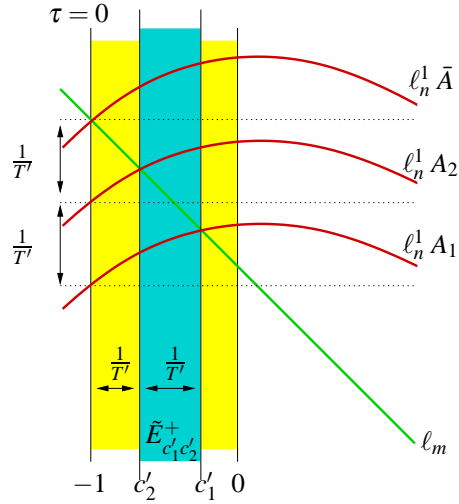}
\end{center} 
\caption{Explicit construction, in $(t,T)$ coordinates, of values of the parameter $A$ for which we have an elliptic periodic point of period 2 with given bounds on the multiplier.}
\label{ellipticwindow}
\end{figure}
The key fact to notice is that the values of $A$ we are seeking are close to values $\bar A$ of the parameter for which the intersection lies on the vertical line $\tau=0$. Let us compute the intersection of $\ell_m$ and $\ell^1_n$ with the vertical line $\tau=0$:
\begin{equation}
\ell_m(0)=(0,v(-1/2+m))
\label{int1}
\end{equation}
\begin{equation}
\ell^1_n(0)=(0,v(1/2+n)+2 A)
\label{int2}
\end{equation}
Therefore if we want $\ell$ and $\ell^1$ to intersect on the line $\tau=0$ we need to find $\bar A$ such that points in \eqref{int1} and \eqref{int2} are equal for some $n,m$, i.e.: 
\begin{equation}
v\left(n+\frac{1}{2}\right)+2\bar A=v\left(m-\frac{1}{2}\right)
\end{equation}
Now it is clear that we can find $A_1$ and $A_2$ as in the picture such that the intersection lies on the boundary of \smash{$\tilde{E}^+_{c_1'c_2'}$}. Using the properties we described above it is also clear that, in $(t,T)$ coordinates, the distances between intercepts of $\ell^1$ corresponding to each $A_i$ with the vertical $\tau=0$ are linear functions of the $t$ coordinates of the intersections themselves, therefore of order $\bigo{1/(A\cdot T'(v))}$.
More precisely, mimicking equation \eqref{int2} and recalling that $T'\sim T^{1-1/\gamma}$, we obtain that there exist $C_1$ and $C_2$ such that if    
\[ 
v\left(m-\frac{1}{2}-\frac{C_2}{A}\frac{1}{m^{1-1/\gamma}}\right)<v\left(n+\frac{1}{2}\right)+2A<v\left(m-\frac{1}{2}-\frac{C_1}{A}\frac{1}{m^{1-1/\gamma}}\right).\] then the intersection $(\ell_m\cap\ell^1_n)\cap\tilde{E}^+_{c_1'c_2'}\not = \emptyset$.
\end{proof}
The condition we obtained is basically an arithmetic condition on $A$ and $\gamma$. In the next subsection we prove that this condition is satisfied for parameters $A$ as in the statement of Theorem B. At that point we will be only left with checking the non-degeneracy condition. 
\subsection{Arithmetic condition}
\label{arith}
In this section we are going to prove an arithmetic result that is of independent interest; for simplicity we state the arithmetic condition in a slightly simplified form with respect to the case in consideration. Namely we drop the $1/2$ that appears in the statement of proposition \ref{condition} and we reverse the signs of $C_1$ and $C_2$. One can easily verify that this does not affect the proof in any sense.\\
The condition is reminiscent of the Khinchin's theorem on Diophantine approximation \cite{khin}. In fact we want to investigate parameters $\gamma$ and $a$ such that the following inclusion is true for infinitely many $n$ and $m \in\naturals$: 
\[
\left(n^{1/\gamma}+a\right)^\gamma\in\left(m+\frac{C_1}{a}m^{-\xi},m+\frac{C_2}{a}m^{-\xi}\right),
\]
for an appropriate (and fixed) choice of $C_2>C_1>0$ and $\xi>0$. In our case $a=2A$ and $\xi=1-1/\gamma$.\\
Let us first introduce some useful definitions:
\begin{definition}
Let us fix $\xi>0$, $\gamma>1$, $C_2>C_1>0$. Then 
\[
\mathscr{G}_{a,m}\defeq\left(m+\frac{C_1}{a}m^{-\xi},m+\frac{C_2}{a}m^{-\xi}\right);
\]
\[
\mathscr{G}_a\defeq\bigcup_{m\in\naturals}\mathscr{G}_{a,m}\quad
\gs_a\defeq\mathscr{G}_a^{1/\gamma};
\]
\[
X_a\defeq\left\{n^{1/\gamma}+a,\ n\in\naturals\right\}.
\]
\end{definition}
Using this notation a parameter $a$ satisfies the arithmetic condition if the cardinality $\left|X_a\cap\gs_a\right|$ is infinite.
\begin{definition} Let $n,k\in\naturals$:
\[
\A\defeq\left\{a\in\reals^+\st \left|X_a\cap\gs_a\right|=\infty \right\};
\]
\[
\An{n}\defeq\left\{a\in\reals^+\st \left(n^{1/\gamma}+a\right)^\gamma\in\mathscr{G}_a \right\};
\]
\[
\Ank{n}{k}\defeq\left\{a\in\reals^+\st \left(n^{1/\gamma}+a\right)^\gamma\in \mathscr{G}_{a,(n+k)}\right\};
\]
\end{definition}
Clearly $\An{n}=\bigcup_k\Ank{n}{k}$, moreover if $\tilde{\A}^{n_0}\defeq \bigcup_{n\geq n_0}\An{n}$, then $\A=\bigcap_{n_0}\tilde{\A}^{n_0}=\limsup_{n\to\infty}\An{n}$
\begin{lemma} For all real $\gamma>1$ $\A$ is a residual set in $\reals^+$.
\label{topological}
\end{lemma}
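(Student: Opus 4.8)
The plan is to realise $\A$ as a countable intersection of open dense subsets of $\reals^+$, which is exactly what residuality asks for. Since we already have $\A=\bigcap_{n_0\in\naturals}\tilde\A^{n_0}$ with $\tilde\A^{n_0}=\bigcup_{n\ge n_0}\An n$, it suffices to show that each $\tilde\A^{n_0}$ is open and dense.

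Openness is formal and I would dispatch it first: writing $m=n+k$, the set $\Ank nk$ is cut out by the two \emph{strict} inequalities $\tfrac{C_1}{a}m^{-\xi}<(n^{1/\gamma}+a)^\gamma-m<\tfrac{C_2}{a}m^{-\xi}$, whose two sides depend continuously on $a\in\reals^+$; hence $\Ank nk$ is open, and therefore so are $\An n=\bigcup_k\Ank nk$ and $\tilde\A^{n_0}=\bigcup_{n\ge n_0}\An n$.

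For density I would prove the stronger statement that $\An n\cap I\neq\emptyset$ for every open interval $I=(a_1,a_2)\subset\reals^+$ as soon as $n$ is large enough in terms of $I$. Put $F_n(a)\defeq(n^{1/\gamma}+a)^\gamma$, a continuous strictly increasing function with $F_n'(a)=\gamma(n^{1/\gamma}+a)^{\gamma-1}\ge\gamma n^{1-1/\gamma}$ on $I$. Thus the image $F_n\big((a_1,\tfrac{a_1+a_2}{2})\big)$ is an interval of length at least $\tfrac{\gamma}{2}(a_2-a_1)\,n^{1-1/\gamma}$, which exceeds $1$ once $n$ is large; I would pick an integer $m$ in it and set $a^\ast\defeq F_n^{-1}(m)\in(a_1,\tfrac{a_1+a_2}{2})$, noting $m>F_n(a_1)>n$ so that $m^{-\xi}<n^{-\xi}$. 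Then I would apply the intermediate value theorem to $g(a)\defeq F_n(a)-m-\tfrac{C_1+C_2}{2a}\,m^{-\xi}$ on $[a^\ast,a_2]$: here $g(a^\ast)=-\tfrac{C_1+C_2}{2a^\ast}m^{-\xi}<0$, whereas $g(a_2)\ge F_n(a_2)-m-\tfrac{C_1+C_2}{2a_1}n^{-\xi}\ge\tfrac{\gamma}{2}(a_2-a_1)n^{1-1/\gamma}-\tfrac{C_1+C_2}{2a_1}n^{-\xi}>0$ for $n$ large, since the first term diverges and the second vanishes. Hence $g(a_0)=0$ for some $a_0\in(a^\ast,a_2)\subset I$, i.e. $(n^{1/\gamma}+a_0)^\gamma-m=\tfrac{C_1+C_2}{2a_0}m^{-\xi}$ lies strictly between $\tfrac{C_1}{a_0}m^{-\xi}$ and $\tfrac{C_2}{a_0}m^{-\xi}$; that is, $a_0\in\Ank{n}{m-n}\subset\An n$ and $a_0\in I$. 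So each $\tilde\A^{n_0}$ is dense, and $\A=\bigcap_{n_0}\tilde\A^{n_0}$ is residual.

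The density step is the only part carrying content, and even there nothing beyond continuity, the intermediate value theorem, and the divergence $F_n'\ge\gamma n^{1-1/\gamma}\to\infty$ is needed — morally, the ``grid'' $\{(n^{1/\gamma}+a)^\gamma\}_n$ becomes arbitrarily fine in $a$ for fixed large $n$, so it cannot miss the tiny windows $\mathscr{G}_{a,m}$ throughout an interval of parameters. I therefore do not expect a genuine obstacle in this lemma; the delicate work of the section is the \emph{metric} refinement (upgrading ``residual'' to ``full measure''), which is what the later quantitative analysis of the sets $\Ank nk$ is designed to address.
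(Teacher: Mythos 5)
Your proof is correct and follows the same decomposition as the paper: write $\A=\bigcap_{n_0}\tilde\A^{n_0}$ and verify each $\tilde\A^{n_0}$ is open (as a union of sets cut out by strict continuous inequalities) and dense (because the map $a\mapsto(n^{1/\gamma}+a)^\gamma$ has derivative growing like $n^{1-1/\gamma}\to\infty$, so its preimages of the $\mathscr G$-windows become dense in $a$). The paper's density argument is left at the level of a sketch (``the distance between endpoints of consecutive intervals goes to $0$''); your intermediate-value-theorem argument makes that same observation precise, which is a welcome improvement but not a different approach.
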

\begin{proof}
Each $\tilde{\A}^{n_0}$ is open since it is a union of open sets. Moreover it is dense since the distance between endpoints of consecutive intervals belonging to $\gs_a$ goes to 0 as $m\to\infty$, and so do the distances between endpoints of the intervals belonging to $\An{n}$ as $n\to\infty$. As the point 0 is the limit point of  the left endpoints of the first interval in $\An{n}$ we conclude that $\tilde{\A}^{n_0}$ is dense in $\reals^+$.
\end{proof}
Let us define the following conditions involving $\xi$ and $\gamma$:
\[
 \xi \leq 1 \quad \textrm{(diverging)}; 
\]
\[
\xi<\frac{1}{\gamma}\quad \textrm{(overlapping)}.
\]
Notice that since $\gamma>1$ the overlapping condition implies the diverging condition.
Now we can state the result as follows:
\begin{theorem}
If the diverging condition does not hold, then $\A$ has measure 0.
If the diverging condition holds then $\A$ has full measure in $\reals^+$; moreover if the overlapping condition holds as well then $\A$ is the whole $\reals^+$.
\end{theorem}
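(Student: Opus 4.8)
The plan is to build on the decomposition $\A=\limsup_{n\to\infty}\An{n}$ already noted above and to run a Borel--Cantelli argument in each regime, the crucial preliminary being a two-sided estimate for $\leb(\An{n}\cap I)$ on an arbitrary compact interval $I=[\delta,M]\subset\reals^+$. The natural substitution is $u\defeq(n^{1/\gamma}+a)^\gamma$, so that $a=u^{1/\gamma}-n^{1/\gamma}$ and $\deh a/\deh u=\frac1\gamma u^{1/\gamma-1}$; the defining condition of $\An{n}$ then reads $u\in\mathscr{G}_{u^{1/\gamma}-n^{1/\gamma},\,m}$ for the unique integer $m$ with $u\in[m,m+1)$. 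As $a$ runs over $I$ the variable $u$ runs over an interval of length $\asymp n^{1-1/\gamma}$ all of whose integer points obey $m\asymp n$, hence $m^{-\xi}\asymp n^{-\xi}$; summing the lengths $\asymp m^{-\xi}$ of the $\asymp n^{1-1/\gamma}$ relevant targets $\mathscr{G}_{\cdot,m}$ and pulling back through the factor $\asymp n^{1/\gamma-1}$ gives constants $0<c_I<C_I$ (depending only on $I,\gamma,\xi,C_1,C_2$) with
\[ c_I\,n^{-\xi}\ \le\ \leb\!\left(\An{n}\cap I\right)\ \le\ C_I\,n^{-\xi}. \]
For the lower bound one only needs that each $\mathscr{G}_{a,m}$ lying in the $u$-range is a genuine non-empty subinterval, which holds once $n$ is large since $a\ge\delta$.

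If the diverging condition fails, i.e. $\xi>1$, then $\sum_n\leb(\An{n}\cap I)\le C_I\sum_n n^{-\xi}<\infty$, so the elementary (convergence) Borel--Cantelli lemma forces $\leb(\A\cap I)=\leb(\limsup_n\An{n}\cap I)=0$; as $I$ is arbitrary, $\leb(\A)=0$. Suppose now the diverging condition holds. In the overlapping sub-case $\xi<1/\gamma$ I would argue that in fact $\A=\reals^+$: fix $a>0$ and put $\theta_n\defeq(n^{1/\gamma}+a)^\gamma\bmod 1$; from $(n^{1/\gamma}+a)^\gamma=n+\gamma a\,n^{1-1/\gamma}+\bigo{n^{1-2/\gamma}}$ one gets $\theta_{n+1}-\theta_n=(\gamma-1)a\,n^{-1/\gamma}+\bigo{n^{-2/\gamma}}$, so the increments are eventually positive of exact order $n^{-1/\gamma}$, and since $\sum n^{-1/\gamma}=\infty$ the sequence $\theta_n$ passes through every neighbourhood of $0$ infinitely often. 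On each such passage $\theta_n$ must land inside the window $\left(\frac{C_1}{a}m^{-\xi},\frac{C_2}{a}m^{-\xi}\right)$ of width $\asymp n^{-\xi}$, because by the overlapping hypothesis the step $n^{-1/\gamma}$ is $o(n^{-\xi})$ while the window itself drifts only $\bigo{n^{-\xi-1}}$ per step; hence $a\in\A$.

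There remains the non-overlapping sub-case $1/\gamma\le\xi\le1$. Here the lower bound gives $\sum_n\leb(\An{n}\cap I)\ge c_I\sum_n n^{-\xi}=\infty$, but the events $a\in\An{n}$ are no longer almost sure, so I would appeal to the divergence form of Borel--Cantelli with quasi-independence (Erd\H{o}s--Chung, or the Kochen--Stone inequality). It is enough to establish the second-moment bound
\[ \sum_{m,n\le N}\leb\!\left(\An{m}\cap\An{n}\cap I\right)\ =\ (1+o(1))\,\leb(I)^{-1}\Big(\sum_{n\le N}\leb(\An{n}\cap I)\Big)^2, \]
or merely its one-sided version $\le C\big(\sum_{n\le N}\leb(\An{n}\cap I)\big)^2$: the former yields $\leb(\A\cap I)=\leb(I)$ outright, the latter gives $\leb(\A\cap I)\ge c\,\leb(I)$ with $c$ uniform in $I$, whence a Lebesgue density argument upgrades positive measure to full measure. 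Since $I$ is arbitrary this proves $\leb(\reals^+\setminus\A)=0$.

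The main obstacle is precisely this quasi-independence estimate. Geometrically $\An{n}\cap I$ is a ``comb'' of $\asymp n^{1-1/\gamma}$ intervals, each of length $\asymp n^{1/\gamma-1-\xi}$ and spaced $\asymp n^{1/\gamma-1}$ apart (the $a$-images of successive integers $m$), and for $n_1<n_2$ one must show that the two combs intersect as if independent. The spacings $n_1^{1/\gamma-1}$ and $n_2^{1/\gamma-1}$ are incommensurable, so heuristically this is clear; the delicate part is to bound the error in the overlap \emph{uniformly} over the pair $(n_1,n_2)$ --- in particular to treat nearby pairs with $n_2-n_1=\bigo{1}$ and to sum the resulting errors against $\big(\sum n^{-\xi}\big)^2$ --- which is where the bounded-distortion properties of the change of variables $a\mapsto(n^{1/\gamma}+a)^\gamma$ must be exploited. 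This is the same mechanism that underlies Khinchin's theorem, to which the statement has been compared.
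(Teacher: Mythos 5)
Your decomposition $\A=\limsup_n\An{n}$, the first-moment estimate $\leb(\An{n}\cap I)\asymp n^{-\xi}$, the convergence case by elementary Borel--Cantelli, and the overlapping case by tracking the increments $\theta_{n+1}-\theta_n\sim(\gamma-1)a\,n^{-1/\gamma}$ against a window of width $\asymp n^{-\xi}$ all reproduce Steps 1--3 of the paper's proof (the paper first restricts to intervals $[\alpha,\beta]$ with $\beta/\alpha<C_2/C_1$ so as to replace the $a$-dependent target $\mathscr{G}_a$ by $a$-independent super- and subsets, but this is a cosmetic difference from your direct change-of-variables computation of the measure of $\An{n}\cap I$).

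The genuine gap is in the non-overlapping diverging regime, which is the heart of the theorem: you name the quasi-independence second-moment estimate ``the main obstacle'' and then leave it unproved. The paper's entire Step 4 is devoted to exactly that estimate, namely the exact asymptotic
\[
\sum_{n,m\le N}\pr(\An{n}\An{m})=\frac{\ell^2}{(1-\xi)^2}N^{2-2\xi}+\hot,
\]
matching $\bigl(\sum_{n\le N}\pr(\An{n})\bigr)^2$. Establishing it is not a one-line incommensurability heuristic: the paper decomposes the intersection $\Ank{n}{k}\cap\An{m}$ into \emph{waves} $B^{nk}_p$, counts the number of waves $P^n_k\sim\bigl((N/n)^{1-1/\gamma}-1\bigr)k$ that sweep through a given comb tooth $I^n_k$, and computes the coverage ratio $\delta^r_{k+p}/\bar\Delta^r_{k+p}=\lambda^n_k(p)$ explicitly before summing over $p$, $k$ and $n$. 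The pairs with $n_2-n_1=\bigo{1}$ that you single out as delicate are precisely what the wave decomposition is built to control uniformly. Until this computation is supplied, the full-measure claim in the non-overlapping regime is unestablished, so the proposal correctly diagnoses the route but does not constitute a proof. As a smaller point, you fold the critical case $\xi=1/\gamma$ silently into the Borel--Cantelli argument; the paper isolates it (Step 5) because at that boundary the overlapping property itself depends on the size of $a$, so the case is a mixture of the two regimes rather than a clean instance of either.
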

Notice that, as in Khinchin's theorem, we obtain that the required property is satisfied either by a null set or by a full measure set. This dichotomy seems to be quite common in approximation problems similar to the one we are studying.  
\begin{proof}
The proof will be presented in four steps.
\paragraph{Step one} \emph{By restricting to a compact interval of possible parameters we get rid of the parameter $a$ in the definition of the sets $\mathscr{G}$}\\
We restrict ourselves to a bounded interval of parameters $a\in[\alpha,\beta]$ with the condition $\beta/\alpha<C_2/C_1$. This condition implies the middle inequality in the following expression:
\[
\frac{C_1}{\beta}<\frac{C_1}{a}<\frac{C_1}{\alpha}<\frac{C_2}{\beta}<\frac{C_2}{a}<\frac{C_2}{\alpha}.
\]
We can therefore build a superset $\gs_{\alpha\beta}^*$ and a subset $\gs_{\alpha\beta*}$ such that $\gs_{\alpha\beta}^*\supset\gs_a\supset\gs_{\alpha\beta*}$ for any $a\in[\alpha,\beta]$, therefore proving the result using either $\gs^*_{\alpha\beta}$ (to obtain estimates from above) or $\gs_{\alpha\beta*}$ (to obtain estimates from below) provides it for the original problem as well. Therefore we now fix once and for all $0<\alpha<\beta<(C_2/C_1)\alpha$ and we prove the result for parameters $a$ in such interval; to avoid the introduction of new notation we redefine $\A$, $\An{n}$, $\Ank{n}{k}$ as their intersection with the interval $[\alpha,\beta]$. As we can write $\reals^+$ as a countable union of such intervals we obtain the desired result. 
\paragraph{Step two} \emph{We investigate the structure of the sets $\An{n}$}\\
\begin{figure}[!ht]
\includegraphics[height=4cm]{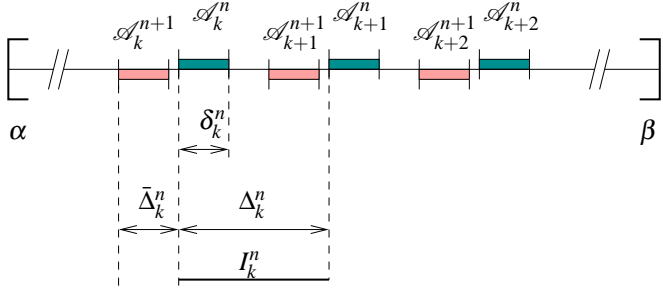}
\caption{Definition of  $\delta^n_k,\bar\Delta^n_k,\Delta^n_k,I^n_k$.}
\label{definition}
\end{figure}
Define $\delta^n_k,\bar\Delta^n_k,\Delta^n_k,I^n_k$ as in figure \ref{definition}. The following lemma provides some useful estimates:
\begin{lemma}
Define the following positive quantities:
\[
\ell_+\defeq \frac{C_2}{\alpha}-\frac{C_1}{\beta}\quad \ell_-\defeq \frac{C_2}{\beta}-\frac{C_1}{\alpha},
\] and the following set $K_n\defeq\left\{k\in\naturals\st\Ank{n}{k}\not = \emptyset\right\}$. Then:
\begin{equation}
\tag{E1}
\delta^n_k=\frac{\ell}{\gamma}(n+k)^{-\xi-(1-1/\gamma)}+\hot\textrm { for $\ell\in(\ell^+,\ell^-)$}
\label{e1}
\end{equation}
\begin{equation}
\tag{E2}
\Delta^n_k=\frac{1}{\gamma}(n+k)^{-\left(1-1/\gamma\right)}+\hot
\label{e2}
\end{equation}
\begin{equation}
\tag{E3}
\bar{\Delta}^n_k=\frac{1}{\gamma}\left(1-\frac{1}{\gamma}\right)k n^{1/\gamma-2}+\hot
\label{e3}
\end{equation}
\begin{equation}
\tag{E4}
K_n\sim[\gamma\alpha\cdot n^{1-1/\gamma}+\hot,\gamma\beta\cdot n^{1-1/\gamma}+\hot]\cap\naturals
\label{e4}
\end{equation} 
\end{lemma}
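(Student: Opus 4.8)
The lemma is purely computational, and the plan is to deduce all four estimates from a Taylor analysis of the single function $g_n(a)\defeq(n^{1/\gamma}+a)^\gamma$ on the fixed compact interval $a\in[\alpha,\beta]$. The relevant structural fact is that for $n$ large $g_n$ is extremely close to an affine map of large slope: the binomial expansion gives $g_n(a)=n+\gamma n^{1-1/\gamma}a+\bigo{n^{1-2/\gamma}}$, its derivative is $g_n'(a)=\gamma(n^{1/\gamma}+a)^{\gamma-1}$, and the relative nonlinearity $g_n''(a)/g_n'(a)=(\gamma-1)/(n^{1/\gamma}+a)$ is $\bigo{n^{-1/\gamma}}$ uniformly on $[\alpha,\beta]$; in particular $g_n$ is increasing and invertible there, with $g_n^{-1}(y)=y^{1/\gamma}-n^{1/\gamma}$. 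For $k$ in the relevant range write $a^*_k\defeq g_n^{-1}(n+k)=(n+k)^{1/\gamma}-n^{1/\gamma}$ for the unique root of $g_n(a)=n+k$; since $n^{1/\gamma}+a^*_k=(n+k)^{1/\gamma}$ this yields the identity $g_n'(a^*_k)=\gamma(n+k)^{1-1/\gamma}$, the slope that will govern all the geometry. Throughout, every ``$+\hot$'' denotes corrections of strictly lower order than the displayed leading term, uniformly in $k$ over the set under consideration and in $a\in[\alpha,\beta]$.

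First I would settle (E4). The window $\mathscr{G}_{a,n+k}$ sits within $\bigo{(n+k)^{-\xi}}$ of the integer $n+k$, while $g_n([\alpha,\beta])=[\,n+\gamma\alpha\,n^{1-1/\gamma}+\hot,\ n+\gamma\beta\,n^{1-1/\gamma}+\hot\,]$; hence $\Ank{n}{k}\ne\emptyset$ is possible only when $n+k$ lies within $\bigo{1}$ of this interval, i.e. $k\in[\gamma\alpha\,n^{1-1/\gamma},\gamma\beta\,n^{1-1/\gamma}]$ up to an $\bigo{1}$ error at each end, and conversely whenever $a^*_k$ lies in the interior of $[\alpha,\beta]$ a neighbourhood of $a^*_k$ belongs to $\Ank{n}{k}$. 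This gives $K_n$ as stated, the ``$\sim$'' absorbing the $\bigo{1}$ windows at the two extreme ends, which may degenerate and which play no role in the later counting.

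Next, for $k$ in the interior of $K_n$ I would locate $I^n_k=\Ank{n}{k}$ by linearising $g_n$ at $a^*_k$. Setting $a=a^*_k+s$, the defining condition $g_n(a)-(n+k)\in\bigl(\tfrac{C_1}{a}(n+k)^{-\xi},\tfrac{C_2}{a}(n+k)^{-\xi}\bigr)$ becomes $g_n'(a^*_k)\,s\in\bigl(\tfrac{C_1}{a^*_k}(n+k)^{-\xi},\tfrac{C_2}{a^*_k}(n+k)^{-\xi}\bigr)+\hot$, because the $a$-drift of the endpoints of $\mathscr{G}_{a,n+k}$ is only $\bigo{(n+k)^{-\xi}}$, negligible against $g_n'(a^*_k)=\gamma(n+k)^{1-1/\gamma}$, and the quadratic term of $g_n$ is of even lower order by the nonlinearity bound. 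Dividing by $g_n'(a^*_k)$ exhibits $I^n_k$ as an interval of length $\delta^n_k=\tfrac{C_2-C_1}{\gamma a^*_k}(n+k)^{-\xi-(1-1/\gamma)}+\hot$; since $a^*_k\in[\alpha,\beta]$ the coefficient $\ell\defeq(C_2-C_1)/a^*_k$ lies in $[(C_2-C_1)/\beta,(C_2-C_1)/\alpha]\subset(\ell_-,\ell_+)$, which is (E1). Here the inequality $\ell_->0$, making $(\ell_-,\ell_+)$ a genuine interval, is precisely the normalisation $\beta/\alpha<C_2/C_1$ imposed in Step one.

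Finally, the spacing estimates (E2) and (E3) are pure expansion of $k\mapsto a^*_k=(n+k)^{1/\gamma}-n^{1/\gamma}$. Reading off from figure \ref{definition} that $\Delta^n_k$ is the spacing between consecutive windows — which, up to the $\hot$ shift just computed, equals $a^*_{k+1}-a^*_k$ — and that $\bar\Delta^n_k$ is the cumulative variation of that spacing, i.e. $\Delta^n_0-\Delta^n_k$ to leading order, the mean value theorem gives $\Delta^n_k=(n+k+1)^{1/\gamma}-(n+k)^{1/\gamma}=\tfrac{1}{\gamma}(n+k)^{-(1-1/\gamma)}+\hot$, which is (E2), and one further expansion $(1+k/n)^{1/\gamma-1}=1-(1-\tfrac{1}{\gamma})\tfrac{k}{n}+\hot$ gives $\Delta^n_0-\Delta^n_k=\tfrac{1}{\gamma}\bigl(n^{1/\gamma-1}-(n+k)^{1/\gamma-1}\bigr)=\tfrac{1}{\gamma}\bigl(1-\tfrac{1}{\gamma}\bigr)k\,n^{1/\gamma-2}+\hot$, which is (E3). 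No single step here is hard; the one point that genuinely requires care, and which I expect to be the main obstacle, is the \emph{uniformity} of the error terms — one must check, as sketched above, that both the $a$-dependence of the endpoints of $\mathscr{G}_{a,m}$ and the nonlinearity of $g_n$ contribute only corrections of lower order than the displayed leading terms \emph{uniformly} in $k\in K_n$ (here $\gamma>1$ and $\xi>0$ are exactly what makes the relevant exponents negative), and that the $\bigo{1}$ windows near the ends of $K_n$ can be discarded. Everything else is the binomial theorem.
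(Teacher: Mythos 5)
Your argument is correct and follows essentially the same route as the paper: all four estimates come down to binomial expansion of $(n+k)^{1/\gamma}$ and its increments, merely packaged here through the forward map $g_n$ and its slope $g_n'(a^*_k)=\gamma(n+k)^{1-1/\gamma}$ rather than by direct expansion of $(n+k+\cdots)^{1/\gamma}-n^{1/\gamma}$. Your linearisation pins the constant in (E1) to $\ell=(C_2-C_1)/a^*_k$, slightly sharper than (and contained in) the interval $(\ell_-,\ell_+)$ the paper obtains by replacing $a$ by the endpoints of $[\alpha,\beta]$, and your rendering of $\bar\Delta^n_k$ as $\Delta^n_0-\Delta^n_k$ gives the sign that matches the lemma statement, whereas the paper's in-text derivation carries a harmless stray minus.
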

\begin{proof} 
We first bound the length of the intervals $\Ank{n}{k}$:
\begin{eqnarray*}
\delta^n_k&<&\left(n+k+\frac{C_2}{\alpha}(n+k)^{-\xi}\right)^{1/\gamma}-\left(n+k+\frac{C_1}{\beta}(n+k)^{-\xi}\right)^{1/\gamma}\\
%&=&\left(n+k\right)^{1/\gamma}\left(\left(1+\frac{C_2}{\alpha}\left(n+k\right)^{-\xi-1}\right)^{1/\gamma}-\left(1+\frac{C_1}{\beta}\left(n+k\right)^{-\xi-1}\right)^{1/\gamma}\right)\\
&=&\frac{\ell^+}{\gamma}\left(n+k\right)^{-\xi-(1-1/\gamma)}+\hot
\end{eqnarray*}
The bound from below is similar and yields the expected result. 
Then we estimate the length of the intervals $I^{n}_{k}$:
\begin{eqnarray*}
\Delta^n_k&=&\left(n+k+1\right)^{1/\gamma}-n^{1/\gamma}-\left(\left(n+k\right)^{1/\gamma}-n^{1/\gamma}\right)\\
&=&\left(n+k+1\right)^{1/\gamma}-\left(n+k\right)^{1/\gamma}\\
&=&\frac{1}{\gamma}\left(n+k\right)^{1/\gamma-1}+\hot
\end{eqnarray*}
Next the offset of two subsequent $\Ank{n}{k}$:
\begin{eqnarray*}
\bar\Delta^n_k&=&\left(n+k+1\right)^{1/\gamma}-\left(n+1\right)^{1/\gamma}-\left(\left(n+k\right)^{1/\gamma}-n^{1/\gamma}\right)\\
&=&\left(n+k+1\right)^{1/\gamma}-\left(n+k\right)^{1/\gamma}-\left(\left(n+1\right)^{1/\gamma}-n^{1/\gamma}\right)\\
&=&\frac{1}{\gamma}\left(\left(n+k\right)^{1/\gamma-1}-n^{1/\gamma-1}\right)+\hot\\
&=&-\frac{1}{\gamma}\left(1-\frac{1}{\gamma}\right)k\cdot n^{1/\gamma-2}+\hot
\end{eqnarray*}
Finally we estimate $K_n$:
\begin{eqnarray*}
K_n&=&\left[\left(n^{1/\gamma}+\alpha\right)^\gamma-n,\left(n^{1/\gamma}+\beta\right)^\gamma-n\right]\cap\naturals\\
&=&\left[\gamma\alpha\cdot n^{1-1/\gamma}+\hot,\gamma\beta\cdot n^{1-1/\gamma}+\hot\right]\cap\naturals;
\end{eqnarray*}
notice that 
\[ |K_n|\sim\gamma n^{1-1/\gamma}\cdot(\beta-\alpha).\]
\end{proof}
\paragraph{Step three} \emph{Overlapping regime}\\
From the previous estimates we can already obtain the result in the overlapping regime. In fact \eqref{e4} implies that $k$ is $\mathscr{O}\left(n^{1-1/\gamma}\right)$, therefore, by \eqref{e3}, $\bar\Delta^n_k$ is $\mathscr{O}\left(n^{-1}\right)$. This implies that if $-\xi-(1-1/\gamma)>-1$ (i.e. $\xi<1/\gamma$ that is the overlapping condition) $\Ank{n}{k}$ and $\Ank{n+1}{k}$ will eventually overlap as, by \eqref{e1}, the length of the intervals $\Ank{n}{k}$ goes to zero slower than their offset. Since they overlap and they are moving like $1/n$, they will eventually get out to the left so for each fixed $k$ they are going to cover the whole interval, therefore $\An{n}$ is going to cover $[\alpha,\beta]$ infinitely many times, and $\A$ will contain $[\alpha,\beta]$.\\
\paragraph {Step four} \emph{Non-overlapping regime}\\
We will now focus on the strictly non-overlapping regime i.e. $\xi>1/\gamma$; the critical case $\xi=1/\gamma$ will be considered later as it is just a combination of other cases.\\ 
Define, for any Borel set $E\subset[\alpha,\beta]$, $\pr(E)=\leb(E)/(\beta-\alpha)$ (the normalized Lebesgue measure) as a probability measure on $[\alpha,\beta]$.
We are going to prove that the set $\A$ has either full measure or measure zero using the following strong form of the Borel-Cantelli lemma:
\begin{lemma}[Borel-Cantelli-Erd\"os-R\'enyi \cite{Erdos}]
Let $\{\A_k\}$ be a sequence of events on a probability space $(\Omega,\mathscr{F},\pr)$. If
\[
\sum_{n=1}^\infty\pr(\A_n)<\infty\quad\textrm{(convergence)},
\]
then $\pr(\limsup \A_n)=0$.
If instead
\[
\sum_{n=1}^\infty\pr(\A_n)=\infty\quad\textrm{(divergence)}
\]
and 
\[
\liminf\frac{\sum_{k,l=1}^n\pr(\A_k\A_l)}{\left(\sum_{k=1}^n\pr(\A_k)\right)^2}=1\quad\textrm{(weak independence)},
\]
then $\pr(\limsup \A_n)=1.$
\end{lemma}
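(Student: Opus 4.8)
The convergence half is the classical first Borel--Cantelli lemma and requires no new idea. Writing $\limsup\A_n=\bigcap_{N\ge1}\bigcup_{n\ge N}\A_n$, one has for every $N$ that $\pr(\limsup\A_n)\le\pr\bigl(\bigcup_{n\ge N}\A_n\bigr)\le\sum_{n\ge N}\pr(\A_n)$, and the right-hand side is the tail of a convergent series, hence $\to0$; so $\pr(\limsup\A_n)=0$. I would dispatch this in two lines and spend the rest of the argument on the divergence half.

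For the divergence half I plan to use the second--moment (Paley--Zygmund) method applied to the counting variables $S_n\defeq\sum_{k=1}^n\chi_{\A_k}$. The first step is the identification $\limsup\A_n=\{S_\infty=\infty\}$, where $S_\infty\defeq\sum_{k=1}^\infty\chi_{\A_k}\in[0,\infty]$, so it suffices to prove $\pr(S_\infty=\infty)=1$. Set $T_n\defeq\mathbb{E}S_n=\sum_{k=1}^n\pr(\A_k)$, which diverges by hypothesis, and note $\mathbb{E}S_n^2=\sum_{k,l=1}^n\pr(\A_k\A_l)<\infty$ since $0\le S_n\le n$. The key elementary inequality is: for every $\theta\in[0,1)$, splitting $T_n=\mathbb{E}[S_n\chi_{\{S_n\le\theta T_n\}}]+\mathbb{E}[S_n\chi_{\{S_n>\theta T_n\}}]\le\theta T_n+\sqrt{\mathbb{E}S_n^2}\,\sqrt{\pr(S_n>\theta T_n)}$ (Cauchy--Schwarz on the second term, using $\chi^2=\chi$) and rearranging yields $\pr(S_n>\theta T_n)\ge(1-\theta)^2\,T_n^2/\mathbb{E}S_n^2$.

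Now I would feed in the weak-independence hypothesis. Since Cauchy--Schwarz always gives $\mathbb{E}S_n^2\ge T_n^2$, the ratio $R_n\defeq\mathbb{E}S_n^2/T_n^2=\sum_{k,l\le n}\pr(\A_k\A_l)/\bigl(\sum_{k\le n}\pr(\A_k)\bigr)^2$ is $\ge1$, and the assumption $\liminf R_n=1$ furnishes a subsequence $n_j$ along which $R_{n_j}\to1$, whence $\pr(S_{n_j}>\theta T_{n_j})\ge(1-\theta)^2/R_{n_j}\to(1-\theta)^2$. Fixing $M\in\naturals$ and using $T_{n_j}\to\infty$ to make $\theta T_{n_j}\ge M$ for $j$ large, we have $\{S_{n_j}>\theta T_{n_j}\}\subseteq\{S_{n_j}\ge M\}$, so $\limsup_j\pr(S_{n_j}\ge M)\ge(1-\theta)^2$. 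Because $S_n$ is nondecreasing in $n$, $\{S_n\ge M\}$ increases to $\{S_\infty\ge M\}$, hence $\pr(S_\infty\ge M)=\sup_n\pr(S_n\ge M)\ge(1-\theta)^2$; and since $\{S_\infty\ge M\}$ decreases to $\{S_\infty=\infty\}$ as $M\to\infty$, this forces $\pr(S_\infty=\infty)\ge(1-\theta)^2$. As $\theta\in(0,1)$ was arbitrary, letting $\theta\downarrow0$ gives $\pr(S_\infty=\infty)=1$, i.e.\ $\pr(\limsup\A_n)=1$.

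The only delicate point, and where I would be most careful, is the passage from the lower bound on $\pr(S_{n_j}>\theta T_{n_j})$ — valid only along the subsequence $n_j$ produced by $\liminf R_n=1$ — to the unconditional conclusion: this goes through precisely because $\{S_\infty\ge M\}$ is the \emph{increasing union} of $\{S_n\ge M\}$ over \emph{all} $n$ (so a subsequence suffices), and because the two monotone limits, first in $n$ and then in $M$, must be taken in that order, with the limit $\theta\downarrow0$ performed last. Everything else is a one-line Cauchy--Schwarz and routine bookkeeping.
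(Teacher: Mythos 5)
Your proof is correct. Note, however, that the paper does not actually prove this lemma: it is stated as a citation to Erd\H{o}s--R\'enyi \cite{Erdos} and used as a black box in Step four of the arithmetic-condition argument, so there is no in-paper proof to compare against. Your argument is the standard modern one: the first half is the classical first Borel--Cantelli lemma, and the second half is the second-moment (Paley--Zygmund) method applied to $S_n=\sum_{k\le n}\chi_{\A_k}$, which is essentially the Kochen--Stone proof specialized to the case where the $\liminf$ of the correlation ratio equals $1$. All the delicate points are handled properly: the Paley--Zygmund rearrangement is valid; the lower bound $\pr(S_{n_j}>\theta T_{n_j})\ge(1-\theta)^2/R_{n_j}$ only along the subsequence realizing the $\liminf$ suffices because $\{S_\infty\ge M\}=\bigcup_n\{S_n\ge M\}$ is an increasing union over \emph{all} $n$; and the order of limits ($n\to\infty$, then $M\to\infty$, then $\theta\downarrow0$, with $\theta>0$ needed so that $\theta T_{n_j}\ge M$ eventually) is correctly observed. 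This is a clean, self-contained justification of a result the paper merely quotes.
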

We will get the result by verifying the above conditions for the sets $\An{n}$. The following two lemmas deal with the estimates of the convergence/divergence condition and the weak independence condition respectively.
\begin{lemma}
Fix $\bar n$ and let $N$ tend to infinity. Then:
\begin{equation}
\sum_{n=\bar{n}}^{N}\pr\left(\An{n}\right)=\frac{1}{1-\xi}\ell N^{1-\xi}+\hot
\label{S}
\end{equation}
\end{lemma}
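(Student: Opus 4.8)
The plan is to turn the sum into an explicit Riemann sum by feeding in the structural estimates (E1)–(E4), using crucially that in the non-overlapping regime $\xi>1/\gamma$ the pieces of $\An{n}$ do not overlap. First I would note that for $n$ large the intervals $\Ank{n}{k}$, $k\in K_n$, are pairwise disjoint, so that
\[
\pr\left(\An{n}\right)=\frac{1}{\beta-\alpha}\sum_{k\in K_n}\delta^n_k .
\]
Indeed $\Ank{n}{k}$ is contained in the interval $I^n_k$ of those $a$ with $(n^{1/\gamma}+a)^\gamma\in(n+k,n+k+1)$ (this needs only that $\tfrac{C_2}{\alpha}(n+k)^{-\xi}<1$), and by construction the $I^n_k$ are disjoint for distinct $k$; no finer information on where $\Ank{n}{k}$ sits inside $I^n_k$ is required, and in any case (E1)–(E2) give $\delta^n_k = o(\Delta^n_k)$. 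Passing, as in Step one, to the superset $\gs_{\alpha\beta}^*$ (for upper bounds) or the subset $\gs_{\alpha\beta*}$ (for lower bounds) makes the endpoints of the $\mathscr{G}$-intervals independent of $a$, so (E1) reads, uniformly in $k\in K_n$,
\[
\delta^n_k=\frac{\ell}{\gamma}(n+k)^{-\xi-(1-1/\gamma)}+\hot ,
\]
with $\ell\in[\ell_-,\ell_+]$ the value (namely $\ell_+$ or $\ell_-$) appearing in the statement; since for Borel–Cantelli only divergence will matter, this ambiguity is harmless.

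Next I would carry out the two-scale asymptotic. By (E4) the set $K_n$ consists of $|K_n|\sim\gamma(\beta-\alpha)\,n^{1-1/\gamma}$ consecutive integers, each of size $k=\bigo{n^{1-1/\gamma}}=o(n)$, so $n+k=n\bigl(1+o(1)\bigr)$ uniformly over $k\in K_n$ and hence $(n+k)^{-\xi-(1-1/\gamma)}=n^{-\xi-(1-1/\gamma)}\bigl(1+o(1)\bigr)$. Summing the previous display over $k\in K_n$ and dividing by $\beta-\alpha$,
\[
\pr\left(\An{n}\right)=\frac{1}{\beta-\alpha}\cdot\frac{\ell}{\gamma}\cdot|K_n|\cdot n^{-\xi-(1-1/\gamma)}\bigl(1+o(1)\bigr)=\ell\,n^{-\xi}+\hot ,
\]
where in the last step the powers of $n$ combine as $1-1/\gamma-\xi-(1-1/\gamma)=-\xi$ and the prefactors cancel.

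Finally, summing over $n$ from $\bar n$ to $N$ and comparing with $\int_{\bar n}^{N}x^{-\xi}\deh x=\frac{1}{1-\xi}\bigl(N^{1-\xi}-\bar n^{1-\xi}\bigr)$ — which is finite, with $N^{1-\xi}$ the dominant term, precisely because the diverging condition holds in the strict form $\xi<1$ — yields
\[
\sum_{n=\bar n}^{N}\pr\left(\An{n}\right)=\frac{\ell}{1-\xi}N^{1-\xi}+\hot ,
\]
which is (S); in particular the sum diverges, which is the point of the lemma (it supplies the divergence hypothesis of the Borel–Cantelli–Erd\H{o}s–R\'enyi lemma). At the borderline $\xi=1$ the identical computation gives $\ell\log N$, the limiting form of the right-hand side, and divergence still holds.

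The main obstacle is the bookkeeping of uniformity of the $\hot$ corrections. One must check that the error terms in (E1) and in $|K_n|$ from (E4) are uniform in $k\in K_n$ and that, after being summed over the $\bigo{n^{1-1/\gamma}}$ values of $k$ and then over $n\le N$, they contribute strictly less than $N^{1-\xi}$. This is exactly where it matters that $k$ only ranges up to $\bigo{n^{1-1/\gamma}}$, so that replacing $n+k$ by $n$ costs a relative error $\bigo{n^{-1/\gamma}}$ which is uniformly small, and that $|K_n|$ is itself pinned down only up to a relative $\hot$ of comparable order; once this is granted the rest is a routine Riemann-sum estimate.
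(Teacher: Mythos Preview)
Your proposal is correct and follows essentially the same route as the paper: decompose $\An{n}$ into the disjoint pieces $\Ank{n}{k}$, plug in the length estimate (E1), use (E4) to replace $n+k$ by $n$ and count the number of terms, cancel exponents to obtain $\pr(\An{n})=\ell\,n^{-\xi}+\hot$, and then sum in $n$. The paper's proof is more terse (it does not spell out the disjointness argument or the uniformity of the $\hot$ terms that you discuss), but the computation is the same line for line.
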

\begin{proof}
Let us first compute:
\begin{eqnarray*}
\leb\left(\An{n}\right)&=&\sum_{k\in K_n}\leb\left(\Ank{n}{k}\right)=\sum_{k\in K_n}\delta^n_k=\frac{\ell}{\gamma}\sum_{k\in K_n}(n+k)^{-\xi-1+1/\gamma}\\
&=&\frac{\ell}{\gamma}\sum_{k\in K_n}n^{-\xi-1+1/\gamma}+\hot=(\beta-\alpha)\ell n^{-\xi}+\hot\\
\end{eqnarray*}
Then by normalizing and summing on $n$ we get \eqref{S}:
\[
\sum_{n=\bar n}^N\pr\left(\An{n}\right)=\ell \sum_{n=\bar n}^N n^{-\xi}+\hot=\frac{1}{1-\xi}\ell N^{1-\xi}+\hot
\]
\end{proof}
Observe that the diverging condition implies that \eqref{S} diverges as $N\to\infty$, while the series converges if the condition is not satisfied.\\
\begin{lemma}
Fix $\bar n$, let $N$ go to infinity, then:
\[
\sum_{n=\bar n}^N \sum_{m=\bar n}^N \pr\left(\An{n}\An{m}\right)=\ell^2\frac{1}{(1-\xi)^2}N^{2-2\xi}+\hot
\]
\end{lemma}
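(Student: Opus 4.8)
The plan is to read the double sum as a second moment. With $\pr$ the normalised Lebesgue measure on $[\alpha,\beta]$ and $S_N\defeq\sum_{n=\bar n}^{N}\chi_{\An{n}}$,
\[
\sum_{n,m=\bar n}^{N}\pr(\An{n}\An{m})=\sum_{n=\bar n}^{N}\pr(\An{n})+2\sum_{\bar n\le n<m\le N}\pr(\An{n}\An{m}),
\]
and by \eqref{S} the diagonal term is $\bigo{N^{1-\xi}}=\hot$; here and below $\hot$ stands for a quantity that is $o(N^{2-2\xi})$, which makes sense because the diverging hypothesis forces $\xi<1$, hence $1-\xi<2-2\xi$ (the borderline $\xi=1$ being disposed of by replacing \eqref{S} and the estimate below by their evident logarithmic analogues, and the critical $\xi=1/\gamma$ by combining the overlapping argument of Step three with the non-overlapping one). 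Since $\pr(\An{n})\asymp n^{-\xi}$ gives $\sum_{n}\pr(\An{n})^{2}=\hot$, a second use of \eqref{S} yields $2\sum_{\bar n\le n<m\le N}\pr(\An{n})\pr(\An{m})=\bigl(\sum_{n}\pr(\An{n})\bigr)^{2}-\sum_{n}\pr(\An{n})^{2}=\ell^{2}(1-\xi)^{-2}N^{2-2\xi}+\hot$, so the lemma is equivalent to the \emph{quasi-independence} estimate $\sum_{n\ne m}\bigl|\pr(\An{n}\An{m})-\pr(\An{n})\pr(\An{m})\bigr|=\hot$. Moreover $\sum_{n,m}\pr(\An{n}\An{m})\ge\bigl(\sum_{n}\pr(\An{n})\bigr)^{2}$ holds unconditionally (Cauchy--Schwarz for $S_N$ on $([\alpha,\beta],\pr)$), so only the one-sided inequality ``$\le\hot$'' is really needed, and it is this that feeds the Borel--Cantelli--Erd\H{o}s--R\'enyi lemma.

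For the quasi-independence estimate, write $\psi_n(a)\defeq(n^{1/\gamma}+a)^{\gamma}$; then $\An{n}=\{a:\psi_n(a)\bmod 1\in W_n(a)\}$ for an arc $W_n(a)$ of relative length $\asymp\ell n^{-\xi}$ that depends only slowly on $a$ --- equivalently, $\An{n}=\bigsqcup_{k\in K_n}\Ank{n}{k}$ is the union of its components, evenly spaced with period $p_n\asymp n^{1/\gamma-1}$ and of lengths $\delta^n_k$ as in \eqref{e1}, with $|K_n|$ as in \eqref{e4}. Hence $\An{n}\cap\An{m}$ is the preimage, under the curve $\Psi_{n,m}\colon a\mapsto(\psi_n(a),\psi_m(a))\bmod\integers^{2}$ on $\mathbb{T}^{2}$, of a box of area $\asymp\ell^{2}n^{-\xi}m^{-\xi}$, and the desired bound $\pr(\An{n}\An{m})=\pr(\An{n})\pr(\An{m})+E_{n,m}$ with $\sum_{n\ne m}|E_{n,m}|=\hot$ says precisely that $\Psi_{n,m}$ equidistributes on $\mathbb{T}^{2}$ with a remainder that, after multiplication by the box area, is summable. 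This curve winds $\asymp n^{1-1/\gamma}$ times in the first coordinate and $\asymp m^{1-1/\gamma}$ times in the second, and its tangent slope $a\mapsto\bigl((m^{1/\gamma}+a)/(n^{1/\gamma}+a)\bigr)^{\gamma-1}$ runs over an interval of length $\asymp(m^{1/\gamma}-n^{1/\gamma})\,n^{-2/\gamma}$; the relevance of that variation is that, in the oscillatory integrals $\int e^{2\pi i(\nu\psi_n-\mu\psi_m)}$ expressing $E_{n,m}$, the second derivative of the phase at a near-resonance $\nu\psi_n'\approx\mu\psi_m'$ is $\asymp\nu\cdot(\text{slope variation})\cdot(\text{winding number})$.

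The pairs then split into \emph{dispersive} ones, for which the slope variation times the winding number exceeds a fixed positive power of $n$ --- when $\gamma>2$, i.e. in the non-overlapping regime relevant to the application, this holds for every pair with $m-n$ above a suitable power of $n$ --- and \emph{resonant} ones, for which the slope stays within $\bigo{1/(\text{winding})}$ of a rational of small denominator, the extreme being $m$ close to $n$, where the curve hugs the diagonal of $\mathbb{T}^{2}$. For dispersive pairs, van der Corput's second-derivative estimate applied to the above oscillatory integrals gives $|E_{n,m}|=\pr(\An{n})\pr(\An{m})\cdot\bigo{n^{-c}}$ for some $c>0$, and these errors sum to $\hot$. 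For resonant pairs $\Psi_{n,m}$ need not equidistribute on all of $\mathbb{T}^{2}$, but it does equidistribute along the closed geodesic it follows; combined with the elementary fact that a box of sides $n^{-\xi}\times m^{-\xi}$ in generic position with respect to a thickened closed geodesic meets it in a length whose average over the box position is again $n^{-\xi}m^{-\xi}$, this recovers the product $\pr(\An{n})\pr(\An{m})$ up to an error which --- after counting the resonant pairs (at most a power of $n$ for each $n$) and falling back on the crude bound $\pr(\An{n}\An{m})\le\min(\pr(\An{n}),\pr(\An{m}))$ on the genuinely exceptional set --- stays $\hot$. I expect the hard part to be exactly this last count: arranging that the resonant contribution sits below the $N^{2-2\xi}$ threshold uniformly in $\gamma>1$, in particular at the small-$\gamma$ end of the non-overlapping range. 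Everything else --- the dispersive estimate and the reductions of the first two paragraphs --- is routine.
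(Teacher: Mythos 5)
Your opening reductions are sound: rewriting the double sum via the second moment, disposing of the diagonal using \eqref{S}, and noting that Cauchy--Schwarz makes the lower bound $\sum_{n,m}\pr(\An{n}\An{m})\ge\bigl(\sum_n\pr(\An{n})\bigr)^2$ automatic are all correct, and reformulating the lemma as a quasi-independence estimate is the right target. The route you then propose is genuinely different from the paper's: you recast the problem as equidistribution of the curve $\Psi_{n,m}$ on $\mathbb{T}^2$ and plan a stationary-phase attack, whereas the paper never leaves the real line. After the symmetry reduction and the decomposition $\An{n}=\bigsqcup_k\Ank{n}{k}$, the paper fixes a single interval $\Ank{n}{k}$ and organizes all the components of all $\An{m}$, $m>n$, into \emph{waves} $B^{nk}_p$ passing through $I^n_k$; the intersection probability of $\Ank{n}{k}$ with the $p$-th wave is then read off directly from the spacing-versus-length ratio $\lambda^n_k(p)=\delta^r_{k+p}/\bar\Delta^r_{k+p}$ of Lemma \ref{ratio}, and the remaining work is summing this explicit quantity over $p$, $k$, $n$. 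No oscillatory integral, no dichotomy, and no exceptional-set count is needed, because the regular arithmetic structure of the components is used in full rather than being compressed into an equidistribution statement.

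The gap in your proposal is exactly the one you flag: the resonant pairs are not estimated. For $m$ close to $n$ the curve $\Psi_{n,m}$ hugs the diagonal, equidistribution on $\mathbb{T}^2$ fails outright, and you propose equidistribution along a closed geodesic plus the crude bound $\pr(\An{n}\An{m})\le\min(\pr(\An{n}),\pr(\An{m}))\asymp n^{-\xi}$ on a residual exceptional set. But for that crude contribution to be $\hot=o(N^{2-2\xi})$ the exceptional count per $n$ must be $o(N^{1-\xi})$, and in the non-overlapping regime $\xi=1-1/\gamma$ can be arbitrarily close to $1$ (as $\gamma\to\infty$), shrinking that exponent to $0$. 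Nothing in the proposal controls this; the dispersive bound $|E_{n,m}|=\pr(\An{n})\pr(\An{m})\cdot\bigo{n^{-c}}$ is asserted rather than derived, and the threshold between dispersive and resonant pairs is never pinned down. So the reduction to quasi-independence is correct, and the Fourier-analytic strategy would, if completed, give a softer and more general proof, but as written the argument has an acknowledged hole precisely where the paper's wave-counting computation delivers the sharp asymptotic $\ell^2(1-\xi)^{-2}N^{2-2\xi}$.
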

\begin{proof}
First of all, by symmetry we can consider just $m>n$ by paying a factor of 2 and some diagonal higher order terms; then we separate again $\An{n}$ in their respective components $\Ank{n}{k}$ 
\[
\sum_{n=\bar n}^N \sum_{m=\bar n}^N \pr\left(\An{n}\An{m}\right)=2 \sum_{n=\bar n}^N \sum_{m=n}^N \pr\left(\An{n}\An{m}\right)+\hot=
\]
\[
=2 \sum_{n=\bar n}^N\sum_{k\in K_n}\sum_{m= n}^N\sum_{l\in K_m} \pr\left(\Ank{n}{k}\Ank{m}{l}\right)
\]
\begin{figure}[!b]
\includegraphics[height=3cm]{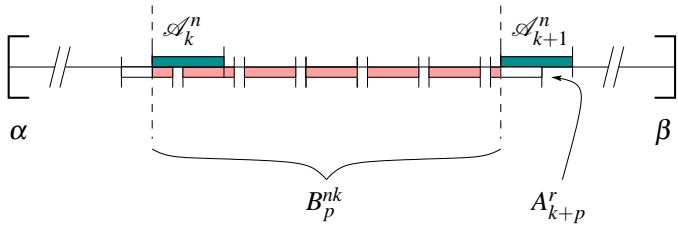}
\caption{Relation between \emph{waves} $B_p^{nk}$ and sets $\Ank{n}{k}$}\label{waves}
\end{figure}
Now let us focus on the last two sums, fixing momentarily $n$ and $k$. We can write the following equality:
\begin{equation}
\sum_{m= n}^N\sum_{l\in K_m} \pr\left(\Ank{n}{k}\Ank{m}{l}\right)=\sum_{p=1}^{P^n_k}\pr\left(\Ank{n}{k}B^{nk}_p\right)+\textrm{error term}
\label{AB}
\end{equation} 
where we define the sets $B^{nk}_p$ as the $p$-th \emph{wave} (see figure \ref{waves}):
\[B^{nk}_p\defeq\bigsqcup_{r\in\naturals}\Ank{r}{k+p}\cap I^n_k.
\]
$P^n_k$ is an appropriate number that is estimated by the next lemma and the error term is due to the fact that the last wave could be incomplete.
\begin{lemma}
The following estimate holds:
\[
P^n_k=\left(\frac{N^{1-{1/\gamma}}}{n^{1-{1/\gamma}}}-1\right)\cdot k+\bigo{1}.\]
\end{lemma}
\begin{proof}
As it follows from \eqref{e3}, the offset between $\Ank{r}{k+p}$ and $\Ank{r+1}{k+p}$ is $\bar\Delta^r_{k+p}$. $B^{nk}_p$ is the union over all $r$ such that $\Ank{r}{k+p}$ intersects $I^n_k$; for each fixed $p$ there is a wave of $\Ank{r}{k+p}$ that will be inside $I^n_k$ for some time and then leave the set. $P^n_k$ is the number of waves that will pass through $I^n_k$ in the time $N$. This means that:
\[
\left(N+k+P^n_k\right)^{1/\gamma}-N^{1/\gamma}\sim(n+k)^{1/\gamma}-n^{1/\gamma},
\]
that is:
\[
N^{1/\gamma-1}\left(k+P^n_k\right)\sim n^{1/\gamma-1}k\ \Rightarrow\ P^n_k=\left(\frac{N^{1-{1/\gamma}}}{n^{1-{1/\gamma}}}-1\right)\cdot k+\bigo{1}.
\]
\end{proof}
The error term in \eqref{AB} can be easily bounded by the order of $\pr\left(\Ank{n}{k}\right)$, as we miscount of at most 1 wave. 
We are left with the computation of $\pr\left(\Ank{n}{k}B^{nk}_p\right)$. Each $B^{nk}_p$ is the union of intervals that are $\bar\Delta^r_{k+p}$ apart and $\delta^r_{k+p}$ long. The ratio of such data gives the portion of the interval covered by each wave. 
\begin{lemma}
Let us introduce the parameter $\eta=(k+p)/k$. Then we have:
\[
\frac{\delta^r_{k+p}}{\bar\Delta^r_{k+p}}=
\ell\frac{\gamma}{\gamma-1}\frac{n^{1-\xi}}{k}\eta^\frac{1-\xi\gamma}{\gamma-1}+\hot\defeq\lambda^n_k(p).
\]
\label{ratio}
\end{lemma}
\begin{proof}
By definitions of $\delta_k^n$ and $\bar\Delta^n_k$ we have:
\[
\frac{\delta^r_{k+p}}{\bar\Delta^r_{k+p}}=\ell\frac{\gamma}{\gamma-1}\frac{(r+k+p)^{-\xi-1+1/\gamma}}{(k+p)(r^{1/\gamma-2})}.
\]
We need an estimate on $r$: acting as before for the computation of $P$ we get:
\[
r^{1/\gamma-1}\left(k+p\right)\sim n^{1/\gamma-1}k\ \Rightarrow\ r=n\left(\frac{k+p}{k}\right)^\frac{\gamma}{\gamma-1}.
\]
 We rewrite the previous expression as:
\[
\ell\frac{\gamma}{\gamma-1}\frac{(n\eta^{\gamma/(\gamma-1)}+\eta k)^{-\xi-1+1/\gamma}}{\eta k\cdot n^{1/\gamma-2}\cdot \eta^{(\gamma/(\gamma-1))(1/\gamma-2)}},
\]
that is:
\[
\ell\frac{\gamma}{\gamma-1}\frac{\left(\left(n+\eta^{-1/(\gamma-1)}k\right)\eta^{\gamma/(\gamma-1)}\right)^{-\xi-1+1/\gamma} }{k\cdot n^{1/\gamma-2}\cdot\eta^{-\gamma/(\gamma-1)}}+\hot=\ell\frac{\gamma}{\gamma-1}\frac{n^{1-\xi}}{k}\eta^\frac{1-\xi\gamma}{\gamma-1}+\hot
\]
\end{proof}
Therefore $\pr\left(\Ank{n}{k}B^{nk}_p\right)=\lambda^n_k(p)\cdot \pr(\Ank{n}{k})+\mathscr{O}(\bar\Delta^r_{k+p}/(\beta-\alpha))$, where the error term comes from the granularity of the set $B$. Therefore:
\begin{eqnarray*}
\sum_{p=1}^{P^n_k}\pr\left(\Ank{n}{k}B^{nk}_p\right)&=& \pr(\Ank{n}{k})\sum_{p=1}^{P^n_k}\lambda^n_k(p)+\hot\\
&=& \pr(\Ank{n}{k})\ell\frac{\gamma}{\gamma-1}\frac{n^{1-\xi}}{k}\sum_{p=1}^{P^n_k}\left(\frac{k+p}{k}\right)^{\frac{1-\xi\gamma}{\gamma-1}}+\hot\\
&=& \pr(\Ank{n}{k})\ell\frac{\gamma}{\gamma-1}\frac{n^{1-\xi}}{k^{\frac{(1-\xi)\gamma}{\gamma-1}}}\sum_{p=1}^{P^n_k}(k+p)^{\frac{1-\xi\gamma}{\gamma-1}}+\hot\\
&=& \pr(\Ank{n}{k})\ell\frac{1}{1-\xi}\frac{n^{1-\xi}}{k^{\frac{(1-\xi)\gamma}{\gamma-1}}}\left(\left(\left(k+P^n_k\right)^{\frac{(1-\xi)\gamma}{\gamma-1}}\right)-k^{\frac{(1-\xi)\gamma}{\gamma-1}}\right)+\hot\\
&=& \pr(\Ank{n}{k})\ell\frac{1}{1-\xi}\frac{n^{1-\xi}}{k^{\frac{(1-\xi)\gamma}{\gamma-1}}}k^{\frac{(1-\xi)\gamma}{\gamma-1}}\left(\left(\frac{N^{1-{1/\gamma}}}{n^{1-1/\gamma}}\right)^{\frac{(1-\xi)\gamma}{\gamma-1}}-1\right)+\hot\\
&=& \pr(\Ank{n}{k})\ell\frac{1}{1-\xi}n^{1-\xi}\left(\frac{N^{1-\xi}}{n^{1-\xi}}-1\right)+\hot
\end{eqnarray*}
Now we sum over $n$ and $k$:
\begin{eqnarray*}
\sum_{n=\bar n}^N\sum_{k\in K_n}\pr(\Ank{n}{k})\ell\frac{1}{1-\xi} n^{1-\xi}\left(\frac{N^{1-\xi}}{n^{1-\xi}}-1\right)+\hot=\\
\sum_{n=\bar n}^N\ell\frac{1}{1-\xi} n^{1-\xi}\left(\frac{N^{1-\xi}}{n^{1-\xi}}-1\right)\frac{1}{\beta-\alpha}\sum_{k\in K_n}\delta^n_k+\hot=\\
\sum_{n=\bar n}^N\ell^2\frac{1}{1-\xi} n^{1-2\xi}\left(\frac{N^{1-\xi}}{n^{1-\xi}}-1\right)+\hot=\\
\ell^2\frac{1}{1-\xi}\sum_{n=\bar n}^N\left(N^{1-\xi}n^{-\xi}-n^{1-2\xi}\right)+\hot=\\
\ell^2\frac{1}{2(1-\xi)^2}N^{2-2\xi}+\hot
\end{eqnarray*}
Recalling the factor 2 we had in the beginning of the estimate we get the desired result. 
\end{proof}
The last two lemmas prove that the weak independence condition is always satisfied independently of $\xi$. Therefore we have only to check the diverging condition. In the diverging regime we can conclude that the set $\A$ has full measure, as we found a subset of full measure; in the non-diverging regime we can as well conclude that $\A$ has zero measure, as it is contained in a zero-measure set.
\paragraph{Step five} \emph{The critical case $\xi=1/\gamma$}\\
For $\xi=1/\gamma$ we have that the overlapping condition is satisfied for small enough $a$, as $\delta^n_k$ grows bigger as $a$ decreases. As we notice from lemma \ref{ratio} we can find a critical $\bar a$ such that for $a<\bar a$ we have overlapping and for $a>\bar a$ we have no overlapping. 
\end{proof}
In the case in our consideration $\xi=1-1/\gamma$; this implies that the diverging condition is always satisfied and so we proved Theorem B up to the non-degeneracy condition. 
\begin{remark}
For this particular value of $\xi$ the overlapping condition reads $\gamma<2$; the critical case is therefore $\gamma=2$. In such case it is actually possible to explicitly compute a value $A\not\in\A$, whereas in general for $\gamma>2$ we still ignore if $\reals^+\setminus\A\not=\emptyset$.
\end{remark}
\begin{remark}
The technique we developed can be applied to $(-)$-orbits as well. The arithmetic condition relative to such orbits turns out to be more restrictive than the one for $(+)$-orbits; more precisely we get $\xi=2-2/\gamma$. This implies that the diverging condition is not anymore guaranteed. In fact it fails for $\gamma>2$, which means that such orbits appear for arbitrarily high energies for almost all sinusoidal motions only for $\gamma<2$.\\
Having studied all possible 2-periodic orbits, we notice how the conditions we stated are actually also necessary conditions for the presence of elliptic 2-periodic orbits. This implies the following interesting results:
\begin{itemize}
\item by the previous remark, for $\gamma=2$ there is at least a value of $A$ such that the system has only finitely many elliptic islands of period 2.
\item if $\gamma>2$ we have infinitely many $(-)$-elliptic islands only for a null-measure set. Notice however that lemma \ref{topological} does not depend on $\xi$, therefore such set is non-empty.  
\end{itemize} 
\label{fico}
\end{remark} 
Remark \ref{fico} allows also to prove the following proposition regarding the total measure of elliptic islands, given the following estimate of the measure of a single elliptic island: the measure of each $(+)$-elliptic island is of order $1/T'^3$. We are going to prove such estimate in proposition \ref{measureElliptic}.
\begin{proposition}
If $\gamma>4/3$ the total Lebesgue measure of $(+)$-elliptic islands of period 2 is finite. If $\gamma<4/3$ the total Lebesgue measure of elliptic islands is infinite.   
\label{islandmeasure}
\end{proposition}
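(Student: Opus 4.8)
The plan is to combine the per--island estimate of Proposition~\ref{measureElliptic} — that a single $(+)$-elliptic island of period $2$ located at height $T$ has Lebesgue measure of order $1/T'^{3}\asymp T^{-3(1-1/\gamma)}$ — with a count of how many such islands occur at a given energy scale, and then feed in the arithmetic analysis of Section~\ref{arith} with the exponent $\xi=1-1/\gamma$ proper to our problem (for which the diverging condition always holds). By Proposition~\ref{condition} the $n$-th candidate $(+)$-orbit has its two points near $T=n+\tfrac12$ and $T=m+\tfrac12$, and \eqref{e4} forces $m-n\sim\gamma a\,n^{1-1/\gamma}=o(n)$ with $a=2A$; hence both points satisfy $T\asymp n$, so $T'\asymp n^{1-1/\gamma}$ and the island, when present, has Lebesgue measure $\asymp n^{-3(1-1/\gamma)}$. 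By the necessity of the ellipticity condition (see Remark~\ref{fico}) these are \emph{all} the $(+)$-elliptic period-$2$ islands, so everything reduces to estimating, for a fixed motion $a$, the counting function $\mathcal N(N)\defeq\#\{\,n\le N\st a\in\An n\,\}$.

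The claim is that $\mathcal N(N)\asymp N^{1-\xi}=N^{1/\gamma}$. The trivial bound $\mathcal N(N)\le N$ does not suffice — it would only yield the threshold $\gamma=3/2$ — so one needs the sharp order. Its mean is already computed: averaging over $a\in[\alpha,\beta]$ and using \eqref{S},
\[
\int_\alpha^\beta\mathcal N(N)\,\deh a=(\beta-\alpha)\sum_{n\le N}\pr\left(\An n\right)=\frac{\beta-\alpha}{1-\xi}\,\ell\,N^{1-\xi}+\hot .
\]
To promote this to an almost-everywhere statement one upgrades the qualitative Borel--Cantelli--Erd\H{o}s--R\'enyi lemma used for Theorem~B to a quantitative (second moment) version: the weak independence estimate of the same section gives $\int_\alpha^\beta(\mathcal N(N)-\mathbb E\,\mathcal N(N))^2\,\deh a=o((\mathbb E\,\mathcal N(N))^2)$ along a dyadic sequence of values of $N$, and then monotonicity of $\mathcal N$ plus Borel--Cantelli yields $\mathcal N(N)\asymp N^{1/\gamma}$ for Lebesgue--a.e.\ $a$. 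In the overlapping regime $\xi<1/\gamma$, that is $\gamma<2$ — which contains the whole relevant range of $\gamma$ around $4/3$ — the two-sided bound holds for \emph{every} $a$ and follows directly from the geometry of Step three: for each fixed $k\in K_n$ the intervals $\Ank r k$ slide monotonically across $[\alpha,\beta]$ at speed $\bar\Delta^r_k$ with width $\delta^r_k$, so a prescribed $a$ belongs to $\Ank r k$ for a single block of $\asymp\delta^r_k/\bar\Delta^r_k$ consecutive indices $r$; summing over $k\in K_n$, $n\le N$, with \eqref{e1}, \eqref{e3}, \eqref{e4} and the relation $r\asymp k^{\gamma/(\gamma-1)}$ one gets $\asymp N^{1/\gamma}$, the counting being essentially exact because for fixed $n$ the windows $\mathscr G_{a,n+k}$ hit by distinct $k$ are disjoint.

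Granting the claim, distinct $2$-periodic orbits have disjoint islands and each good index $n$ has $O(1)$ partners $m$, so the total measure is comparable to $\sum_{\text{islands}}n^{-3(1-1/\gamma)}$. Writing the island indices as $n_1<n_2<\cdots$, the asymptotics $\mathcal N(N)\asymp N^{1/\gamma}$ give $n_j\asymp j^{\gamma}$, whence
\[
\sum_{j\ge1}n_j^{-3(1-1/\gamma)}\asymp\sum_{j\ge1}j^{-3\gamma(1-1/\gamma)}=\sum_{j\ge1}j^{-3(\gamma-1)},
\]
which converges precisely when $3(\gamma-1)>1$, i.e.\ $\gamma>4/3$, and diverges when $\gamma<4/3$. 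Equivalently, the dyadic block $T\in[2^{\nu},2^{\nu+1}]$ carries $\asymp 2^{\nu/\gamma}$ islands of measure $\asymp 2^{-3\nu(1-1/\gamma)}$ each, contributing $\asymp 2^{\nu(4/\gamma-3)}$. For $\gamma\ge2$ one does not even need the sharp count: $4/\gamma-3\le-1$ and the crude bound $\mathcal N(N)\le N$ already makes $\sum_n n^{-3(1-1/\gamma)}$ converge.

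The heart of the matter — and the expected main obstacle — is the sharp counting estimate $\mathcal N(N)\asymp N^{1/\gamma}$. Theorem~B only required $\mathcal N(N)\to\infty$, whereas the finite/infinite dichotomy hinges on the exact exponent $1-\xi$: a smaller lower bound would fail to force divergence for $\gamma$ just below $4/3$, and the trivial upper bound fails to force convergence for $\gamma$ just above $4/3$. The second load-bearing input is the per-island measure $\asymp 1/T'^{3}$ of Proposition~\ref{measureElliptic}; everything else is bookkeeping with the expansions \eqref{e1}--\eqref{e4} and the first-moment identity \eqref{S}.
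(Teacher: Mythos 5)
Your proof is correct and follows essentially the same strategy as the paper: both combine the per-island measure estimate $\asymp T'^{-3}$ from Proposition~\ref{measureElliptic} with a sharpened count of indices satisfying the arithmetic condition, the trivial count $\le N$ giving only the threshold $\gamma=3/2$. Your counting function $\mathcal{N}(N)\asymp N^{1/\gamma}$ and the reparametrization $n_j\asymp j^{\gamma}$ are algebraically equivalent to the paper's branch-by-branch bookkeeping (roughly $k^{(2-\gamma)/(\gamma-1)}$ islands per branch $k$, each of measure $\asymp k^{-3}$, summed over $k$), and the second-moment Borel--Cantelli detour you sketch is unnecessary since, as you yourself observe, the critical range $4/3\le\gamma\le 3/2$ lies entirely in the overlapping regime $\gamma<2$ where the direct geometric count suffices.
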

\begin{proof}
We can obtain a rough upper bound to the total measure of elliptic island by summing the measure of a single island over all possible islands that can appear. As there could be one for each $T\sim n+1/2$ we have the following estimate:
\[
\textrm{Leb(islands)}<\textrm{Const}\cdot\sum_{n=1}^{\infty}\frac{1}{T_n'^3}=\textrm{Const}\cdot\sum_{n=1}^{\infty}n^{-3(1-1/\gamma)}.
\] 
The series converges for $\gamma>3/2$. To sharpen the estimate we need to take into account that for some of the $n$ we do not have an elliptic island. For $\gamma<2$ this may be estimated quite easily, as we have the following expansion:
\[
(n^\frac{1}{\gamma}+A)^\gamma=n+\gamma A n^{1-1/\gamma}+o(1).
\]
From the previous expansion it is clear that: 
\[
\left\{(n^\frac{1}{\gamma}+A)^\gamma\right\}= \left\{\gamma A n^{1-\frac{1}{\gamma}}\right\}+o(1).
\]
such function has an infinite number of branches, let us denote such branches by $k$. Each branch will start at $n_k\sim k^{\gamma/(\gamma-1)}$. The arithmetic condition can be expressed in terms of $k$ in the following way:
\[ 
\left\{n^{1-\frac{1}{\gamma}}\right\}< \bigo{k^{-1}}.
\]
Figure \ref{arithmetic} pictures this condition; for each branch $k$ we have elliptic islands until the fractional part grows too large and the arithmetic condition no longer holds true.
\begin{figure}[!hb]
\includegraphics[width=6cm,angle=270]{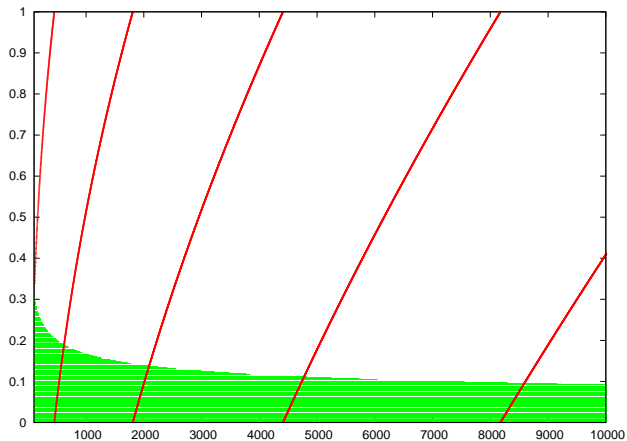}
\caption{The arithmetic condition for $\gamma<2$} 
\label{arithmetic}
\end{figure} 
Given this fact it is easy to estimate the number of islands belonging to the $k$-th branch. We compute the derivative of $n^{1-\frac{1}{\gamma}}$ for $n_{k+1}$, obtaining a linear lower bound on the growth of $n^{1-\frac{1}{\gamma}}$ in the $k$-th branch: 
\[
\left\{n^{1-\frac{1}{\gamma}}\right\}> n_{k+1}^{-\frac{1}{\gamma}}\cdot(n-n_k) \sim (k+1)^{-\frac{1}{\gamma-1}}\cdot(n-n_k).
\] 
The smallest $n$ for which the arithmetic condition fails can therefore be bound from above by requiring:
\[
(k+1)^{-\frac{1}{\gamma-1}}\cdot(n-n_k)<(k+1)^{-1} \textrm{ i.e. } n-n_k <(k+1)^{\frac{2-\gamma}{\gamma-1}}.
\]
therefore for the $k$-th branch we get at most $\bigo{k^{\frac{2-\gamma}{\gamma-1}}}$ elliptic islands. We now multiply this number by the measure of such islands and sum over all branches $k$ to get the total measure:
\[
\textrm{Leb(islands)}<C\cdot\sum_{k=1}^{\infty}k^{\frac{2-\gamma}{\gamma-1}}k^{-3},
\]
that converges for $\gamma>4/3$. Notice that along the same lines we can get a lower bound of the same order, that means that the total measure of islands diverge for $\gamma\leq 4/3$.
\end{proof} 
\subsection{Non-degeneracy condition}
According to general KAM theory, there exists a stability island around each point of a periodic orbit provided that generic non-resonance and non-degeneracy conditions are satisfied. Following \cite{laz}:
\begin{definition}
An elliptic fixed point $p$ of a two-dimensional symplectic diffeomorphism $f$ is said to be \emph{general elliptic} if:
\begin{itemize}
\item the multiplier $\lambda_p$ is such that $\lambda_p^k\not =1$ for $k=1,2,3,4$ (non-resonance up to order 4);
\item the Birkhoff normal form is non-degenerate, i.e. a quantity that can be written in terms of derivatives up to fourth order is different from zero (see below).
\end{itemize}
\end{definition}
\begin{theorem}[KAM]
If $p$ is general elliptic, then it is stable, i.e. for each neighbourhood $U$ of $p$ there exist another neighbourhood $V$ such that $\fa k,\ F^k(V)\subset U$.
\end{theorem}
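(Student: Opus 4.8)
The plan is to reduce $F$ to its Birkhoff normal form near $p$ and then apply Moser's invariant curve theorem. Since $\phi$ is a sinusoid, $F$ is real-analytic on the half-cylinder, hence arbitrarily smooth near $p$; choose symplectic coordinates $(x,y)$ centred at $p$ in which $dF(p)$ is rotation by $2\pi\omega$, where $e^{2\pi i\omega}=\lambda_p$, and set $z=x+iy$, so that $z\mapsto\lambda_p z+\bigo{|z|^2}$. The hypothesis $\lambda_p^k\neq 1$ for $k=1,2,3,4$ guarantees both that the homological equations for eliminating the non-resonant monomials of degree $2$ and $3$ are solvable (their denominators being $\lambda_p^{\,m}-1$ with $m\neq 0$, $|m|\le 3$) and that $\omega$ stays away from $1/4$, which is needed in the twist-theorem step. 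Performing finitely many polynomial symplectic changes of variables leaves only the resonant term $|z|^2 z$, and in action--angle variables $(\theta,I)$ with $I=|z|^2/2$ this is the normal form
\[
\theta\mapsto\theta+\omega+\tau I+\bigo{I^2},\qquad I\mapsto I+\bigo{I^2},
\]
in which $\tau$ is the Birkhoff invariant, and the non-degeneracy hypothesis says exactly that $\tau\neq 0$.

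Now restrict the normalised map to a thin annulus $\{I\in[\rho,2\rho]\}$ and rescale by $I=\rho(1+s)$, $s\in[0,1]$: the result is a perturbation of size $\bigo{\rho}$, in a suitable analytic norm, of the integrable twist map $(\theta,s)\mapsto(\theta+\omega+\tau\rho\,s,\,s)$, whose twist $\tau\rho$ does not vanish. Moser's twist theorem then produces, for every small enough $\rho$, an essential $F$-invariant closed curve $\mathcal{K}_\rho\subset\{I\in[\rho,2\rho]\}$ on which the dynamics is conjugate to a rotation by a Diophantine number (available because the twist makes the unperturbed rotation number sweep an interval). As $\rho\to 0$ these invariant curves accumulate on $p$.

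Lyapunov stability follows at once. Given a neighbourhood $U$ of $p$, pick $\rho$ so small that the closed topological disc $D_\rho$ bounded by $\mathcal{K}_\rho$ is contained in $U$, and set $V=\mathrm{int}\,D_\rho$. Since $\mathcal{K}_\rho$ is $F$-invariant and $F$ is an orientation-preserving homeomorphism near $p$ with $p\in V$, $F$ maps $D_\rho$ onto itself, so $F^{k}(V)\subset V\subset U$ for all $k\in\integers$. I expect the main technical obstacle to be the quantitative bookkeeping showing that after the normalising transformations the perturbation of the twist map really is of size $\bigo{\rho}$ on the rescaled annuli uniformly as $\rho\to 0$, so that Moser's theorem applies for all small $\rho$; this is precisely what the general elliptic hypothesis secures, and the references \cite{laz}, \cite{Rafa} carry out the details. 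The explicit expression of $\tau$ in terms of the fourth-order jet of $F$, and the verification that $\tau\neq 0$ for the period-$2$ orbits constructed above, are the business of the next subsection.
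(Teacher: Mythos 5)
The paper does not prove this statement: it is quoted as a known KAM theorem, following Lazutkin \cite{laz} and de la Llave \cite{Rafa}, and is used as a black box once general ellipticity has been verified for the period-$2$ orbits. Your Birkhoff-normal-form-plus-Moser-twist-theorem sketch is the standard argument found in those references and is essentially correct; the only small slip is that the small denominators $\lambda_p^m-1$ arising when eliminating the cubic non-resonant monomials (e.g.\ $\bar z^3$) have exponents with $|m|$ up to $4$, not $3$, though this is in any case covered by your stated hypothesis $\lambda_p^k\neq 1$ for $k\le 4$.
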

Stability around the point implies the presence of an elliptic island. As noted before, the construction we described yields elliptic points with multiplier which can be chosen to belong to some prescribed interval; this implies that we can a priori avoid resonances. The non-degeneracy condition can be explicitly computed by the following procedure:
\begin{itemize}
\item We perform a linear change of coordinates such that the differential of the map $\deh F$ at the fixed point is a rotation in the new coordinate $(\xi,\eta)$ where $(\xi=0,\eta=0)\mapsto p$.
\item We compute the Taylor expansion coefficients up to order four (excluded) in the coordinates $u=\xi+i\eta$ and $\bar u$ obtaining the following expression:
\[
u\mapsto \lambda_pu+A_3u^2+A_4u\bar u+A_5 {\bar u}^2+A_6 u^3+A_7 u^2 \bar u+A_8 u{\bar u}^2+ A_9 {\bar u}^3+\bigo{4}.
\]
\item We compute the following expression:
\[
\omega=-i\left\{i \Im(\bar \lambda_p A_7)+3|A_3|^2\frac{\lambda_p+1}{\lambda_p-1}+|A_5|^2\frac{\lambda_p^3+1}{\lambda_p^3-1} \right\}.
\]
\end{itemize}
The non-degeneracy condition requires that $\omega\not =0$. The coefficients $A_i$ contain derivatives of $T$ up to order 3 and derivatives of $\phi$ up to order 4. As for high energies we have $T'_i\gg 1$, instead of computing all $A_i$ exactly, we perform an expansion in terms of powers of $T'_i$ and compute the highest order non-zero term, taking into account that ellipticity implies $\ddot\phi T_i'=\nu_i\sim 1$ (i.e. condition \eqref{sufficient}). We find by direct computation\footnote{Computations were made using the software Mathematica. A printout can be found at \url{http://www.math.umd.edu/~jacopods/bnf.pdf}} that the highest nonzero term in $\omega$ is of order ${T_i'}^3$. As a further simplification we note that we have $T'_2=T'_1+\bigo{T''}$; this implies that if we compute $\omega$ by setting $T_1'=T_2'\defeq T'$ and find a quantity bounded away from zero in this limit, it will be bounded away from zero also for all $T_i'$ sufficiently large.
The coefficient of order 3 turns out to be the following polynomial in $\nu=\ddot\phi T'\in (-1,0)$:
\[
\omega_3=\frac{(2+\nu){\dot{\ddot{\phi}}}^2}{64{D^6}/{T'^3}}\left[2(\nu^2+4\nu+6)+\left(-i\nu(2+\nu)^2\left(3\frac{\lambda_p+1}{\lambda_p-1}+\frac{\lambda_p^3+1}{\lambda_p^3-1}(3+\nu)^2\right)\right)\right].
\]
where 
\[ D=\sqrt{2\left(1-\textrm{Re}\left({\lambda_p\cdot\partial_z\left(F^2(p)\right)}\right)\right)}.\] is of order $\sqrt{T'}$ and $\partial_z\left(F^2(p)\right)$ is the holomorphic derivative of $F^2$ with respect to $z=t+iv$. Notice that the fractions involving the multiplier $\lambda_p=\exp(i\theta_P)$ give respectively $i\cdot\cot(\theta_p/2)$ and $i\cdot\cot(3\theta_p/2)$. Of course $\theta_P$ is not independent of $\nu$, but recall that since we can control the multiplier, we can assume both cotangents to be bounded away from zero and positive. It is easy to check that for $\nu\in(-1,0)$ such polynomial is bounded away from zero, as each term in the sum is positive.
This is enough to establish the presence of an elliptic island around each periodic point found by the construction, proving Theorem B.
Notice that the expression for $\omega_3$ does not involve derivatives of $\varphi$ of order higher than 3 and derivatives of order 2 and higher of $T$ as such terms appear only in terms of lower order in the expansion in $T'$ (see below).
To conclude this section we now prove an estimate regarding the size of the elliptic islands we obtained.
\begin{proposition} Consider a 2-periodic orbit of type $(+)$, given by the points $(t_1,v_1)$ and $(t_2,v_2)$ and such that the multiplier is bounded away from resonances of order up to four; we define $T'=\left(T'(v_1)+T'(v_2)\right)/2$. Elliptic islands of type $(+)$ around such points have area of order $\left(T'\right)^{-3}$ for big enough $v$.
\label{measureElliptic}
\end{proposition}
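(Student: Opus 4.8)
The plan is to read off the size of the island from the Birkhoff normal form of $F^{2}$ at a point $p$ of the $2$-periodic orbit, using the non-degeneracy established in the previous subsection: the first Birkhoff invariant $\omega$ is non-zero and, in the coordinates that linearise $\deh F^{2}$ to a rotation, of order $T'^{3}$, while the multiplier $\lambda_{p}=e^{i\theta_{p}}$ may be kept away from all resonances of order $\le 4$ (here and below $T'_{1}\approx T'_{2}=:T'$, as in the previous subsection). The one bookkeeping point the whole argument rests on is that the linear change of variables putting $\deh F^{2}(p)$ into a rotation is symplectic, hence \emph{area-preserving}, so the island has the same area in the $(t,v)$ variables as in the normal-form variables; the value $(T')^{-3}$ will then appear because that normal form is only valid on a ball of radius $\asymp (T')^{-3/2}$.

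First I would perform the linear symplectic change $M$ carrying $\deh F^{2}(p)$ into the rotation $R_{\theta_{p}}$ by the angle $\theta_{p}$. Because the off-diagonal entries of $\deh F_{1}\,\deh F_{2}$ are of order $T'$ and $1/T'$ respectively, the invariant ellipses of $\deh F^{2}(p)$ have axis ratio $\asymp T'$, so $M$ compresses the $t$-direction by a factor $\asymp\sqrt{T'}$ and stretches the $v$-direction by $\asymp\sqrt{T'}$ — precisely the factor $D\asymp\sqrt{T'}$ appearing in the formula for $\omega_{3}$. I would then estimate the Taylor coefficients of $F^{2}$ in the normalised coordinate $u=\xi+i\eta$: in $(t,v)$ the degree-$j$ part of the $v$-equation is dominated by the $(j+1)$-st derivative of $\phi$ times $(\delta t+T'\delta v)^{j}$; since $\delta t+T'\delta v$ becomes $\asymp\sqrt{T'}(\xi+\eta)$ while the equation itself gets rescaled by $\sqrt{T'}$ under $M$, the resulting degree-$j$ coefficient in $(\xi,\eta)$ is $\asymp T'^{(j+1)/2}$. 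In particular the quadratic coefficients are $\asymp T'^{3/2}$ and the cubic ones $\asymp T'^{2}$, which is what makes the first Birkhoff invariant $\asymp|A_{3}|^{2}\asymp T'^{3}$, in agreement with the computation of $\omega_{3}$ above.

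Next I would rescale $u=\rho\,w$ with $\rho=c\,T'^{-3/2}$ for a small fixed constant $c$. On the unit $w$-ball the quadratic part of $F^{2}$ is of size $\asymp T'^{3/2}\rho=\bigo{c}$, so one Birkhoff normalisation step is a genuine near-identity change of variables there; after it $F^{2}$ has the form $w\mapsto R_{\theta_{p}}e^{i\omega\rho^{2}|w|^{2}}w+\bigo{|w|^{4}}$ with twist $\omega\rho^{2}\asymp c^{2}$ bounded away from $0$ by non-degeneracy and with remainder of relative size $\bigo{c}$ on a definite ball $\{|w|<\epsilon_{0}\}$. The quantitative twist theorem underlying the KAM statement invoked above then yields $F^{2}$-invariant curves around $p$ at $w$-radius $\epsilon_{0}$, which enclose an island of $w$-area bounded below, hence of $u$-area $\gtrsim\rho^{2}\asymp T'^{-3}$; since $M$ preserves area, the same lower bound holds for the area in $(t,v)$.

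Finally, for the matching upper bound I would argue that no $F^{2}$-invariant curve encircling $p$ reaches $u$-radius much larger than $T'^{-3/2}$, confining the island to a set of area $\bigo{T'^{-3}}$. One route is to produce, on the annulus $\{C\,T'^{-3/2}\le|u|\le\epsilon\}$, an invariant expanding cone field for $\deh F^{2}$ — using that there the quadratic nonlinearity $\asymp T'^{3/2}|u|$ dominates the linear rotation, together with the fact that off $p$ the trace $\tfrac{1}{2}\textrm{Tr}\,\deh F^{2}$ moves at rate $\asymp T'$ in the $t$-direction and $\asymp T'^{2}$ in the $v$-direction — whereupon the usual index argument rules out a closed invariant curve inside that annulus. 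Together with the lower bound this gives island area $\asymp (T')^{-3}$, uniformly for large $v$. I expect this last step to be the main obstacle: the lower bound is a routine application of the twist theorem once the anisotropic rescaling of the first two steps is set up, whereas turning the heuristic ``the quadratic nonlinearity takes over at scale $T'^{-3/2}$'' into a genuine confinement of the stable set around $p$ is delicate, since immediately outside that ball $\deh F^{2}$ is a large and a priori uncontrolled perturbation of a rotation.
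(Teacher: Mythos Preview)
Your overall plan --- symplectic normalisation of $\deh F^{2}$ to a rotation, then an isotropic rescaling by $T'^{-3/2}$ --- is exactly the paper's. There is, however, a concrete error in your Taylor-coefficient estimate. The heuristic ``degree-$j$ part $\approx \phi^{(j+1)}(\delta t + T'\delta v)^{j}$'' treats $F^{2}$ as if it were a single application of $F$; it misses the cross terms produced by the composition $F\circ F$. Concretely, the contribution $2\,Q_{2}\bigl(L_{1}x,\,Q_{1}(x,x)\bigr)$ (quadratic part of the second $F$ fed the linear--times--quadratic output of the first) gives a cubic term in the $v$-equation of order $T'(\delta t+T'\delta v)^{3}$, which after your normalisation $M$ becomes $\asymp T'^{3}$ in $(\xi,\eta)$, not $T'^{2}$. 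The paper verifies $A_{6},\dots,A_{9}\sim T'^{3}$ by direct computation, and this is consistent with $\omega_{3}\sim T'^{3}$ only because the $A_{7}$ contribution and the $|A_{3}|^{2},|A_{5}|^{2}$ contributions are of the \emph{same} order.

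This error is not fatal to the strategy --- $T'^{3}\cdot(T'^{-3/2})^{2}=O(1)$, so the rescaled cubic is still bounded --- but it means your heuristic cannot be trusted for $j\geq 3$, and you have no argument at all for the tail $j\geq 4$. The paper's main technical content is precisely filling this gap: it proves a uniform bound $A^{(n)}\lesssim T'^{\,3+(n-1)/2}$ for all $n$, by writing $a_{1},a_{2}$ (the holomorphic/antiholomorphic parts of $\deh F^{2}$) as essentially a product $\alpha\cdot\alpha$ of the corresponding quantities for a single $F$, and then checking that in $(u,\bar u)$ each $\partial_{u}$ or $\partial_{\bar u}$ raises the $T'$-order of $\alpha_{i}$ by exactly $1/2$ (because $\partial_{u}\sim T'^{1/2}\partial_{t}+T'^{-1/2}\partial_{v}$ while $\partial_{t}\alpha\sim T'$ and $\partial_{v}\alpha\sim T'^{2}$). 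Together with the sharp values at $n=2,3$ this shows that after rescaling by $T'^{-3/2}$ \emph{every} nonlinear coefficient is $O(1)$, so KAM applies with constants independent of $T'$.

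On the upper bound: the paper is terse --- once the rescaled map has all coefficients uniformly $O(1)$ and twist bounded away from zero, it simply declares the island to have $O(1)$ area in $w$. Your cone-field proposal is a more honest attempt at this half, and you are right to flag it as the delicate step; the paper does not supply an independent argument of that kind.
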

\begin{proof}
We consider the map $F^2$ expressed in terms of the variables $u,\bar u$ defined above, close to a periodic point $p$; for simplicity we assume $p=0$. Recall that the variables $u$ and $\bar u$ are related to $z=t+iv$ and $\bar z$ by a linear symplectic transformation, i.e. $z=b_1u+b_2\bar u$ and $b_1\bar b_1-b_2\bar b_2=1$.
In such variables one can write the map as follows:
\[
u\mapsto A_1(u,\bar u)u + A_2(u,\bar u)\bar u \textrm{ where } A_1(0,0)=\lambda_p,\ A_2(0,0)=0.
\]
One can obtain all terms of the Taylor polynomial of $F^2$ in such variables by appropriately differentiating the functions $A_1$ and $A_2$ with respect to $u$ and $\bar u$. We claim that the term of order $n$ is of order at most $(T')^{3+(n-1)/2}$. By direct computation we find that 
\[
A_3, A_4, A_5 \sim (T')^{3/2}\quad A_6,A_7,A_8,A_9 \sim (T')^3.
\]
This, along with the estimate we claim, is sufficient to prove that the area of the elliptic island is of the required order. In fact, one can perform a rescaling $u\mapsto \Lambda u$, obtaining the following (symbolic) expression:
\[
u\mapsto \lambda_p u+\sum_{n=2}^\infty A^{(n)}\Lambda^{n-1}\{u,\bar u\}^n.
\]Therefore by choosing $\Lambda$ such that $A^{(n)}\Lambda^{n-1}\lesssim 1$ we obtain that the linearized part is dominant in a disk of radius of order $\Lambda$ around the origin. The explicit computations and the claim allows us to take $\Lambda\sim (T')^{-3/2}$; the result follows by recalling that the map $z\mapsto u$ is symplectic and therefore it preserves the area form.\\
We are now left with the proof of the claim, i.e. to prove that $A^{(n)}\lesssim(T')^{3+(n-1)/2}$. First we obtain by direct computation a relation between the coefficients $a_i$ of the Taylor expansion in terms of $z,\bar z$ and the coefficients $A_i$ of the expansion in terms of $u,\bar u$  
\[
z\mapsto a_1(z,\bar z) z +a_2(z,\bar z) \bar z\qquad z=b_1u+b_2\bar u.
\] 
\[
A_1=b_1\bar b_1 a_1 + \bar b_1 \bar b_2 a_2 - b_1 b_2 \bar a_2 - b_2 \bar b_2 \bar a_1\qquad A_2=\bar b_1 b_2 a_1 + \bar b_1 \bar b_1 a_2 - b_2 b_2 \bar a_2 - \bar b_1 b_2 \bar a_1.
\]
We can obtain all coefficients $A^{(n)}$ by applying the relative differential operator to the appropriate  $A_i$; the key fact to notice is that, $b_i$ being constant, the differential operator will operate only on the $a_i$. One can express $\partial_u$ and $\partial_{\bar u}$ in terms of $\partial_t$ and $\partial_{v}$ in the following way:
\[
\partial_u=\frac{b_1+\bar b_2}{2}\partial_t + i\frac{\bar b_2-b_1}{2}\partial_v \qquad \partial_{\bar u}=\frac{\bar b_1+b_2}{2}\partial_t - i\frac{ b_2-\bar b_1}{2}\partial_v.
\]
We are going to explicitly compute the coefficients of $\partial_t$ and $\partial_v$ to check that they are respectively of order $T'^{(1/2)}$ and $T'^{-(1/2)}$. Then we find a general expression for the order of arbitrary derivatives of $a_1$ and $a_2$.
Explicit calculations provide the following values for $a_i$:
\begin{eqnarray*}
a_1&=&\left(1+2\ddph(T'_1+T'_2)+2\ddph^2T'_1T'_2\right)+i\left(2\ddph+2\ddph^2T'_1-\frac{T'_1+T'_2}{2}-\ddph T'_1T'_2\right)\\
a_2&=&\left(-2\ddph T'_2-2\ddph^2T'_1T'_2\right)+i\left(2\ddph+2\ddph^2 T'_1+\frac{T'_1+T'_2}{2}+\ddph T'_1T'_2\right)
\end{eqnarray*}
Where recall that we defined $T'_i=T'(v_i)$; by defining as before
\[ D=\sqrt{2(1-\Re(\lambda_p\cdot a_1(0))}\sim T'^{(1/2)},\] and $T'=(1/2)(T'_1+T'_2)$ we compute $b_1$ and $b_2$:
\begin{eqnarray*}
b_1&=&\frac{1}{D}\left(2\ddph+2\ddph^2T'_1-T'-\ddph T'_1T'_2+\Im\lambda_p\right)\\
b_2&=&\frac{1}{D}\left(-2\ddph-2\ddph^2 T'_1-T'-\ddph T'_1T'_2-i\left(2\ddph T'_2+2\ddph^2 T'_1T'_2\right)\right).
\end{eqnarray*}  
From the above expression, recalling that $\ddph T'\sim 1$, we notice that each $b_i$ is of order $T'^{(1/2)}$. Now we compute the coefficients that appear in the change of variables $(\partial_t,\partial_v)\mapsto(\partial_u,\partial_{\bar u})$:
\begin{eqnarray*}
\frac{b_1+\bar b_2}{2}&=&\frac{1}{2D}\left(-2T'-2\ddph T'_1T'_2+\Im\lambda_p+i\left(2\ddph T'_2+2\ddph^2T'_1T'_2\right)\right)\\
\frac{\bar b_2 - b_1}{2}&=&\frac{1}{2D}\left(-4\ddph-4\ddph^2 T'_1-\Im\lambda_p+i\left(2\ddph T'_2+2\ddph^2T'_1T'_2\right)\right).
\end{eqnarray*}
From the expression we notice than the  coefficient of $\partial_t$ is of order $T'^{(1/2)}$ and the coefficient of $\partial_v$ is of order $T'^{-(1/2)}$. This reflects the fact that the symplectic transformation stretches along the $v$ direction and contracts along the $t$ direction in order to put the differential in normal form. This in turn implies that the shape of the invariant curves is elongated in the $t$ direction (as we can notice in figure \ref{imageIslands}).
We are left with computing the order of magnitude of derivatives of $a_1$ and $a_2$. It is convenient to notice that $d(F^2)$ is close to a square of a matrix, i.e. if we write $T'_1=T'(1-\delta)$ and $T'_2=T'(1+\delta)$ we obtain:
\[
\alpha_1\defeq 1+T'\ddph+i(\ddph-T'/2)\qquad \alpha_2\defeq -T'\ddph+i(\ddph+T'/2). 
\]
\begin{eqnarray*}
a_1&=&\left(\alpha_1\alpha_1+\alpha_2\bar\alpha_2\right)-2\ddph^2T'^2\delta^2-2i\ddph^2T'\delta+i\ddph T'^2\delta^2\\
a_2&=&\left(\alpha_2\alpha_1+\alpha_2\bar \alpha_1\right)-2\ddph T'\delta +2\ddph T'^2\delta^2-2i\ddph^2T'\delta-i\ddph T'^2\delta^2
\end{eqnarray*} 
As $\delta$ is of order 1 (and limiting to 0 as $T'\to\infty$), the error term is of order at most $T'$, whereas the main term is of order $T'^2$. Now we differentiate $\alpha_1$ and $\alpha_2$ with respect to $t$ and $v$:
\[
\partial_t\alpha_1=\dddph(T'+i)\qquad \partial_t\alpha_2=\dddph(-T'+i)
\]
\[
\partial_v\alpha_1=T'\partial_t\alpha_1+T''(\ddph-i/2) \qquad \partial_v\alpha_2=T'\partial_t\alpha_2+T''(-\ddph+i/2).
\]
In order to obtain an upper bound on such derivatives, we will consider $\phi^{(n)}\sim 1$ regardless of the fact that even derivatives will be of order $T'^{-1}$. To this extent, we observe that all terms containing second (and higher) derivatives of $T$ will appear in terms of lower order than the dominant $T'$ for $\partial_t$ and $T'^2$ for $\partial_v$. By direct inspection, the same statement is true for terms containing $\delta$ in the expression for $a_i$ (in fact $\delta\sim T''/T'$). Therefore if we restrict to the maximum order:
\[
\left.\partial_t^k\partial_v^l\alpha_1\right|_{max}=T'^l\phi^{(k+l+2)}(T'+i)\qquad\left.\partial_t^k\partial_v^l\alpha_2\right|_{max}=T'^l\phi^{(k+l+2)}(-T'+i).
\] 
Now recall that we were to compute derivatives with respect to the $(u,\bar u)$ variables and as such we should recall that the coefficients in front of $\partial_t$ and $\partial_v$ are of order respectively $T'^{(1/2)}$ and $T'^{(-1/2)}$. This means that we obtain:
\[
\partial_u^k\partial_{\bar u}^l\alpha_1\lesssim T'^{(k+l)/2+1}\qquad\partial_u^k\partial_{\bar u}^l\alpha_2\lesssim T'^{(k+l)/2+1},
\] 
which in turn implies:
\[
\partial_u^k\partial_{\bar u}^la_1\lesssim T'^{(k+l)/2+2}\qquad\partial_u^k\partial_{\bar u}^la_2\lesssim T'^{(k+l)/2+2}. 
\]
Therefore we obtain the required estimate for $A^{(n)}$, i.e.:
\[A^{(n)}\lesssim T' \partial^{(n-1)} a_i\lesssim T'^{3+(n-1)/2},\]
which concludes the proof.\end{proof}
\label{nondegeneracy}
\begin{acknowledgements}
I would like to thank my advisor Dmitrij Dolgopyat for introducing me to the problem and for most useful discussions and comments
\end{acknowledgements}

\begin{figure}[!h]
\includegraphics[width=10.0cm]{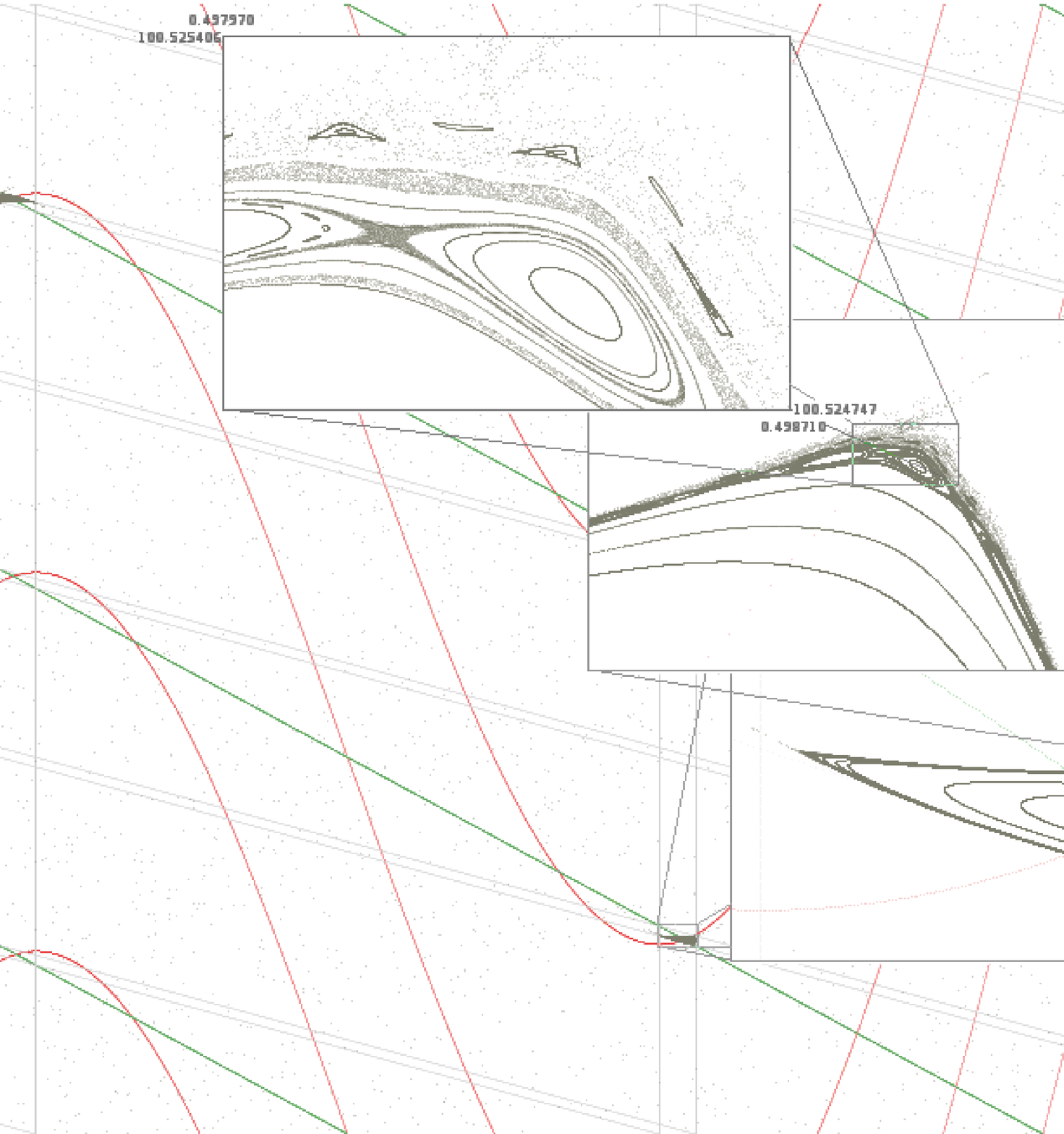}
\caption{Elliptic island of period 2 and type $(+)$; each smaller picture is an enlarged portion of the previous one. In the big picture we can notice the 2-periodic islands (bottom center and upper left) at two suitable intersection points of $\ell$ and $\ell^1$.}\label{imageIslands}
\end{figure}
\end{document}